\algnewcommand{\IIf}[1]{\State\algorithmicif\ #1\ \algorithmicthen}
\algnewcommand{\EndIIf}{\unskip\ \algorithmicend\ \algorithmicif}
\definecolor{mygray}{gray}{0.92}
\newcolumntype{C}[1]{>{\centering\arraybackslash$}p{#1}<{$}}
\newcounter{myequation}[equation]
\theoremstyle{plain}
\newtheorem{theorem}{Theorem}[section]
\newtheorem{proposition}[theorem]{Proposition}
\newtheorem{lemma}[theorem]{Lemma}
\newtheorem{corollary}[theorem]{Corollary}
\theoremstyle{definition}
\newtheorem{definition}[theorem]{Definition}
\theoremstyle{remark}
\newtheorem{remark}[theorem]{Remark}
\newtheorem{example}[theorem]{Example}
\numberwithin{equation}{section}
\def\eps{\varepsilon}
\def\epsilon{\varepsilon}
\def\LG{L}
\def\LK{L^{Kani}}
\DeclareMathOperator{\ad}{adj}
\DeclareMathOperator{\disc}{disc}
\DeclareMathOperator{\End}{End}
\DeclareMathOperator{\Ends}{End^{sym}}
\DeclareMathOperator{\Gal}{Gal}
\DeclareMathOperator{\Gram}{Gram}
\DeclareMathOperator{\Hom}{Hom}
\DeclareMathOperator{\im}{im}
\DeclareMathOperator{\id}{Id}
\DeclareMathOperator{\Jac}{Jac}
\DeclareMathOperator{\NS}{NS}
\DeclareMathOperator{\Pic}{Pic}
\DeclareMathOperator{\res}{res}
\DeclareMathOperator{\Sp}{Sp}
\DeclareMathOperator{\tr}{tr}
\DeclareMathOperator{\trd}{trd}
\DeclareMathOperator{\Tr}{Tr}
\DeclareMathOperator{\rk}{rk}
\newcommand{\dq}{{/\kern -3pt/}}
\newcommand{\wm}{\boldsymbol{\omega}}
\def\C{\mathbb{C}}
\def\F{\mathbb{F}}
\def\N{\mathbb{N}}
\def\P{\mathbb{P}\,}
\def\Q{\mathbb{Q}}
\def\R{\mathbb{R}}
\def\Z{\mathbb{Z}}
\def\frakp{\mathfrak{p}}
\def\Ac{\mathcal{A}}
\def\Bc{\mathcal{B}}
\def\Cc{\mathcal{C}}
\def\Dc{\mathcal{D}}
\def\Ec{\mathcal{E}}
\def\Gc{\mathcal{G}}
\def\Lc{\mathcal{L}}
\def\Oc{\mathcal{O}}
\def\Pc{\mathcal{P}}
\def\Rc{\mathcal{R}}
\def\Vc{\mathcal{V}}
\def\Jv{\underline{J}}
\begin{document}

\title{An arithmetic intersection for squares of elliptic curves with complex multiplication}
\date{\today}

\begin{abstract}
Let $C$ be a genus $2$ curve with Jacobian isomorphic to the square of an elliptic curve with complex multiplication by a maximal order in an imaginary quadratic field of discriminant $-d<0$. We show that if the stable model of $C$ has bad reduction over a prime $p$ then $p \leq d/4$. We give an algorithm to compute the set of such $p$ using the so-called refined Humbert invariant introduced by Kani. Using results from Kudla-Rapoport and the formula of Gross-Keating, we compute for each of these primes $p$ its exponent in the discriminant of the stable model of $C$. We conclude with some explicit computations for $d<100$ and compare our results with an unpublished formula by the third author.
    \end{abstract}


\author[Lorenzo]{Elisa Lorenzo Garc\'ia}
 \address{
    Elisa Lorenzo Garc\'ia,
     Universit\'e de Neuch\^atel, IMA, Emile-Argand 11, 2000, Neuch\^atel,
    Switzerland. %
  }
\address{%
	Elisa Lorenzo Garc\'ia,
   Univ Rennes, CNRS, IRMAR - UMR 6625, F-35000
 Rennes, %
  France. %
}
\email{elisa.lorenzo@unine.ch, elisa.lorenzogarcia@univ-rennes1.fr}

\author[Ritzenthaler]{Christophe Ritzenthaler}
\address{%
	Christophe Ritzenthaler,
  Univ Rennes, CNRS, IRMAR - UMR 6625, F-35000
 Rennes, %
  France. %
  }
  
\address{%
	Christophe Ritzenthaler,
  Université Côte d'Azur, CNRS, LJAD UMR 7351,
  Nice,
  France
}
\email{christophe.ritzenthaler@univ-rennes1.fr}

\author[Rodr\'iguez Villegas]{Fernando Rodr\'iguez Villegas}
\address{%
 Fernando Rodr\'iguez Villegas,
 ICTP,
 Trieste,
 Italy}

\email{villegas@ictp.it}




\subjclass[2010]{11G20, 14Q05, 14D10, 14D20, 14H25}
\keywords{}

\maketitle

\section{Introduction} \label{sec:intro}
The initial motivation of this article was the following formula appearing in \cite{villegas}. The third author defined there a genus $2$ curve 
\begin{align} \label{eq:villegaseq}
C & : y^2 =6^{-3} h_1(x) h_2(x) 
\end{align}
where 
\begin{align*} 
    h_1(x)& = (7144 \sqrt{-163}-151790) x^3+(129789 \sqrt{-163}+1752597)x^2+ \\ &  (-47481\sqrt{-163}+510153)x+(-1596\sqrt{-163}-37250)
\end{align*}
and $h_2(x)=x^3\overline{h(-1/x)}$ where bar denotes the complex conjugation of the coefficients. Its discriminant factors nicely\footnote{As we will see, in order to avoid \textit{ad hoc} factors like $2^{-12}$, one needs to consider not the discriminant of $C$ but an absolute normalized discriminant. See Example~\ref{ex:facdis} for the computation in these terms.} 
as
\begin{equation} \label{eqD} D=2^{-12} \disc(h_1h_2)= (2 \cdot 3^2 \cdot 5 \cdot 7 \cdot 11 \cdot 17 \cdot 19 \cdot 23)^{12}.
\end{equation}
The third author then observed that
\begin{equation} \label{eq:villegas163}
\log D=-6 \sum_{m \in \Z^3} \sum_{n | (163-Q(m))/4} \left(\frac{-163}{n}\right) \log n,    
\end{equation}
where $Q$ is the positive definite quadratic form with matrix $$\begin{pmatrix}
    24 & 4 & 6 \\ 4 & 55 & 1 \\ 6 & 1 & 83 
\end{pmatrix}.$$
The curve $C$ is of course special: its Jacobian is isomorphic to $E^2$, where $E$ is an elliptic curve with complex multiplication by $\Z[(1+\sqrt{-163})/2]$. The factorization of $D$ into a product of small primes is similar to the somehow `complementary' case of simple abelian surfaces with CM studied in full details in \cite{gola,gola2,goreng2,lauterviray0,lauterviray,lauterviray2}. But the formula~\eqref{eq:villegas163} is closer in essence to  the work of \cite{grosszagier} or in a different setting \cite{brunieryang}. 

In the 90', the third author actually developed a strategy that leads to such a formula for $E^2$ where $E$ has CM by a maximal order (see Section~\ref{sec:disc} for the explicit formula and comparison with the present results) but the work remained unfinished. However, the references and ideas it contains allowed us to shape the results in the present article and to propose an algorithm for the factorization of discriminants of genus 2 curves which Jacobian is the power of an elliptic curve with CM by a maximal order.

Let us now rewind and give a more formal introduction. Let $E$ be an elliptic curve with CM by the maximal order $\Oc$ of an imaginary quadratic field $K$ of discriminant $-d<0$. We consider a principally polarised abelian surface $A=E^2$ with an indecomposable polarization $\lambda$, i.e. there does not exists $E_1,E_2$ two elliptic curves such that $(A,\lambda)\simeq (E_1 \times E_2,\lambda_0)$ where $\lambda_0$ is the product polarization. Therefore $(A,\lambda)$ is the Jacobian of a genus 2 curve $C$ and one can prove (see beginning of Section~\ref{sec:algoprime}) that all these objects are defined over the Hilbert class field $H$ of $K$. Even better, given a prime $p \in \N$ and a prime $\frakp$ over $p$ in the ring of integers of $H$, we can construct the stable model of all these objects over its local completion $\Z_{\frakp}$ in 
$H_{\frakp}$. The model $\Ec$ of $E$ is smooth since $E$ has CM and therefore the special fiber $(\bar{E}^2,\bar{\lambda})$ of $(\Ec^2,\lambda)$ is a principally polarised abelian surface. However, when  $\bar{E}$ is supersingular, the polarization $\bar{\lambda}$ may become decomposable or equivalently the special fiber of the stable model of $C$ is singular. This shows the deep connection between decomposability and primes of bad reduction. We will first focus on writing down an algorithm to compute the primes $\frakp$ for which the polarization becomes decomposable. We call such a prime $\frakp$ a \emph{prime of decomposable reduction}.

The classical source  for studying decomposition of abelian surfaces (up to isogeny) over $\C$ is \cite{humbert}. It was revisited and extended to any characteristic in a series of papers by Kani and our main reference will be \cite{kanihumbert}. For any principally abelian surface $(A,\lambda)$,  he defines the so-called \emph{refined Humbert invariant} as an integral positive definite quadratic form $q_{(A,\lambda)}$ on the quotient $\LK:=\Ends(A,\lambda)/\Z \cdot 1_{\End(A)}$ of the symmetric endomorphism of $A$ (i.e. the endomorphisms which are invariant under the Rosati involution associated to $\lambda$). The crucial result for us is Theorem~\ref{th:kaniq}, \cite[Prop.6]{kanihumbert}: $(A,\lambda)$ is decomposable if and only if $q:=q_{(A,\lambda)}$ represents $1$. We give in Section~\ref{sec:kani} a summary of the results we need about this quadratic form which may be a good introduction for people interested in the prolific work of Kani. However, instead of working with $\LK$, we introduce a sublattice $\LG$ of $\Ends(A,\lambda)$, analogue to the Gross lattice for elliptic curve in \cite{grosslattice} such that $(\LK,q)$ and $(\LG,q/4)$ are isometric, but this second space allows a more direct proof of Theorem~\ref{th:kaniq}.

With this form at hand, a first piece of the algorithm becomes straightforward. In Section~\ref{sec:algoprime}, given 
\begin{itemize}
    \item an explicit $E/H$ with CM by $\Oc$,
    \item a indecomposable principal polarization given as $\lambda=\lambda_0 P$ where $P \in M_2(\Oc)$,
    \item a prime $p \in \N$ which is inert or ramified in $\Oc$,
    \item a prime ideal $\frakp \in H$ over $p$,
\end{itemize}  we  compute explicitly $\bar{\LG} \subset \Ends(\bar{E}^2,\lambda_0 P)$ and its Gram matrix with respect to $q/4$. It is then enough to look for the existence of an element of norm $1$ in the 5-rank lattice $\bar{\LG}$  to check whether $\frakp$ is a prime of decomposable reduction. This is summarised in Algorithm~\ref{algo:primereduction}.

The second step is to find a bound on the primes $p \in \N$ over which a decomposable reduction can occur. We use a key argument of \cite{kudla} by looking at the image of our lattice $\bar{\LG}$ into the kernel of the $p$-power Frobenius map on $\bar{E}^2$. Because the dimension drops, on can  show that $p$ has to divide the determinant of the Gram matrix $T$ of a rank-4 lattice generated by the rank-3 Gross lattice $\LG \subset \Ends(E^2,\lambda)$ and a vector of norm 1, necessarily in $\bar{\LG} \setminus \LG$. As one knows the discriminant of the Gram matrix of $\LG$ to be $4d$, one can then bound $p$ by $4d$. Adding some necessary congruence conditions on the remaining entries of the potential matrices $T$, one eventually gets the sharp bound $p \leq d/4$ (Theorem~\ref{th:finitep}).

Combining this two steps, one could then algorithmically obtain all the primes $\frakp$ of decomposable reduction for a given $(E^2,\lambda)$. In Section~\ref{sec:PDR}, we prefer to get a formulation free of the choice of $E$ among all Galois conjugate curves with CM by $\Oc$.  Inside the moduli stack $\Ac_2$ of principally polarised abelian schemes of relative dimension 2, we therefore  look at the $\Z_p$-cycle 
$$\mathcal{CM}(\Oc,P,p) := \bigsqcup (\Ec_{\sigma}^2,\lambda_0 P)$$
    where, for $\sigma \in \Gal(H/K)$, $\Ec_{\sigma}$ is the smooth model of $E^{\sigma}$  over the integer ring $\Z_{\frakp}$ of $H_{\frakp}$.    
 If we denote by $\mathcal{G}_1 \subset \Ac_2$ the locus of decomposable  abelian schemes, one can then define a proper arithmetic intersection between $\Gc_1$ and $\mathcal{CM}(\Oc,P,p)$  and therefore a multiplicity of intersection which we denote $e_{\Oc,P,p}$. This definition does not depend on the choice of $\frakp | p$, and we say that $p$ is a prime of \emph{potential decomposable reduction (PDR)} when $e_{\Oc,P,p}>0$. In Section~\ref{sec:support}, we first  work out the support of this intersection purely in terms of $P$ and the embeddings of $\End(E^{\sigma})$ into $\End(\overline{E^{\sigma}})$ for each $\sigma \in \Gal(H/K)$. Using results from \cite{colo} recently developed for post-quantum isogeny-based cryptography, one can actually run among these embeddings by weighing the embeddings of $\Oc$ into the conjugacy classes of maximal orders $\Rc$  in the definite quaternion algebra $\Bc_{p,\infty}$. The weight depends on $p$ being inert or ramified (with the special case $p=d/4$) and the two-side ideal $\Pc \subset \Rc$ over $p$ being principal or not (Lemma~\ref{lem:emb}). For each of these embeddings, one can then construct the corresponding $\bar{L}$  and test for the existence of $\ell_0 \in \bar{\LG} \setminus \LG$ such that $q/4(\ell_0)=1$. Moreover, the local analysis of \cite{kudla} applies and allows us to compute the intersection multiplicity $e_{\Oc,P,p}$ by adding the local contributions $e_{\ell_0}$ given by the Gross-Keating invariants of the Gram matrix associated to the 3-rank lattice $\ell_0^{\perp} \subset L \oplus \ell_0$ (Theorem~\ref{th:mult}). Algorithm~\ref{algo:PDR} summarizes these various results and is implemented in \texttt{Magma}.

Section~\ref{sec:disc} closes the loop and connect the computation of the primes $p$ of PDR and their intersection multiplicity $e_{\Oc,P,p}$  to the factorization of the discriminant of the curve $C$ whose Jacobian is one of the  Galois conjugates of $(E^2,\lambda_0 P)$. More precisely, for each prime $p$ of PDR and $\frakp \mid \ p$ in $H$, one can consider the stable model $\Cc$ of $C$ over $\Z_{\frakp}$. Igusa shows (see also Liu \cite[Th.1]{liug2}) that the special fiber of $\Cc$ being smooth or singular  can be characterised in terms of the valuations of the Igusa invariants $\Jv=(J_2,J_4,J_6,J_8,J_{10})$ of $C$ (recall that $J_{10}$ is the discriminant of $C$). More precisely, if we defined the \emph{normalised valuation of $J_{10}$ at $\frakp$} by $$v_{\Jv}(J_{10})= v_{\frakp}(J_{10})-10 \cdot \min\left(\frac{v_{\frakp}(J_{n})}{n}, \, n \in \{2,4,6,8,10\}\right),$$
we get that $\frakp$ is a prime of decomposable reduction if and only if $v_{\Jv}(J_{10}) > 0$. Proposition~\ref{prop:siegel} uses comparison formulas between Siegel modular forms and their expressions in terms of Igusa invariants to relate the local contribution $e_{\ell_0}$ at the level of the Jacobian to the normalized valuation of $J_{10}$ of the corresponding curve, exhibiting a factor 12 between the two. Finally, since moving in the Galois orbit of $E$ is equivalent to moving once (resp. twice) over the primes $\frakp$ over $p$ when $p$ is inert in $\Oc$ or $p=d/4$ (resp. ramified and different from $d/4$), we get that the \emph{exponent of the absolute normalised discriminant of $C$ at $p$}, $\sum_{\frakp | p} v_{\Jv}(J_{10})$, is equal to  $12  \cdot e_{\Oc,P,p}$ (resp. $\sum_{\frakp | p} v_{\Jv}(I_{10}) = 6  \cdot e_{\Oc,P,p}$), see Theorem~\ref{th:disc}. 

Section~\ref{sec:datum} offers experimental data for all $d<100$. Using the numerical algorithms \cite{sijsling-endo}, we compute uncertified equations of the curves $C$. For each of them, we could verify the equality of the exponent of their absolute normalised discriminant at $p$ with 6 (or 12) times $e_{\Oc,P,p}$. We also intend to compare the experimental results with the third author's unpublished formula. We could check that it shows very accurate predictions when $d$ is a prime. This is therefore a great motivation to come back to this incredible close formula in a future work. Then, it will hopefully be possible to accomplish the third author's initial dream and Kudla's program and relate it to the Fourier coeﬃcients of certain Siegel-Eisenstein series. The formula \cite[Th.8.4]{cho} seems to be a step in this direction.

How does our work compare to \cite{kudla}?  It seems to us that they consider a slightly different arithmetic intersection problem. Instead of looking at the cycle of $(E^2,\lambda)$ for a given $\Oc$ and $\lambda$, they consider the arithmetic intersection of $\mathcal{G}_1$ (for instance) with the set of points in the moduli space with a given $3 \times 3$ Gram matrix $Q$ for the lattice of special endomorphisms of one $(E^2,\lambda)$ with respect to $q/4$. The special endomorphisms are the elements of trace $0$ inside $\Ends(E^2,\lambda)$ and this lattice contains $\LG$. Our cycle $\mathcal{CM}(\Oc,P,p)$ is therefore a sub-cycle of the one they consider. Looking at formula (3.6) of \textit{loc. cit.}, one may get the impression that this is much more natural as all extensions of $Q$ to $4\times 4$ positive definite matrices $T$ with the last entry equal to 1 give an intersection point. However, this first impression is not  correct as for instance for the second polarization in the case $d=163$, there is a matrix $T$ of determinant $18$ with Gross-Keating invariants $[0,0,2]$ at $p=3$ and hence the formula  \eqref{eq:GK} does not apply. Our algorithms would allow to compute the exact support of the intersection in their scenario as well.

In our case as well, the set of matrices $T$ has still some mysteries and one final observation in Section~\ref{sec:observations} confirms that we just scratch the surface of this problem. Indeed, in all our experiments, the inert primes $p$ of PDR seem to be the inert primes dividing the determinants of the positive definite matrices $T$  constructed from the Gram matrice of $L$ and $1$ as above and satisfying the congruence conditions \eqref{eq:sieve}. We do not know how to prove this.

Finally, there are many possible  extensions of that work. First, note that it may be feasible using the same techniques to handle the case of non-maximal orders, the case where $\Jac(C) \sim E^2$ or to look at the similar problem for $g=3$ (see Remark~\ref{rem:g3} and Remark~\ref{rem:boundg3}).

\subsection*{Acknowledgements} The research of the first author is partially funded by the
Melodia ANR-20-CE40-0013 project. The research of both authors is partially funded by the 2023-08 Germaine de Staël project. 

We would like to thank Jeroen Sijsling and Edgar Costa for their kind assistance with their \texttt{Magma} packages. We also thank Everett Howe for the quick fix on the enumeration of polarizations. We are very grateful to David Kohel for his patient explanation on orientations on supersingular elliptic curves.

\subsection*{Convention and notation} All objects and morphisms are relative to a base ring. For instance, if $A$ is defined over a perfect field $k$, $\End(A)$ refers to the endomorphisms of $A$ over $k$. The adjective `geometric' will be added to refer to the ones over $\bar{k}$.
\begin{itemize}
    \item $\Oc=\Z[\omega]$ is the maximal order of an imaginary  quadratic field $K$ of discriminant $-d<0$.
    \item $\Rc$ is a maximal order of the definite quaternion algebra $\Bc_{p,\infty}$ of discriminant $p$. We denote by $\Pc$ the two-sided prime ideal of $\Rc$ above $p$.
    \item $H$ is the Hilbert class field of $K$ and for a prime $p \in \N$ and a prime ideal $\frakp | p$ in $H$, we denote $H_{\frakp}$ the completion at $\frakp$, $\Z_{\frakp}$ its ring of integers with valuation $v$ and $k_{\frakp}$ its residue field.
    \item For a principally polarized abelian variety $(A,\lambda)$ of dimension $g$, we denote $\Ends(A,\lambda)=\{\alpha \in \End(A),  \, \alpha=\alpha^{\dag}\}$ where $\dag$ is the Rosati involution associated to $\lambda$. We denote $\LK=\Ends(A,\lambda)/\Z \cdot 1_{\End(A)}$ and $\LG=\tau(\Ends(A,\lambda))$ where  $\tau(\alpha)=g\alpha-\Tr(\alpha)/2$.
    \item For an elliptic curve $E$ with CM by $\Oc$, $\lambda_0$ is the product principal polarization on $E^2$ and for a principal polarization $\lambda$, we let
    $\lambda_0^{-1} \lambda=P \in M_2(\Oc)$ the associated hermitian matrix.
    \end{itemize} 

\section{Quadratic forms on symmetric endomorphisms} \label{sec:kani}
All the material in this section is classical or covered by Kani in his various articles, in particular in \cite{kanihumbert, KaniII, KaniI}. However the presentation and some proofs are original.

\subsection{Refined Humbert invariant}
Let $(A,\lambda)$ be a principally polarised abelian variety of dimension $g$ over a perfect field $k$. The \emph{Rosati involution} (with respect to $\lambda$) is defined on $\End(A)$ by the involution $\alpha \mapsto \alpha^{\dag} = \lambda^{-1} \hat{\alpha} \lambda$.  We will denote $\Ends(A,\lambda)=\{\alpha \in \End(A),  \, \alpha=\alpha^{\dag}\}$. 
To any $\alpha \in \End(A)$, one can associate a degree $2g$ polynomial $P_{\alpha}\in \Z[x]$ defined for all $n \in \Z$ by $P_{\alpha}(n)=\deg(\alpha-n)$. By \cite[Prop.12.9, p.125]{cornell}, it is also the characteristic polynomial of $\alpha$ acting on $T_{\ell}(A) \otimes \Q$ for all $\ell$ different from the characteristic of $k$ and if $k \subset \C$, it is also the characteristic polynomial of the rational representation of $\alpha$. The \emph{trace} of $\alpha$ on this space does not depend on $\ell$, belongs to $\Z$ and will be denoted $\Tr(\alpha)$.

By \cite[Th.17.3, p.138]{cornell}, the bilinear form $(\alpha,\beta) \mapsto \Tr(\alpha \beta^{\dag})$ on $\End(A)$ is positive definite and restrict to a positive definite quadratic form $\Ends(A,\lambda) \to \Z$ by $\alpha \mapsto \Tr(\alpha^2)$. Similarly,
Kani introduced another quadratic form on $\Ends(A,\lambda)$:
\begin{definition}
 One calls     
 \begin{eqnarray*}
q_{(A,\lambda)} : & \Ends(A,\lambda) & \to \Z \\
      & \alpha & \mapsto \frac{1}{4} (2g \Tr(\alpha^2) - \Tr(\alpha)^2)
 \end{eqnarray*}
 the \emph{refined Humbert invariant}.
\end{definition}

\begin{remark}
The fact that $q_{(A,\lambda)}$ has values in $\Z$ boils down to the fact that $\Tr(\alpha)$ is even for symmetric endomorphisms    (see Remark~\ref{rem:integral}).
\end{remark}

Note that for any integer $m$, one has $q_{(A,\lambda)}(\alpha+m)=q_{(A,\lambda)}(\alpha)$ so that this form is well defined on the quotient $$\LK:=\Ends(A,\lambda)/\Z \cdot 1_{\End(A)}.$$ Moreover letting $\alpha'=2g \alpha-\Tr(\alpha) \cdot 1_{\End(A)}$, we see that $4g^2 \cdot q_{(A,\lambda)}(\alpha)= q_{(A,\lambda)}(\alpha') = g/2 \cdot \Tr({\alpha'}^2) \geq 0$, so the form $q_{(A,\lambda)}$ is a positive definite form on $\LK$.

\subsection{Gross lattice}
Instead of working on this quotient lattice, one can introduce another lattice 
$$\LG:=\tau(\Ends(A,\lambda)) \; \textrm{where }  \tau(\alpha)=g\alpha-\Tr(\alpha)/2,$$ which we call the \emph{Gross lattice} by analogy with the genus 1 case \cite{grosslattice}. Clearly $\Tr(\tau(\alpha))=0$ so $\LG \subset L_0:=\{\alpha \in \Ends(A,\lambda), \, \Tr(\alpha)=0\}$, the latter being called in the literature the space of \emph{special endomorphisms}. The kernel of $\tau$ is $\Z\cdot 1_{\End(A)}$ so for $\beta \in \LG$, the image in $\LK$ of $\alpha=\tau^{-1}(\beta)$ is well defined and realises an isomorphism between $\LG$ and $\LK$.  We therefore have the following relation 
\begin{center}
\begin{tikzcd}
\End(A)  \arrow[hookleftarrow]{r}
  & \Ends(A,\lambda) \arrow[hookleftarrow]{r}  \arrow[twoheadrightarrow]{d} \arrow[rr, bend left=20, "\tau"]
  & L_0 \arrow[hookleftarrow]{r}
  & \LG \arrow[lld, dotted, "iso", description]
    \\
  & \LK   &    &  
\end{tikzcd}
\end{center}
Moreover $$q_{(A,\lambda)}(\beta)= \frac{g}{2} \Tr(\beta^2) = g^2 q_{(A,\lambda)}(\alpha).$$
Hence, the lattice $(\LG,\frac{1}{g^2} \cdot q_{(A,\lambda)})$ and $(\LK,q_{(A,\lambda)})$ are isometric and in particular their Gram matrices are equivalent.

\subsection{The case $g=2$}
From now on we restrict to $g=2$ so that $q_{(A,\lambda)}(\alpha) = \Tr(\alpha^2)-\frac{1}{4} \Tr(\alpha)^2$. We let $q:=q_{(A,\lambda)}$ when there is no ambiguity and let $$b_q: (\alpha_1,\alpha_2) \mapsto \frac{1}{2} \left( q(\alpha_1+\alpha_2)-q(\alpha_1)-q(\alpha_2)\right)$$ be the bilinear form associated to $q$.
We want to prove several integrality and congruence results on $q$ and $b_q$. In order to do this, we will use Kani's expression of $q$ in terms of intersection of divisors.

One embeds the N\'eron-Severi group $\NS(A)$ in $\Ends(A,\lambda)$ by sending a divisor $D$ to $\alpha={\lambda}^{-1} \phi_D$, where $\phi_D(x) = t_x^* \Lc_D \otimes \Lc_D^{-1}$. Over $\bar{k}$, the map is surjective and in particular the identity of $\Ends(A,\lambda)$ corresponds to an effective divisor $\theta$ such that $\phi_{\theta}=\lambda$.  If $\alpha \in \Ends(A)$ corresponds to $D$, then the degree of $\alpha$ is equal to the degree of $\phi_D$  since ${\lambda}$ is an isomorphism. By the Geometric Riemann-Roch theorem, one has $\deg \alpha = (\frac{1}{2!} (D.D))^2$. In particular $(\theta.\theta)=2$.
One rewrites $q$  in terms of intersection of divisors on $A$ as in \cite{kanihumbert}:
\begin{proposition} \label{prop:comparison}
For $D \in \NS(A)$, let $\alpha= \lambda^{-1} \phi_D \in \Ends(A,\lambda)$. Then
\begin{equation} \label{eq:divisor}
q(\alpha) =(D.\theta)^2-2(D.D).    
\end{equation}
\end{proposition}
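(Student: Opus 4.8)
The plan is to determine the characteristic polynomial $P_\alpha$ explicitly as a polynomial whose coefficients are expressed through the two intersection numbers $(D.D)$ and $(D.\theta)$, and then to extract $\Tr(\alpha)$ and $\Tr(\alpha^2)$ from those coefficients.

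First I would use that $\lambda=\phi_\theta$ is principal, so $\deg\lambda=1$, together with the additivity of $D\mapsto\phi_D$, to rewrite, for every $n\in\Z$,
\[
P_\alpha(n)=\deg(\alpha-n)=\deg\bigl(\lambda^{-1}(\phi_D-n\phi_\theta)\bigr)=\deg\phi_{D-n\theta}.
\]
Applying the Geometric Riemann--Roch formula (the same one invoked just above to get $(\theta.\theta)=2$) gives $\deg\phi_{D-n\theta}=\bigl(\tfrac12\bigl((D-n\theta).(D-n\theta)\bigr)\bigr)^2$; expanding the quadratic form and using $(\theta.\theta)=2$ yields
\[
P_\alpha(n)=\Bigl(n^2-(D.\theta)\,n+\tfrac12(D.D)\Bigr)^2\qquad\text{for all }n\in\Z.
\]
Since $P_\alpha$ is monic of degree $2g=4$ and the right-hand side is a monic degree-$4$ polynomial in $n$, this is an identity of polynomials.

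Next, writing $P_\alpha(x)=x^4-c_1x^3+c_2x^2-c_3x+c_4$, the identity above gives $c_1=2(D.\theta)$ and $c_2=(D.\theta)^2+(D.D)$. Because $P_\alpha$ is the characteristic polynomial of $\alpha$ acting on $T_\ell(A)\otimes\Q$, we have $c_1=\Tr(\alpha)$ and, by Newton's identity, $\Tr(\alpha^2)=c_1^2-2c_2=2(D.\theta)^2-2(D.D)$. Substituting into $q(\alpha)=\Tr(\alpha^2)-\tfrac14\Tr(\alpha)^2$ gives $q(\alpha)=2(D.\theta)^2-2(D.D)-(D.\theta)^2=(D.\theta)^2-2(D.D)$, which is the claimed formula. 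As a consistency check, $c_1=2(D.\theta)$ is even and $c_4=\tfrac14(D.D)^2=\bigl(\tfrac12(D.D)\bigr)^2\in\Z$, recovering $P_\alpha\in\Z[x]$ and the parity of $\Tr(\alpha)$.

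This is essentially a bookkeeping computation, so I do not expect a genuinely hard step; the only points that need a little care are the passage from the numerical identity valid for all $n\in\Z$ to an equality of polynomials — immediate once one knows $\deg P_\alpha=2g=4$ — and checking that the normalisations of $\phi_D$ and of Geometric Riemann--Roch are the ones for which $\deg\phi_\theta=1$ and $(\theta.\theta)=2$ hold simultaneously, which is exactly the setup recalled just before the statement.
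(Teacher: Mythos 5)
Your proof is correct, and it takes a genuinely different route from the paper's. The paper proves the proposition through Lemma~\ref{lem:repalpha} (a normal form for the matrix of $\alpha$ on $T_\ell(A)$ with respect to a symplectic basis) and Lemma~\ref{lem:atoD} (computation of $\Tr(\alpha)$ and $\Tr(\alpha^2)$ by adapting the wedge-product argument of Milne's Theorem 17.3, evaluating $e_\ell^\theta\wedge e_\ell^D$ and $e_\ell^D\wedge e_\ell^D$ against a canonical generator of $\Hom(\Lambda^4 T_\ell(A),\Z_\ell(2))$). Your approach sidesteps the Tate module representation entirely: you identify $P_\alpha(n)=\deg(\alpha-n)=\deg\phi_{D-n\theta}$, invoke the same Geometric Riemann--Roch computation the paper already uses to get $(\theta.\theta)=2$, expand the intersection form, and read off $\Tr(\alpha)$ and $\Tr(\alpha^2)$ from the coefficients of the resulting polynomial identity $P_\alpha(x)=\bigl(x^2-(D.\theta)x+\tfrac12(D.D)\bigr)^2$. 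This is shorter and more self-contained for the purpose of the proposition alone, and as a bonus you observe that $P_\alpha$ is a perfect square. The trade-off is that the paper's Lemma~\ref{lem:repalpha} is also reused elsewhere (Remark~\ref{rem:minpol}, the proof of Theorem~\ref{th:kaniq}, Remark~\ref{rem:integral}), so the authors get those structural facts from the same machinery, whereas your argument is specialised to extracting the two intersection numbers. One small remark on your consistency check: integrality of $c_4=\tfrac14(D.D)^2$ and $c_3=(D.\theta)(D.D)$ relies on $(D.D)$ being even; this is automatic because on an abelian surface $(D.D)=2\chi(\Lc_D)$ (it is noted right after the proposition in the paper in the form $(D.D)=\pm2\sqrt{\deg\alpha}$), so the check does go through.
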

The expression \eqref{eq:divisor} is what Kani actually calls the \emph{refined Humbert invariant}.
This equality follows from the following lemmas.

\begin{lemma} \label{lem:repalpha}
Let $(A,\lambda)$ be a principally polarised abelian variety over $k$ of dimension $g$ and  $\alpha \in \Ends(A,\lambda)$. Let $\ell$ be a prime different from the characteristic of $k$. There exists a basis of $T_{\ell}(A)$ such that the matrix of $\alpha$ acting on $T_{\ell}(A)$ in this base can be written 
\begin{equation} \label{eq:matM}
    M_{\alpha}=\begin{pmatrix}
\alpha_1 \id_2 & M_{12} & \ldots & M_{1g} \\
\ad(M_{12}) & \ddots & \cdots & M_{2g} \\
\vdots & \ddots  & \ddots & \vdots \\
\ad(M_{1g}) & \cdots & & \alpha_g \id_2 
\end{pmatrix}
\end{equation}
where $\alpha_1,\dots,\alpha_g \in \Z_{\ell}$ and $M_{ij} \in M_2(\Z_{\ell})$ for $i<j$ and $\ad()$ is the adjucate operator.
\end{lemma}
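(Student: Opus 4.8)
The plan is to reduce the statement to a piece of symplectic linear algebra over $\Z_\ell$. Since $\ell\neq\chr(k)$, the Tate module $V:=T_\ell(A)$ is a free $\Z_\ell$-module of rank $2g$, and the polarisation $\lambda$ equips it with the $\ell$-adic Weil pairing $\psi\colon V\times V\to\Z_\ell$, an alternating bilinear form which is \emph{perfect} precisely because $\lambda$ is a principal polarisation (it is obtained from the perfect canonical pairing $T_\ell(A)\times T_\ell(\hat A)\to\Z_\ell$ by transporting along the isomorphism $\lambda\colon A\xrightarrow{\sim}\hat A$). The key input is the standard compatibility between the Rosati involution and this pairing: the endomorphism $\alpha^{\dag}=\lambda^{-1}\hat\alpha\lambda$ is exactly the adjoint of $\alpha$ with respect to $\psi$ (see e.g.\ \cite{cornell}). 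Hence $\alpha\in\Ends(A,\lambda)$ if and only if $\psi(\alpha x,y)=\psi(x,\alpha y)$ for all $x,y\in V$, i.e.\ $\alpha$ is $\psi$-self-adjoint, and the lemma is reduced to the following assertion: a $\psi$-self-adjoint endomorphism of a rank-$2g$ unimodular symplectic $\Z_\ell$-module admits a matrix of the form \eqref{eq:matM} in a suitable basis.

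Next I would fix a symplectic basis $e_1,f_1,\dots,e_g,f_g$ of $(V,\psi)$; such a basis exists over the local ring $\Z_\ell$ by the usual argument (unimodularity of $\psi$ lets one choose $f_1$ with $\psi(e_1,f_1)=1$ for any primitive $e_1$, the hyperbolic plane $\Z_\ell e_1\oplus\Z_\ell f_1$ is then a unimodular orthogonal summand, and one induces on its orthogonal complement). Ordering the basis as $(e_1,f_1,\dots,e_g,f_g)$, the Gram matrix of $\psi$ is the block-diagonal matrix $J=\mathrm{diag}(J_2,\dots,J_2)$ with $J_2=\left(\begin{smallmatrix}0&1\\-1&0\end{smallmatrix}\right)$. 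Let $M$ be the matrix of $\alpha$ in this basis, with entries in $\Z_\ell$ since $\alpha$ stabilises $V$, and decompose it into $2\times2$ blocks $M_{ij}$. The self-adjointness condition $\psi(\alpha x,y)=\psi(x,\alpha y)$ is equivalent to $M^{\top}J=JM$, which block by block reads $M_{ji}^{\top}J_2=J_2M_{ij}$ for all $i,j$.

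It then remains to exploit the elementary identity $\ad(S)=-J_2S^{\top}J_2$, valid for every $2\times2$ matrix $S$ (equivalently $\ad(S)=J_2^{-1}S^{\top}J_2$, since $J_2^{-1}=-J_2$); it is checked at once on a generic $2\times2$ matrix. Substituting it into $M_{ji}^{\top}J_2=J_2M_{ij}$ yields $M_{ji}=-J_2M_{ij}^{\top}J_2=\ad(M_{ij})$ for all $i,j$. In particular $M_{ii}=\ad(M_{ii})$; writing $M_{ii}=\left(\begin{smallmatrix}a&b\\c&d\end{smallmatrix}\right)$ this forces $a=d$ and $2b=2c=0$, hence $b=c=0$ because $\Z_\ell$ is torsion-free, so $M_{ii}=\alpha_i\id_2$ with $\alpha_i\in\Z_\ell$. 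For $i<j$ the block $M_{ij}$ is an arbitrary element of $M_2(\Z_\ell)$ and the block in position $(j,i)$ equals $\ad(M_{ij})$, which is precisely the shape displayed in \eqref{eq:matM}.

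I do not anticipate a genuine obstacle: the argument is a translation into symplectic linear algebra followed by one $2\times2$ computation. The only points deserving attention are getting the compatibility between the Rosati involution and the Weil pairing with the correct side and sign (so that self-adjointness comes out as $M^{\top}J=JM$ rather than a twisted version), and checking that $\ell=2$ is not exceptional — it is not, since all the vanishing deductions ``$2x=0\Rightarrow x=0$'' take place in the torsion-free ring $\Z_\ell$.
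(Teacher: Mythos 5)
Your proof is correct and follows the same route as the paper: fix a symplectic basis for the perfect $\ell$-adic Weil pairing, translate the Rosati-symmetry of $\alpha$ into self-adjointness of its matrix with respect to the block-diagonal symplectic form, and read off the block relation $M_{ji}=\ad(M_{ij})$. Your version is slightly more careful than the paper's — you justify the existence of a symplectic basis over $\Z_\ell$ and note that $\Z_\ell$ being torsion-free handles $\ell=2$ — but the substance is identical.
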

\begin{proof}
Let $e_{\ell}^\theta : T_{\ell}(A) \times T_{\ell}(\hat{A}) \to \Z_{\ell}(1)$ be the Weil pairing relative to the polarization $\lambda$ associated to $\theta$, i.e. $e_{\ell}^{\theta}(x,y)= e_{\ell}(x,\lambda(y))$.  
Fix a basis $a_1,\ldots,a_{2g}$  of 
$T_{\ell}(A)$ such that $e_{\ell}^{\theta}$ is given by the diagonal block matrix $J,\cdots,J$ where 
$$
J=\begin{pmatrix}0 & 1 \\ -1 & 0 \end{pmatrix}.
$$
Write the matrix of $\alpha$ acting on $T_{\ell}(A)$ with $2\times2$-blocks as
$$M_{\alpha}=\begin{pmatrix}
M_{11} & M_{12} & \ldots & M_{1g} \\
M_{21} & \ddots & \cdots & M_{2g} \\
\vdots & \ddots  & \ddots & \vdots \\
M_{g1} & \cdots & \cdots & M_{gg} 
\end{pmatrix}.$$
Since $\alpha$ is a symmetric endomorphism for the polarization $\lambda$,  the following holds: $$e_\ell^\theta(a_i,\alpha(a_j))=e_\ell^\theta(\alpha^\dag(a_i),a_j)=e_\ell^\theta(\alpha(a_i),a_j)=-e_\ell^\theta(a_j,\alpha(a_i)).$$
This equality
implies $JM_{ij}=-M_{ji}J$. Since they are $2\times2$-matrices, this implies $M_{ji}=\ad(M_{ij})$, which, in addition, for $i=j$, implies that $M_{ii}$ are multiples of the identity.

\end{proof}

\begin{lemma} \label{lem:atoD}
Let $(A,\lambda)$ be a principally polarised abelian surface. For any $\alpha \in \Ends(A,\lambda)$  with associated divisor $D$ we have that $\Tr(\alpha) = 2 (\theta.D)$ and $\frac{\Tr(\alpha)^2}{2}- \Tr(\alpha^2)= 2 (D.D)$.
\end{lemma}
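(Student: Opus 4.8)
The plan is to compute the full characteristic polynomial $P_\alpha$ of $\alpha$ acting on $T_\ell(A)\otimes\Q$ in closed form in terms of the two intersection numbers $(D.\theta)$ and $(D.D)$, and then extract $\Tr(\alpha)$ as (minus) the coefficient of $x^3$ and $\Tr(\alpha^2)$ from the first two Newton identities.

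First I would use the defining property $P_\alpha(n) = \deg(\alpha - n)$ for all $n\in\Z$. Writing $\alpha = \lambda^{-1}\phi_D$ and using that $D \mapsto \phi_D$ is additive $\NS(A)\to\Hom(A,\hat A)$ together with $\phi_\theta = \lambda$, one has $\alpha - n = \lambda^{-1}\phi_{D-n\theta}$, so $\deg(\alpha - n) = \deg(\phi_{D-n\theta})$ because $\lambda$ is an isomorphism. Geometric Riemann–Roch on the abelian surface $A$ gives $\deg\phi_{D'} = \chi(\Lc_{D'})^2 = \big(\tfrac12(D'.D')\big)^2$; applying it to $D' = D - n\theta$ and expanding with $(\theta.\theta) = 2$ would give
\[
P_\alpha(n) = \Big(n^2 - n(D.\theta) + \tfrac12(D.D)\Big)^2 \qquad \text{for all } n\in\Z,
\]
whence, comparing two polynomials that agree at infinitely many points, $P_\alpha(x) = \big(x^2 - (D.\theta)x + \tfrac12(D.D)\big)^2$ as an identity in $\Q[x]$.

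From here everything is formal. Setting $b = (D.\theta)$, $c = \tfrac12(D.D)$ and expanding $\big(x^2 - bx + c\big)^2 = x^4 - 2bx^3 + (b^2+2c)x^2 - 2bcx + c^2$, I would read off $\Tr(\alpha) = 2b = 2(D.\theta)$ from the $x^3$-coefficient, giving the first claim, and $e_2 := \sum_{i<j}\lambda_i\lambda_j = b^2 + 2c$ from the $x^2$-coefficient; then Newton's identity $\Tr(\alpha^2) = \Tr(\alpha)^2 - 2e_2 = 4b^2 - 2b^2 - 4c = 2b^2 - 4c$ yields $\tfrac12\Tr(\alpha)^2 - \Tr(\alpha^2) = 2b^2 - (2b^2 - 4c) = 4c = 2(D.D)$, the second claim.

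The only point requiring care is the passage $\deg(\alpha - n) = \deg\phi_{D-n\theta}$ and the validity of $\deg\phi_{\Lc} = \chi(\Lc)^2$ for the possibly non-effective, non-ample class $D - n\theta$: I would handle this by recalling that this identity holds for every line bundle on an abelian variety (with $\deg\phi_{\Lc}:=0$ when $\phi_{\Lc}$ fails to be an isogeny), or — to sidestep it entirely — by noting it suffices to establish the displayed identity for the infinitely many integers $n$ with $D - n\theta$ ample and then invoke polynomial equality. Everything else (bilinearity of the intersection pairing, $(\theta.\theta)=2$ coming from $\deg\lambda = 1$, the Newton identities) is routine.
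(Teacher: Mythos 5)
Your proof is correct, and it takes a genuinely different and somewhat cleaner route than the one in the paper. The paper's proof works at the level of the $\ell$-adic Tate module: it fixes a symplectic basis for the Weil pairing $e_\ell^\theta$, writes out the explicit $4\times 4$ matrix of $\alpha$ coming from Lemma~\ref{lem:repalpha}, and then adapts the argument in Milne's Theorem~17.3 by computing the wedge products $e_\ell^\theta\wedge e_\ell^D$ and $e_\ell^D\wedge e_\ell^D$ against a normalizing generator $\epsilon$ with the property $e_\ell^{D_1}\wedge e_\ell^{D_2}=(D_1.D_2)\cdot\epsilon$. This is a direct, entirely hands-on computation, and its byproduct (the explicit antisymmetric matrix of $e_\ell^D$) is in the same spirit as the rest of Section~\ref{sec:kani}. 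Your approach instead works globally with the characteristic polynomial: you use only the defining identity $P_\alpha(n)=\deg(\alpha-n)$, the additivity of $D\mapsto\phi_D$ together with $\phi_\theta=\lambda$, and the Riemann--Roch relation $\deg\phi_{D'}=\bigl(\tfrac12(D'.D')\bigr)^2$, then conclude by polynomial interpolation and Newton's identities. This is shorter, avoids the choice of a symplectic basis entirely, and makes it transparent that $P_\alpha$ is the square of the integral quadratic $x^2-(D.\theta)x+\tfrac12(D.D)$ (which also recovers the observation in Remark~\ref{rem:minpol} for free). Two small points of care, both of which you flag: the identity $\deg\phi_L=\chi(L)^2$ is indeed valid for \emph{every} line bundle on an abelian variety (with $\deg\phi_L:=0$ when $\phi_L$ is not an isogeny), so the interpolation workaround is available but not actually needed; and the sign ambiguity in $\chi(L)=\pm\tfrac12(D'.D')$ is irrelevant because only the square enters. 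Both proofs are sound; yours buys brevity and conceptual clarity, while the paper's buys explicitness that feeds the surrounding lemmas.
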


\begin{proof}
We resume the notation of Lemma~\ref{lem:repalpha} in the case  $g=2$ and write 
\begin{equation} \label{eq:reprag2}
M_{\alpha}=\begin{pmatrix}\alpha_{11} & 0& \alpha_{13} & \alpha_{14}\\ 0 & \alpha_{11} & \alpha_{23} & \alpha_{24}\\ \alpha_{24} & -\alpha_{14} & \alpha_{33} & 0\\ -\alpha_{23} & \alpha_{13} & 0 & \alpha_{33}\end{pmatrix}.
\end{equation}
We adapt the proof of  \cite[Th.17.3]{milne}. By Lemma~17.4 in \textit{loc. cit.}, there exists a canonical generator $\epsilon$ of $\Hom(\Lambda^4(T_{\ell}(A)),\Z_{\ell}(2))$ such that for any divisors $D_1,D_2$ one has $e_{\ell}^{D_1} \wedge e_{\ell}^{D_2} = (D_1.D_2) \cdot \epsilon$ where $e_{\ell}^{D_i}(x,y) = e_{\ell}(x, \phi_{D_i} y)$.
We apply this lemma to $D_1=\theta$ and $D_2=D$ and then to $D_1=D$ and $D_2=D$. 

We can compute the following expression using that the matrices of $e_{\ell}^{\theta}$ and $e_{\ell}^D$ in the chosen basis $(a_1,\ldots,a_4)$ are respectively 
$$\begin{pmatrix}
    J & 0 \\ 0 & J
\end{pmatrix} \, \textrm{and} \begin{pmatrix}
    0 & -\alpha_{11} & -\alpha_{23} & -\alpha_{24} \\
    \alpha_{11} & 0 & \alpha_{13} & \alpha_{14} \\
    \alpha_{23} & \alpha_{13} & 0 & -\alpha_{33} \\
    \alpha_{24} & \alpha_{14} & \alpha_{33} & 0 
\end{pmatrix}.$$
We get
\begin{eqnarray*}
&(e_\ell^\theta\wedge e_\ell^\theta)(a_1\wedge...\wedge a_4)=\frac{1}{24}\sum_{\sigma\in S_4} \textrm{sg}(\sigma) \cdot e_\ell^\theta(a_{\sigma(1)}, a_{\sigma(2)}) \cdot e_\ell^\theta(a_{\sigma(3)}, a_{\sigma(4)})=\\
&\frac{1}{24}\left(8e_\ell^\theta(a_1, a_2)e_\ell^\theta(a_3, a_4)-8e_\ell^\theta(a_1, a_3)e_\ell^\theta(a_2, a_4)+8e_\ell^\theta(a_1, a_4)e_\ell^\theta(a_2, a_3)\right)=\\
&\frac{1}{3}e_\ell^\theta(a_1, a_2)e_\ell^\theta(a_3, a_4)=-\frac{1}{3}=(\theta.\theta) \cdot \epsilon(a_1\wedge ...\wedge a_4).
\end{eqnarray*}
Since $(\theta.\theta)=2$, we get $\epsilon(a_1\wedge ...\wedge a_4)=-1/6$. We can now compute
\begin{eqnarray*}
-(\theta.D)/6=(e_\ell^\theta\wedge e_\ell^D)(a_1\wedge...\wedge a_4)=\\
\frac{1}{24}\left(4e_\ell^\theta(a_1,a_2)e_\ell^D(a_3, a_4)+4e_\ell^\theta(a_3, a_4)e_\ell^D(a_1, a_2)\right)=\\
\frac{1}{12}\left(2 e_{\ell}^D(a_1,a_2) -2 e_{\ell}^D(a_3,a_4)\right)=-\frac{1}{12}\Tr(\alpha).
\end{eqnarray*}
Analogously, a more tedious computation leads to  $$-(D.D)/6=(e_\ell^D\wedge e_\ell^D)(a_1\wedge...\wedge a_4)= \frac{1}{6} \left(\Tr(\alpha^2) - \frac{\Tr(\alpha)^2}{2}\right).$$
\end{proof} 

Using that $(D.D)=\pm 2\sqrt{\deg(\alpha)}$ is even and  \eqref{eq:divisor}, one can prove the following results.
\begin{corollary} \label{cor:bqintegral} 
  One has
 \begin{enumerate}
     \item for any $\alpha \in \Ends(A,\lambda)$, $q(\alpha) \in \Z$ and $q(\alpha)$ is congruent to $0$ or $1$ modulo 4.
     \item For all $\alpha_1,\alpha_2 \in \Ends(A,\lambda)$, with corresponding divisors $D_1$ and $D_2$, $$b_q(\alpha_1,\alpha_2) = (D_1 . \theta) \cdot (D_2 . \theta) -2 \cdot (D_1 . D_2)  \in \Z.$$      In particular the bilinear form $b_q$ (resp. $b_q/4$) is integral on $\LK$ (resp. on $\LG$).
     \item For $\alpha_1,\alpha_2$ in $\Ends(A,\lambda)$, $$b_q(\alpha_1,\alpha_2)^2 \equiv q(\alpha_1) \cdot q(\alpha_2) \pmod{4}.$$ \label{cor:cong}
 \end{enumerate}
\end{corollary}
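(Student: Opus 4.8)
The plan is to derive all three items directly from the divisorial formula \eqref{eq:divisor} of Proposition~\ref{prop:comparison}, together with Lemma~\ref{lem:atoD} and the elementary fact that self-intersection numbers on an abelian surface are even. I would first isolate this last fact: for $D\in\NS(A)$ with associated $\alpha=\lambda^{-1}\phi_D$, Geometric Riemann-Roch gives $\deg(\alpha)=\bigl(\tfrac12(D.D)\bigr)^2$, so $\tfrac12(D.D)$ is a rational number whose square is a non-negative integer, hence $\tfrac12(D.D)\in\Z$ and $(D.D)\in2\Z$ (equivalently one can invoke $\chi(\mathcal{L}_D)=\tfrac12(D.D)$ on the abelian surface).

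Granting this, item (i) is immediate: \eqref{eq:divisor} reads $q(\alpha)=(D.\theta)^2-2(D.D)$ with $(D.\theta),(D.D)\in\Z$ and $(D.D)$ even, so $q(\alpha)\in\Z$ and $q(\alpha)\equiv(D.\theta)^2\pmod4$, which is $0$ or $1$. For item (ii) I would expand $b_q(\alpha_1,\alpha_2)=\tfrac12\bigl(q(\alpha_1+\alpha_2)-q(\alpha_1)-q(\alpha_2)\bigr)$: since $D\mapsto\lambda^{-1}\phi_D$ is additive and the intersection pairing is bilinear, the self-intersection and constant terms cancel and only the cross terms survive, giving $b_q(\alpha_1,\alpha_2)=(D_1.\theta)(D_2.\theta)-2(D_1.D_2)\in\Z$. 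Integrality of $b_q$ on $\LK$ is then formal, because $q$, hence $b_q$, descends to $\Ends(A,\lambda)/\Z\cdot1$: one uses $q(\alpha+m)=q(\alpha)$, equivalently that the identity corresponds to $\theta$ with $q(1)=(\theta.\theta)^2-2(\theta.\theta)=0$ and therefore $b_q(\cdot,1)\equiv0$. For $\LG$ I would either quote the isometry $(\LG,q/4)\cong(\LK,q)$ already established, or compute directly: writing $\tau(\alpha)=2\alpha-\tfrac{\Tr(\alpha)}2\cdot1$ with $\tfrac{\Tr(\alpha)}2\in\Z$ (Remark~\ref{rem:integral}) and using $b_q(\cdot,1)\equiv0$ yields $b_q(\tau(\alpha_1),\tau(\alpha_2))=4\,b_q(\alpha_1,\alpha_2)$, so $b_q/4$ takes integer values on $\LG$.

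Item (iii) is a congruence bookkeeping on the same formulas. Put $a_i=(D_i.\theta)$; by (the proof of) item (i) one has $q(\alpha_i)\equiv a_i^2\pmod4$, and by item (ii), since $(D_1.D_2)\in\Z$, $b_q(\alpha_1,\alpha_2)=a_1a_2-2(D_1.D_2)\equiv a_1a_2\pmod2$, whence $b_q(\alpha_1,\alpha_2)^2\equiv a_1^2a_2^2\pmod4$; comparing the two congruences gives $b_q(\alpha_1,\alpha_2)^2\equiv q(\alpha_1)q(\alpha_2)\pmod4$. I do not expect a real obstacle here: the whole corollary is squeezed out of \eqref{eq:divisor} plus the parity of $(D.D)$, and the only point demanding care is keeping the normalizations consistent ($q$ versus $q/4$, and $\tau$ specialized to $g=2$) when passing between $\Ends(A,\lambda)$, $\LK$, and $\LG$.
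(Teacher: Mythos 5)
Your proof is correct and follows exactly the route the paper intends: the paper offers no written proof beyond the one-line remark that the corollary follows from the divisorial formula~\eqref{eq:divisor} and the evenness of $(D.D)$, and you simply work that hint out in full (evenness of $(D.D)$ from Riemann--Roch, expansion of $b_q$ via bilinearity, descent to $\LK$ via $q(1)=0$, and the mod-$4$ bookkeeping for item~(iii)).
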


\begin{remark} \label{rem:integral}
    The fact that $q_{(A,\lambda)} = \frac{1}{4} (2g \Tr(\alpha^2) - \Tr(\alpha)^2)$ is  in $\Z$ is actually true in for any $g \geq 1$ and boils down to the fact that $\Tr(\alpha)$ is even for symmetric endomorphisms. Using \eqref{eq:matM}, this can be  easily proved in any characteristic different from $2$. A global argument can be given using Proposition~(12.29) of the unpublished book of Edixhoven, van der Geer and Moonen on abelian varieties with \cite[Prop.12.12,p.126]{cornell}. We sketch the proof here for completeness.  We can assume that $A$ is simple. The multiplicative function $\chi : \NS(A) \otimes \Q \to \Q$ which associates to $D$ its Euler characteristic is an homogeneous polynomial of degree $g$. It induces then a homogenous polynomial function of degree $g$ on $F=\Ends(A,\lambda) \otimes \Q$ which is a power of the reduced norm on $K$. Since this norm is a polynomial of degree $[F:\Q]$, we see that $f=[F:\Q]$ divides $g$.  One can then apply \cite[Prop.12.12,p.126]{cornell} with $d=1$, to deduce that $\Tr(\alpha)=2g/f \cdot \trd(\alpha) \in 2\Z$, where $\trd$ is the reduced trace on $F$.
\end{remark}

\begin{remark} \label{rem:minpol}
Going back to $g=2$, let $\alpha \in \Ends(A,\lambda)$ and
$$M_{\alpha}=\begin{pmatrix}
    a \id_2 & M \\ \ad(M) & b \id_2
\end{pmatrix}$$
its matrix on $T_{\ell}(A)$. Then $\alpha$ satisfies a degree two minimal polynomial, namely $X^2-(a+b) X + (ab-\det(M)) \in \Z[X]$. Indeed, notice that  $a+b=\tr(M_{\alpha})/2=\Tr(\alpha)/2 \in \Z$ and that $(ab-\det(M))^2=\det(M_{\alpha})=\deg(\alpha)=\frac{1}{4} (D.D)^2 \in \Z^{\times 2}$.  
\end{remark}

\begin{remark}
There is an direct relation with the classical Humbert invariant $\Delta$ defined for  abelian surfaces with real multiplication over $\C$ (see for instance \cite{hashimoto-form} for the definition of the Humbert invariant). Using the formula on p.542 in \textit{loc. cit.} for the coefficients of the matrix of $\alpha \in \Ends(A,\lambda)$ in the rational representation, we see that $\Delta=q(\alpha)$ (see also 
\cite[Cor. 9.2.3]{kirthesis}).
\end{remark}

\subsection{Decomposable abelian surfaces}
Kani gives a generalization in all characteristics of the  characterization of decomposable abelian surfaces initially proved in terms of Humbert invariant over $\C$ for abelian surfaces with real multiplication.
\begin{theorem}[{\cite[Prop.6]{kanihumbert}}] \label{th:kaniq}
Let $(A,\lambda)$ be a principally polarised abelian surface over a field $k$. Then $(A,\lambda)$ is decomposable over $k$ if and only if there exists $\alpha \in \LK=\Ends(A,\lambda)/1_{\End(A)}$ (resp. $\beta \in \LG=\tau(\Ends(A,\lambda))$) such that $q(\alpha)=1$ (resp. $q/4(\beta)=1$).
\end{theorem}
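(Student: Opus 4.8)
The plan is to establish the equivalent statement phrased directly in terms of $\Ends(A,\lambda)$: \emph{$(A,\lambda)$ is decomposable over $k$ if and only if there is $\alpha\in\Ends(A,\lambda)$ with $q(\alpha)=1$}. The version for $\LK$ is then immediate since $q(\alpha+m)=q(\alpha)$ for all $m\in\Z$, and the version for $\LG$ follows from the isometry $(\LG,q/4)\cong(\LK,q)$ recorded above. The idea driving the argument is that such an $\alpha$ can be normalised, by subtracting a suitable integer (which changes neither its class in $\LK$ nor the value $q(\alpha)$), into a \emph{symmetric idempotent} $\alpha_0=\alpha_0^{\dag}=\alpha_0^{2}$ of trace $2$; and that such an idempotent is precisely the datum of a splitting of $(A,\lambda)$ as a product of two principally polarised elliptic curves.

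First I would dispatch the easy direction. Suppose $(A,\lambda)\cong(E_1\times E_2,\lambda_0)$ with $\lambda_0$ the product polarisation. Let $\alpha_0\in\End(A)$ be the projection $A\to E_1$ followed by the inclusion $E_1\hookrightarrow A$; then $\alpha_0^{2}=\alpha_0$, and since $\alpha_0$ acts on $T_\ell(A)\otimes\Q$ as the projection onto the $2$-dimensional summand $T_\ell(E_1)\otimes\Q$, one has $\Tr(\alpha_0)=2$. Writing $\lambda,\alpha_0,\hat\alpha_0$ in block form with respect to $A=E_1\times E_2$ and $\hat A=\hat E_1\times\hat E_2$ — where $\lambda$ is block-diagonal and both $\alpha_0$ and $\hat\alpha_0$ are the projection onto the first factor — one reads off $\lambda\alpha_0=\hat\alpha_0\lambda$, i.e. $\alpha_0\in\Ends(A,\lambda)$; and $q(\alpha_0)=\Tr(\alpha_0^{2})-\tfrac14\Tr(\alpha_0)^{2}=\Tr(\alpha_0)-1=1$.

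For the main direction I would start from $\alpha\in\Ends(A,\lambda)$ with $q(\alpha)=1$. By Remark~\ref{rem:minpol} there are $s=\Tr(\alpha)/2\in\Z$ and $N\in\Z$ with $\alpha^{2}=s\alpha-N$; taking the trace on $T_\ell(A)\otimes\Q$ gives $\Tr(\alpha^{2})=2s^{2}-4N$, hence $1=q(\alpha)=s^{2}-4N$. In particular $s$ is odd, and a short computation shows that $\alpha_0:=\alpha-\tfrac{s-1}{2}\in\Ends(A,\lambda)$ satisfies $\alpha_0^{2}=\alpha_0$ and $\Tr(\alpha_0)=2$; thus $\alpha_0\neq 0,1_A$ is a nontrivial symmetric idempotent. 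Being idempotent, $\alpha_0$ splits $A$ over $k$ into a product of abelian subvarieties $A\cong E_1\times E_2$, with $E_1=\alpha_0(A)$ and $E_2=(1_A-\alpha_0)(A)=\ker\alpha_0$; and since $\alpha_0$ acts on $T_\ell(A)\otimes\Q$ as an idempotent of trace $2$, each $E_i$ is one-dimensional, hence an elliptic curve. Finally, under the induced identification $\hat A\cong\hat E_1\times\hat E_2$ both $\alpha_0$ and $\hat\alpha_0$ are the projection onto the first factor, so the Rosati-symmetry $\lambda\alpha_0=\hat\alpha_0\lambda$ forces the off-diagonal blocks of $\lambda$ to vanish; hence $\lambda$ is the product of its restrictions to $E_1$ and $E_2$, and as $\deg\lambda=1$ these restrictions are principal polarisations, whence $(A,\lambda)\cong(E_1\times E_2,\lambda_0)$ is decomposable over $k$.

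Every individual step here is short; the one carrying the real content, and the one I expect to be the main obstacle to write cleanly, is the final block-matrix computation: it is what upgrades ``the idempotent $\alpha_0$ makes $A$ isomorphic to a product'' to ``the polarisation $\lambda$ is itself the product polarisation'', which is exactly the distinction between decomposability of the abelian variety and of the principally polarised abelian variety. As an alternative for the main direction, closer to Kani's original argument, one could instead use the divisor formula $q(\alpha)=(D.\theta)^{2}-2(D.D)$ of Proposition~\ref{prop:comparison}: after replacing $D$ by $D+n\theta$ one may assume $(D.\theta)=1$ and hence $(D.D)=0$, and a divisor of self-intersection $0$ meeting the ample class $\theta$ in a single point is, up to the appropriate equivalence, an elliptic curve $E\subset A$ with $(\theta.E)=1$, which again produces the splitting $(A,\lambda)\cong E\times(A/E)$; the idempotent route above seems the most self-contained.
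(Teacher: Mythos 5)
Your proof is correct and follows essentially the same route as the paper's: both proofs normalise $\alpha$ to the symmetric idempotent $\alpha_0=\alpha-(s-1)/2$ (the paper writes this as $e_1=(\beta+1_{\End(A)})/2$ with $\beta=\tau(\alpha)$ after first showing $\beta^2=1_{\End(A)}$ and that $\beta-1$ is divisible by $2$), use $\alpha_0$ and $1-\alpha_0$ to split $A$ as $E_1\times E_2$, and then check that $\lambda$ itself decomposes. The only cosmetic difference is in that last step: you deduce $\lambda_{12}=\lambda_{21}=0$ directly from the block form of the symmetry relation $\lambda\alpha_0=\hat\alpha_0\lambda$, whereas the paper phrases the same fact as the vanishing of the Weil pairing $e^\lambda((x_1,0),(0,x_2))$ using $e_1e_2=0$ — two equivalent ways of saying the polarisation has no cross terms.
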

Kani's proof relies on several results on the divisors of abelian surfaces. We would like to give here a different proof based on direct computations with endomorphisms.  An element $\beta=\tau(\alpha) \in \LG$ (and more generally in $L_0$) has a representation on $T_{\ell}(A)$ of the form 
$$\begin{pmatrix}
    b & M \\ M^{\ad} & -b
\end{pmatrix}.$$
In particular $\beta^2 = (b^2+\det(M)) \cdot 1_{\End(A)}$. Now, $q(\beta) = 4(b^2+\det(M)) = 4 q(\alpha)$, so we see that $\beta^2=q(\alpha) \cdot 1_{\End(A)}$. Hence, we can again reformulate the statement by asking alternatively for the existence of $\beta \in \LG$ such that $\beta^2=1_{\End(A)}$. If $(A,\lambda)$ is decomposable, it is isomorphic to $E_1 \times E_2$ with the product polarization $1 \times 1$ and $$\beta=\begin{pmatrix}
    1 & 0 \\ 0 & -1
\end{pmatrix} = \tau\left(\begin{pmatrix}
    1 & 0 \\ 0 & 0 
\end{pmatrix}\right) \in \LG \subset \Ends(E_1 \times E_2, 1 \times 1)$$ satisfies the statement. Conversely, assume that we can find such a $\beta=\tau(\alpha) \in \LG$. Since  $q(\alpha)=1=\Tr(\alpha^2)-(\Tr(\alpha)/2)^2$, we see that $\Tr(\alpha)/2$ must be odd since $\Tr(\alpha^2)$ is even. So $\beta-1=2\alpha-(\Tr(\alpha)/2-1) \cdot 1_{\End(A)}$ is divisible by $2$. This proves that $e_1=(\beta+1_{\End(A)})/2$ and $e_2=(1_{\End(A)}-\beta)/2$ are two endomorphisms of $A$ such that $e_1e_2=0$. Hence $E_1=\im(e_1) \subset \ker(e_2)$ is a proper subvariety of $A$, hence an elliptic curve and similarly for $E_2=\im(e_2)$. Moreover the morphism $x \mapsto (e_1(x),e_2(x))$ is an isomorphism from $A$ to $E_1 \times E_2 \subset A$ with inverse $e_1+e_2$. It remains to show that the polarization $\lambda$ is decomposable: 
computing the Weil pairing for $(x_1,0) \in E_1 \times E_2$ and $(0,x_2)\in E_1 \times E_2$, we see that $e^{\lambda}((x_1,0),(0,x_2))=e^{\lambda}(e_1 (x_1,0),e_2 (0,x_2)) = e^{\lambda}((x_1,0),e_1 e_2 (0,x_2))=1$ since $e_1e_2=0$.

\begin{remark} \label{rem:g3}
What about $g>2$? If $(A,\lambda)\simeq(A_1\times A_2,\lambda_1\times\lambda_2)$ where $(A_i,\lambda_i)$ are principally polarised abelian varieties of dimension $g_i$, then the symmetric endomorphism $\alpha=\operatorname{diag}(1,\dots,1,0,\dots,0)$ satisfies $q_{(A,\lambda)}(\alpha) = \frac{1}{4} (2g \Tr(\alpha^2) - \Tr(\alpha)^2)=g_1g_2$. This is therefore a necessary condition. 

Conversely, a symmetric endomorphism $\alpha$ with $q(\alpha)=g_1g_2$ defines an endomorphism $\beta=g\alpha-\Tr(\alpha)/2 \cdot 1_{\End(A)}$ with $$\Tr(\beta)=0, \; \Tr(\beta^2)=2gg_1g_2 \; \textrm{and } q(\beta)=g^2g_1g_2.$$ The endomorphism $e_1=\frac{\beta+g_1 \cdot 1_{\End(A)}}{g}$ is well-defined because $(\Tr(\alpha)/2)^2\equiv {g_1}^2\mod{g}$ and we can take $-\alpha$ instead of $\alpha$ if needed. We also define $e_2=1_{\End(A)}-e_1$. It remains to check whether $e_1e_2=0$: this happens if and only if $\beta^2+(g_1-g_2)\beta-g_1g_2=0$.
This is always the case for $g=2$ and $g_1=1$. For $g=3$ and $g_1=1$ for instance,  Lemma \ref{lem:repalpha} only implies that we have $\beta^3-3\beta-\Tr(\beta^3)/6=0$. So, the condition $\beta^2-\beta-2=0$ has to be checked. If $\Ends(A,\lambda)$ can be explicitly computed  (as we will see in Section \ref{sec:algoprime} this is the case for $A=E^2$ and more generally for $E^g$), this computation is doable since there will be only a finite number of $\beta \in \LG$ with $(q/g^2)(\beta)=g_1g_2$ since $q/g^2$ is a positive definite form on the lattice $\LG$. In addition, since $\beta$ is self-adjoint, it diagonalises and it is equivalent to check whether $\Tr(\beta^3)=12$. 
For $g>3$, in order to check decomposability, it is not enough to check for $g_1=1$ (i.e. for an elliptic factor) and one needs to consider all possibilities for $g_1=1,\dots,\lfloor \frac{g}{2}\rfloor$. 

\end{remark}

\section{An algorithm to determine if $(E^2,\lambda)$ has decomposable reduction at a given prime} \label{sec:algoprime}

We restrict now to the case $A=E^2$  where $E$ is an elliptic curve over a number field having complex multiplication (CM) by the maximal order $\Oc=\Z[\omega]$ of discriminant $-d<0$ of an imaginary quadratic field $K$  with non-trivial involution $\bar{\phantom{a}}$. The elliptic curve $E$ can be defined over the Hilbert class field $H$ of $K$. We denote by $\lambda_0$ the product polarization on $E^2$ and by $\lambda$ any principal polarization defined over $\C$. The abelian surface $E^2$ is defined over $H$ and also all its geometric endomorphisms. Hence the polarization $\lambda=\lambda_0 (\lambda_0^{-1} \lambda)$ is actually defined over $H$. Since $\Ends(E^2,\lambda)=\Ends(E_{\bar{H}}^2,\lambda)$, we get that $\NS(A_{\bar{H}})=\NS(A)$ and by Theorem~\ref{th:kaniq}, $\lambda$ is geometrically indecomposable if and only if it is indecomposable. Hence, if $\lambda$ is indecomposable, $(E^2,\lambda)$ is the Jacobian of a genus $2$ curve $C$, also defined over $H$. Indeed, the possible obstruction for $C$ to be defined on $H$ is measured by the map $\mu$ in the exact sequence $$\hat{A}(H) \to \Pic(A)(H) \to \NS(A_{\bar{H}})^{\Gal(\bar{H}/H)} \xrightarrow{\mu} H^1(\Gal(\bar{H}/H),A(\bar{H}))$$ \cite[Rem.13.2, p.127]{cornell}. Now, as explained in \cite[Appendix~B.1]{silverman}, for $[D] \in \NS(A_{\bar{H}})$, $\mu([D])$ is defined by mapping a preimage $D \in \Pic(A_{\bar{H}})$ of $[D]$  to the class of $\zeta_{\sigma} : \sigma \mapsto  D^{\sigma}-D$. But there exist explicit representatives in $\Pic(A_{\bar{H}})$ of generators of $\NS(A_{\bar{H}})=\NS(A)$ and there are defined over $H$ (see \cite[Prop.23]{kanihumbert}). So $\zeta_{\sigma}=0$ for all $\sigma$ and the map $\mu$ is the zero map. Hence there is no obstruction.

 Let $p \in \mathbb{N}$ be a prime and  $\frakp$  be a prime ideal over $p$ in the ring of integers of $H$. We recall now the decomposition of $p$ in $H$.

 \begin{lemma}\label{lem:decomp} (See Thm. 3.3 in \cite{llo}) Let $K=\mathbb{Q}(\sqrt{-d})$ with $d>0$ a fundamental discrimimant, $\Oc$ be its maximal order and $H$ be its Hilbert class field. Let $h=\textrm{Cl}(O)=[H:K]$.  If $p$ is inert in $\Oc$ then $p$ decomposes as $\frak{p}_1\dots\frak{p}_h$ in $H$. If $p\mid d$ and $d=-p$ or $-4p$, then $p=\frak{p}_1^2\dots\frak{p}_h^2$. If $p\mid d$ and $d\notin\{-p,-4p\}$, then $h$ is even and $p=\frak{p}_1^2\dots\frak{p}_{h/2}^2$
 \end{lemma}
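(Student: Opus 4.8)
The plan is to factor $p$ first in $K/\Q$ and then in $H/K$, the latter step via class field theory, combining the two by multiplicativity of ramification indices and residue degrees in the tower $\Q\subset K\subset H$. Recall that $H/K$ is unramified and abelian, and that the Artin map gives an isomorphism $\Cl(\Oc)\xrightarrow{\sim}\Gal(H/K)$ sending the class of a prime ideal $\mathfrak q$ of $\Oc$ to the Frobenius at $\mathfrak q$. Hence every prime $\mathfrak q$ of $\Oc$ is unramified in $H$, the residue degree over $\mathfrak q$ of a prime of $H$ above it equals the order $f$ of $[\mathfrak q]$ in $\Cl(\Oc)$, and $\mathfrak q$ decomposes in $H$ into $h/f$ distinct primes; in particular $\mathfrak q$ splits completely in $H$ exactly when it is principal.

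If $p$ is inert in $\Oc$, then $p\Oc$ is a prime ideal, and it is principal, hence splits completely in $H/K$; the $h$ primes $\frakp_i$ above it satisfy $e(\frakp_i/p)=1$ and $f(\frakp_i/p)=2$, so $p$ factors in $H$ as $\frakp_1\cdots\frakp_h$ with the $\frakp_i$ pairwise distinct.

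Suppose instead $p\mid d$, so $p\Oc=\p^2$ with $\p$ the unique prime of $\Oc$ above $p$, of norm $p$. If $x\in\Oc$ satisfies $\Nm_{K/\Q}(x)=p$ then $(x)$ is a prime of norm $p$ lying over $p$, hence $(x)=\p$; conversely a principal $\p$ gives such an $x$. So one must decide when $p$ is a norm from $\Oc$. A direct bounding argument with the norm form of $\Oc=\Z[\omega]$---treating separately the shapes $\omega=\tfrac{1+\sqrt{-d}}{2}$ and $\omega=\sqrt{-d/4}$---shows that, under $p\mid d$, this occurs precisely when $d=p$ or $d=4p$, in which case $\p=(\sqrt{-p})$. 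In those two subcases $\p$ is principal and splits completely in $H/K$, and the $h$ primes $\frakp_i$ of $H$ above $p$ have $e(\frakp_i/p)=2$ and $f(\frakp_i/p)=1$, so $p=\frakp_1^2\cdots\frakp_h^2$. Otherwise $\p$ is not principal; since $[\p]^2=[(p)]=1$, the class $[\p]$ has order $2$ in $\Cl(\Oc)$, whence $2\mid h$, and $\p$ has residue degree $2$ in $H/K$, decomposing there into $h/2$ distinct primes; over $\Q$ each has $e(\frakp_i/p)=2$ and $f(\frakp_i/p)=2$, giving $p=\frakp_1^2\cdots\frakp_{h/2}^2$.

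The substantive step, and the one I would be most careful with, is the norm-form computation pinning down when $\p$ is principal: one has to rule out spurious representations $p=\Nm(x)$ and, in particular, watch the small discriminants where $2$ is the ramified prime (the case $d=4$, $K=\Q(i)$, where $h=1$ is a degenerate edge case of the statement). The remaining ingredients---the class field theory dictionary for the everywhere-unramified abelian extension $H/K$ and multiplicativity of $e$ and $f$ along $\Q\subset K\subset H$---are standard.
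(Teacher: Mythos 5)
The paper does not prove this lemma---it simply cites Thm.\ 3.3 of \cite{llo}---so there is no in-paper argument to compare against. Your proof supplies the standard argument and is correct: decompose $p$ first in $K/\Q$, then use the class field theory dictionary for the everywhere-unramified abelian extension $H/K$ (residue degree of a prime of $H$ over $\q$ equals the order of $[\q]$ in $\Cl(\Oc)$, number of primes above $\q$ is $h/f$), and multiply $e$ and $f$ along the tower. The three asserted shapes then fall out exactly as you compute once you know, in the ramified case $p\Oc=\p^2$, whether $\p$ is principal or of order $2$.

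The only part you leave as a sketch is the ``substantive step'': deciding when the ramified prime $\p$ is principal, equivalently when $p$ is a norm from $\Oc$. This is indeed the point where one must be careful. A clean way to finish it, rather than ad hoc bounding: write $N(a+b\omega)=(a+b/2)^2+\tfrac{d}{4}b^2$ when $\omega=\tfrac{1+\sqrt{-d}}{2}$ (resp.\ $N(a+b\omega)=a^2+\tfrac{d}{4}b^2$ when $\omega=\sqrt{-d/4}$). Any solution with $N=p$ forces $b\ne 0$, hence $\tfrac{d}{4}b^2\le p$, i.e.\ $b^2\le 4p/d$. When $d$ is a proper multiple of $p$ beyond $\{p,4p\}$ this forces $b^2<1$, a contradiction; when $d=p$ or $d=4p$, $b=\pm1$ yields the explicit generator $\sqrt{-p}$ (and for $d=8$, $\sqrt{-2}$). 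Equivalently, one can invoke genus theory: $[\p]$ is always $2$-torsion, and $\p$ is principal exactly when $-d$ is a prime discriminant, i.e.\ $-d\in\{-4,-8\}\cup\{-q: q\equiv 3\pmod 4 \text{ prime}\}$.

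You are also right to flag $d=4$ ($K=\Q(i)$, $p=2$): there $\p=(1+i)$ is principal and $h=1$, but $d\notin\{p,4p\}$, so the trichotomy as literally written misclassifies this case into the ``$h$ even'' branch. This is a genuine defect of the lemma's phrasing, but it is harmless in the paper since the relevant range is $d>4$ (as observed just after the lemma, an indecomposable principal polarization on $E^2$ requires $d\ge 8$). If you want a version that is literally correct for all fundamental $-d<0$, replace ``$d=p$ or $d=4p$'' by ``$-d$ is a prime discriminant''.

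Minor notational point: the statement's ``$d=-p$ or $-4p$'' is a sign typo for ``$-d=-p$ or $-4p$'', i.e.\ $d=p$ or $d=4p$; you read it the intended way.
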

 
 Let $H_{\frakp}$ be the completion of $H$ at $\frakp$. Since $E$ has CM, it has potentially good reduction everywhere and by \cite[Cor.5.22]{rubinCM}, there actually exists a model $\Ec$ of $E$ over the ring of integers $\Z_{\frakp}$ of $H_{\frakp}$ which has good reduction $\bar{E}$ over the residue class field $k_{\frakp}$. In the sequel, we always suppose that we work with this model. We also denote by $\bar{\lambda}$ the reduction modulo $\frakp$ of the extension of $\lambda$ to $\Ec^2$. 
 
\begin{definition}
We say that $(A,\lambda)=(E^2,\lambda)$ has \emph{decomposable reduction} at $\frakp$ (or that $\frakp$ is a prime of decomposable reduction for $(A,\lambda)$) if the special fiber $(\bar{E}^2,\bar{\lambda})$ of $(\Ec^2,\lambda)$ over $\Z_{\frakp}$ is decomposable, i.e. if there exists two elliptic curves $\bar{E}_1,\bar{E}_2$ over $k_{\frakp}$ such  $(\bar{A},\bar{\lambda}) \simeq (\bar{E}_1 \times \bar{E}_2,1 \times 1)$, where $1 \times 1$ is the product polarization on $\bar{E}_1 \times \bar{E}_2$.  
\end{definition}
Equivalently, $(E^2,\lambda)$ has decomposable reduction at $\frakp$ if and only if the reduction of the stable model of the curve $C$ over $\Z_{\frakp}$ is not smooth, more precisely, the union of the two elliptic curves $\bar{E}_i$ intersecting at one point.\\

One can check algorithmically if a prime is of decomposable reduction. We now explicitly describe the various elements we need. Our running example will correspond to the example given in the introduction with $d=163$, i.e. $\omega=\frac{1+\sqrt{-d}}{2}$, which is of class number $1$, hence up to $\bar{\Q}$-isomorphism, the curve $E$ is unique.

\subsection{Polarization} \label{sec:pol}  Let $\lambda_0$ be the product polarization on $E^2$. One can represent the principal polarization $\lambda$ by a hermitian positive definite matrix $P=\lambda_0^{-1} \lambda=\begin{pmatrix} a & b \\ \bar{b} & c \end{pmatrix} \in M_2(\Oc)$ of determinant $1$ with $a,c \in \Z$. Given an order $\Oc$, there exist algorithms to produce all principal polarizations $\lambda$ on $E^2$. This was done in \cite{gelin}\footnote{Note that if \cite{gelin} gives all the principal polarizations, it sometimes wrongly states that some are indecomposable (but never for the cases of interest in the rest of the article). A corrected version for the present article was kindly provided by E. Howe.} and more generally for $A \sim E^2$ and $\Oc$ not necessarily maximal in \cite{KNRR}. Note that the smallest discriminant $d$ for which an indecomposable principal polarization exists is $d=8$. In the sequel, we will refer to $P$ by $[a,c,b]$.

\begin{example}
There are 7 indecomposable principal polarizations on $E^2$ where $E$ has CM by an order of discriminant $-d=-163$ and the one corresponding to  example~\eqref{eq:villegaseq} is $P=\begin{pmatrix} 6 & \omega \\ \bar{\omega} & 7 \end{pmatrix}$ or $P=[6,7,\omega]$. 
\end{example}

\subsection{The Gross lattice $\LG$}
The condition $\alpha=\alpha^{\dag}$ is equivalent to ${^t \bar{\alpha}} P = P \alpha$. By composing
the isomorphisms $\Ends(A,\lambda_0) \to NS(A) \to \Ends(A,\lambda)$, $\alpha \mapsto \lambda^{-1} \lambda_0 \alpha=P^{-1} \alpha$, a basis of $\Ends(A,\lambda)$ is given by the images by $P^{-1}$ of the basis of $\Ends(A,\lambda_0)$ consisting of
\begin{equation} \label{eq:eis}
e_1=\begin{pmatrix} 1 & 0 \\ 0 & 0 \end{pmatrix}, \, e_2=\begin{pmatrix} 0 & 0 \\ 0 & 1 \end{pmatrix},\, e_3=\begin{pmatrix} 0 & 1 \\ 1 & 0 \end{pmatrix} \, \textrm{and }e_4=\begin{pmatrix} 0 & \omega \\ \bar{\omega} & 0 \end{pmatrix}.
\end{equation}
On can find a basis for the Gross lattice $\LG$ taking the image by $\tau$ of $P^{-1} e_1,\ldots,P^{-1} e_4$ and then  using the Smith normal form algorithm.
We denote by $\ell_1,\ell_2,\ell_3$ a basis of $\LG$.

\begin{example}
For our running example, we find $$\ell_1=\begin{pmatrix}
    1 & -2\omega \\ -2+2\omega & -1 
\end{pmatrix}, \ell_2=\begin{pmatrix}
    5 & 4 \omega \\ 2 - 2 \omega & -5 
\end{pmatrix} \, \textrm{and } \ell_3=\begin{pmatrix}
    3-2 \omega & 14-4\omega \\ 8+4\omega & -3+2 \omega
\end{pmatrix}.$$
\end{example}

\subsection{Quadratic form on $L$} \label{sec:quad}  It is easy to give an explicit expression for the quadratic form $q$ for an element $\alpha \in \Ends(A,\lambda) \subset M_2(K)$. Indeed, we have that $T_{\ell}(A)=T_{\ell}(E) \oplus T_{\ell}(E)$. We see that $\Tr(\alpha)=\tr_{K/\Q} \tr(\alpha)$  where the latter is the trace of $\alpha$ seen as a matrix in $M_2(K)$. Note that, looking at the elements $P^{-1} e_i$,  we can check that the matrices in $\Ends(A,\lambda)$ have  a trace in $\Q$ so $\Tr(\alpha)=2\tr(\alpha)$. On the other hand, for any 2 by 2 matrix $M$, one has $\tr(M^2)=\tr(M)^2-2 \det M$ so we have the following equivalent expressions $$q(\alpha)=2 \tr(\alpha^2) - \tr(\alpha)^2 = \tr(\alpha)^2-4\det(\alpha).$$ 
For $\beta \in \LG$, we therefore have the very simple expression $q(\beta)/4= -\det(\beta)$.

\begin{example} \label{ex:gram163}
We can give the Gram matrix of the quadratic form $q/4$ on $\LG=\langle \ell_1,\ell_2,\ell_3 \rangle$.
$$\Gram(\LG,q/4)= (b_q/4(\ell_i,\ell_j))_{i,j \in \{1,2,3\}} = \begin{pmatrix}
    165&- 241 & 317 \\ -241 & 353 & -463 \\  317 & -463 & 613 
\end{pmatrix} \sim 
\begin{pmatrix}
    5 & -1  & 2 \\
-1  & 5  & 0 \\
 2 &  0 & 28
\end{pmatrix}.$$
\end{example}

\subsection{The Gross lattice $\bar{\LG}$ of $(\bar{A},\bar{\lambda})$} \label{sec:grossLG}
Let $(\bar{A},\bar{\lambda})/\k_{\frakp}$ be the special fiber of the stable model of $(A,\lambda)$ over $\Z_{\frakp}$. Since $\bar{A}=\bar{E}^2$ is smooth, we have that  $\Ends(A,\lambda) \hookrightarrow \Ends(\bar{A},\bar{\lambda})$. If $\bar{E}$ is ordinary, i.e. $p$ is split in $\Oc$, since $\Oc$ is the maximal order, then $\End(\bar{E}) \simeq \End(E)$, $\Ends(A,\lambda) \simeq \Ends(\bar{A},\bar{\lambda})$ and $\bar{\LG}:=\tau(\Ends(\bar{A},\bar{\lambda}))$ is isometric to $\LG$. Since $\lambda$ is indecomposable, there is no $\ell_0 \in L$ such that $q/4(\ell_0)=1$. Therefore, there is neither such an element in $\bar{\LG} \simeq \LG$ and $\frakp$ is not a prime of decomposable reduction by Theorem~\ref{th:kaniq}.

Hence, if $\frakp$ is a prime of decomposable reduction then 
 $\bar{E}$ is supersingular, i.e. $p$ is inert or ramified in $\Oc$; i.e. the Legendre symbol $\left(\frac{-d}{p}\right)=-1$ or $0$ when $p \ne 2$. 
 \begin{lemma}
Let $p$ be a prime and $-d$ be a fundamental discriminant. Assume that $d>4$ and that $p \notin \{d,d/4\}$. Then all the geometric endomorphisms of $\bar{E}$ are defined over the residue field $k_{\frakp}$ of $\Z_{\frakp}$ and we denote by $\Rc:=\End(\bar{E})$. It is a maximal order in the definite quaternion algebra  $\Bc_{p,\infty}=\Rc \otimes \Q$ of discriminant $p$. 
\end{lemma}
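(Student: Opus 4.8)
The plan is as follows. We work in the case where $\bar{E}$ is supersingular, i.e. $p$ is inert or ramified in $\Oc$ (this is the setting of the statement, since otherwise $\End(\bar{E})\otimes\Q$ is commutative). By the classical theorem of Deuring, the geometric endomorphism ring $\Oc_{\max}:=\End(\bar{E}_{\bar{k}_{\frakp}})$ is then a maximal order in the quaternion $\Q$-algebra ramified exactly at $p$ and $\infty$, which is definite of discriminant $p$. So it suffices to prove $\End(\bar{E})=\Oc_{\max}$, i.e. that every geometric endomorphism of $\bar{E}$ is already defined over $k_{\frakp}$; one then sets $\Rc:=\End(\bar{E})=\Oc_{\max}$ and $\Bc_{p,\infty}:=\Rc\otimes\Q$.

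\emph{Step 1: the residue field is $\F_{p^2}$.} I would read off $f(\frakp/p)$ from Lemma~\ref{lem:decomp}. If $p$ is inert then $p\Oc$ is a prime of $K$ of residue degree $2$ that splits completely in $H/K$, so $f(\frakp/p)=2$. If $p$ is ramified with $p\notin\{d,d/4\}$ (equivalently $-d\notin\{-p,-4p\}$), then $p\Oc=\mathfrak{l}^2$ with $f(\mathfrak{l}/p)=1$, and Lemma~\ref{lem:decomp} forces $\mathfrak{l}$ to have residue degree $2$ in the unramified extension $H/K$, so again $f(\frakp/p)=2$. In both cases $k_{\frakp}\cong\F_{p^2}$. (It is precisely in the excluded cases $p\in\{d,d/4\}$ that Lemma~\ref{lem:decomp} gives $f(\frakp/p)=1$, and there the assertion genuinely fails.)

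\emph{Step 2: the Frobenius is $\pm p$.} Since $E$ has CM by $\Oc$ over $H\supseteq K$, all its endomorphisms are defined over $H$ and reduction modulo $\frakp$ gives an embedding $\Oc=\End(E)\hookrightarrow\End(\bar{E})$. Let $\pi\in\End(\bar{E})$ be the $p^2$-power Frobenius. It commutes with every $k_{\frakp}$-endomorphism, in particular with $\Oc$; since a commutative $\Q$-subalgebra of $\Bc_{p,\infty}$ has dimension at most $2$ and $\Oc\otimes\Q=K$ already has dimension $2$, we get $\pi\in K$, hence $\pi\in K\cap\Oc_{\max}$, an order of $K$ containing the maximal order $\Oc$; therefore $\pi\in\Oc$. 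Its reduced norm equals its degree, so $\Nm_{K/\Q}(\pi)=p^2$ and $(\pi)$ is an ideal of $\Oc$ of norm $p^2$; such an ideal equals $(p)$ when $p$ is inert and $\mathfrak{l}^2=(p)$ when $p$ is ramified, so $(\pi)=(p)$ in either case and $\pi/p\in\Oc^{\times}$. This is where $d>4$ enters: then $\Oc^{\times}=\{\pm1\}$ and $\pi=\pm p\in\Z$.

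\emph{Step 3: conclusion.} As $\pi=\pm p$ is central in $\Bc_{p,\infty}$ it commutes with all of $\Oc_{\max}$; since a geometric endomorphism of $\bar{E}$ is defined over $k_{\frakp}$ exactly when it commutes with the Frobenius $\pi$, every geometric endomorphism is defined over $k_{\frakp}$, i.e. $\End(\bar{E})=\Oc_{\max}$. Combined with Deuring's description of $\Oc_{\max}$ recalled above, this gives the lemma. The one genuinely substantive step is Step~1: identifying the residue field correctly is exactly what forces the hypothesis $p\notin\{d,d/4\}$, while $d>4$ is what rules out extra units in Step~2; everything else is Deuring's theory together with the elementary observation that a norm-$p^2$ ideal of $\Oc$ can only be $(p)$.
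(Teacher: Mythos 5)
Your proof is correct, but it takes a genuinely different route from the paper's. The paper invokes Waterhouse's classification of Weil polynomials of supersingular elliptic curves over $\F_{p^2}$ (namely $(X\pm p)^2$, $X^2\pm pX+p^2$, $X^2+p^2$), notes that in the first case \emph{loc.\ cit.} already gives that all geometric endomorphisms are $\F_{p^2}$-rational and form a maximal order, and rules out the remaining cases because they would force $\End(\bar{E})\otimes\Q$ to be $\Q(\sqrt{-3})$ or $\Q(\sqrt{-1})$, which is incompatible with the embedding $\Oc\hookrightarrow\End(\bar{E})$ once $d>4$. You instead derive the key fact $\pi=\pm p$ directly: $\pi$ commutes with the reduction of $\Oc$, $K$ is a maximal commutative $\Q$-subalgebra of $\Bc_{p,\infty}$ so $\pi\in K$, the intersection $K\cap\Oc_{\max}$ is an order containing the maximal order $\Oc$ and hence equals it, and the norm computation $(\pi)=(p)$ together with $\Oc^\times=\{\pm1\}$ (this is where $d>4$ is used) forces $\pi=\pm p$; centrality of $\pi$ then shows every geometric endomorphism is $k_{\frakp}$-rational, and Deuring's theorem gives the maximal-order conclusion. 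Both arguments share the same Step~1 (identifying $k_{\frakp}\cong\F_{p^2}$ via Lemma~\ref{lem:decomp}, which is exactly where $p\notin\{d,d/4\}$ is needed). Your version replaces the Waterhouse case enumeration with a self-contained Frobenius calculation inside the CM order; the trade-off is that you must explicitly cite Deuring for the maximal-order statement, which is folded into the Waterhouse reference in the paper's version. Your proof also makes more transparent exactly where each hypothesis ($d>4$ versus $p\notin\{d,d/4\}$) enters, which is a modest expository gain.
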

Observe that, in our situation, we necessarily have $d>4$, otherwise there is no indecomposable principal polarization on $E^2$. The second condition can actually be weakened: as we shall see in Theorem~\ref{th:finitep}, $p \leq d/4$ so that only $p=d/4$ may occur. In our experiments in Section~\ref{sec:datum}, we encounter this case only for $d=8$.
\begin{proof}
    If $p$ is inert, or ramified with $p \ne d$ or $d/4$, then by  Lemma~\ref{lem:decomp}, $\frakp | p$  has an inertia degree equal to 2 and therefore $k_{\frakp}= \F_{p^2}$.
    According to \cite[Prop.4.1]{waterhouse}, over $\F_{p^2}$, the Weil polynomial of $\bar{E}$ can be $(X \pm p)^2, X^2 \pm p X+p^2$ or $X^2+p^2$. In the first case, as written in \textit{loc. cit.}, all endomorphisms of $\bar{E}$ are defined over $\F_{p^2}$ and we therefore get a maximal order in a quaternion algebra over $k_{\frakp}$. The last two cases imply that $\End(\bar{E}) \otimes \Q = \Q(\sqrt{-3})$ or $\Q(\sqrt{-1})$. However since $\Oc=\End(E) \hookrightarrow \End(\bar{E})$ and $d>4$, these cases cannot occur.
\end{proof}

In practice, one would like to avoid doing explicit computations with the geometric objects. This will be possible when we will consider not a single $E$ but the full Galois orbit of $E$ in Section~\ref{sec:PDR}, which is our main interest. The following steps were therefore not implemented in our programs and we refer to them vaguely. We assume that one can compute
\begin{itemize}
    \item a smooth model of $E$ over $\Z_{\frakp}$ and its reduction $\bar{E}/\F_{p^2}$ (this is Tate's algorithm \cite{tate-alg}, and it is implemented in \texttt{Magma}); 
    \item the ring of endomorphisms $\Rc=\End(\bar{E})$ (see for instance \cite{eisentrager});
    \item  the reduction map $\End(E) \hookrightarrow \End(\bar{E})=\Rc$.    
\end{itemize}

Now, to check if $\frakp$ is a prime of decomposable reduction, one could work in principle with any basis of $\Rc$. However, for future use in Section~\ref{sec:PDR}, it will be important to choose a particular basis and we take right away this path. Since $\Oc$ is maximal, the embedding of $\Oc$ in $\Rc$ is optimal and we can extend the basis $1,\omega$ of $\Oc$ to a basis of $\Rc$, which we denote $1,\omega,r_1,r_2$ (see \cite[Exercise 8 in Chap. 10]{QAVoight}). This determines an explicit image of $P \in M_2(\Rc)$ which we will still denote $P$.

A basis of $\Ends(\bar{A},\bar{\lambda})$ is given by the union of $(P^{-1} e_i)_{1 \leq i \leq 4}$, where the $e_i$'s are defined in \eqref{eq:eis}, and the images by $P^{-1}$ of
$$e_5=\begin{pmatrix} 0 & r_1 \\ \bar{r}_1 & 0 \end{pmatrix}, \, e_6=\begin{pmatrix} 0 & r_2 \\ \bar{r}_2 & 0 \end{pmatrix}.$$
It is easy to construct a basis of $\bar{\LG} :=\tau(\Ends(\bar{A},\bar{\lambda})\})$:
\begin{lemma} \label{lem:LGbase}
The basis $\ell_1,\ell_2,\ell_3$ of the lattice $\LG \subset \bar{\LG}$ can be completed by $\ell_4=\tau(P^{-1} e_5)$ and $ \ell_5=\tau(P^{-1} e_6)$ into a basis of $\bar{\LG}$.
\end{lemma}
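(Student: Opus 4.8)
The plan is to show that the five elements $\ell_1,\ldots,\ell_5$ are $\Z$-linearly independent in $\bar{\LG}$ and generate it over $\Z$. Since $\bar{\LG}$ has rank $5$ (as $\Ends(\bar{A},\bar{\lambda})$ has rank $6$ and $\tau$ has kernel $\Z\cdot 1$), it suffices to prove that the matrix expressing $\ell_1,\ldots,\ell_5$ in terms of some known $\Z$-basis of $\bar{\LG}$ is invertible over $\Z$, i.e. has determinant $\pm 1$. First I would record that $\tau(P^{-1}e_1),\ldots,\tau(P^{-1}e_6)$ span $\bar{\LG}$ over $\Z$, since $(P^{-1}e_i)_{1\le i\le 6}$ is a $\Z$-basis of $\Ends(\bar{A},\bar{\lambda})$ (as stated just before the lemma) and $\tau$ is $\Z$-linear with $\tau(\alpha)=2\alpha-\Tr(\alpha)/2$ in the case $g=2$; the six images span $\bar{\LG}$ by definition, and exactly one relation among them comes from $\ker\tau$.

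The key structural point is that the extension $1,\omega,r_1,r_2$ of the basis $1,\omega$ of $\Oc$ to a basis of $\Rc$ is \emph{block-triangular}: writing any element of $\Rc$ in coordinates, the span of $1,\omega$ is exactly $\Oc$, so $r_1,r_2$ project to a $\Z$-basis of $\Rc/\Oc$. Consequently, in the $\Z$-basis $\tau(P^{-1}e_1),\ldots,\tau(P^{-1}e_6)$ of (a lattice containing) $\bar{\LG}$, the sublattice generated by $\tau(P^{-1}e_1),\ldots,\tau(P^{-1}e_4)$ is precisely $\LG$ — this is how $\LG$ was defined in Section~\ref{sec:quad}, via the Smith normal form on the images of $P^{-1}e_1,\ldots,P^{-1}e_4$ — while $\ell_4=\tau(P^{-1}e_5)$ and $\ell_5=\tau(P^{-1}e_6)$ involve the new generators $r_1,r_2$ only through their $M_2$-off-diagonal entries $e_5,e_6$. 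So I would argue that, modulo $\LG$, the classes of $\ell_4$ and $\ell_5$ generate the quotient $\bar{\LG}/\LG$: indeed $\bar{\LG}/\LG$ is a quotient of $\Ends(\bar{A},\bar{\lambda})/\Ends(A,\lambda)$, which is generated by the images of $P^{-1}e_5,P^{-1}e_6$, hence of $e_5,e_6$ after clearing the common factor $P^{-1}$, and applying $\tau$ (which is injective modulo $\Z\cdot 1$, and $\Z\cdot 1$ already lies in $\LG$) sends these generators to $\ell_4,\ell_5$.

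The remaining point — and the only place a genuine computation enters — is that $\ell_4,\ell_5$ are not merely generators but a \emph{basis} of $\bar{\LG}/\LG$, equivalently that $\bar{\LG}/\LG\cong\Z^2$ and the two classes are independent. Freeness is clear since $\bar{\LG}$ is torsion-free of rank $5$ and $\LG$ is a saturated rank-$3$ sublattice (it equals $\LG\otimes\Q\,\cap\,\bar{\LG}$ because it is the set of $\tau$-images of $\Ends(A,\lambda)=\Ends(\bar A,\bar\lambda)\cap(\End(A)\otimes\Q)$), so $\bar{\LG}/\LG$ is free of rank $2$; independence of the classes of $\ell_4,\ell_5$ follows from the fact that $r_1,r_2$ are independent in $\Rc/\Oc$ and the map $\alpha\mapsto\tau(P^{-1}\alpha)$ is injective modulo $\Ends(A,\lambda)$. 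I expect the main obstacle to be bookkeeping the interplay between the three ``levels'' — the matrix ring $M_2$, the order $\Rc$, and the lattice $\bar\LG$ — carefully enough to see that no extra denominators or index divisibilities are introduced when passing from $e_5,e_6$ to $P^{-1}e_5,P^{-1}e_6$ and then to $\tau(P^{-1}e_5),\tau(P^{-1}e_6)$; concretely one checks that $P^{-1}\in M_2(\Rc)$ with $\det P=1$, so multiplication by $P^{-1}$ is a $\Z$-module automorphism of $M_2(\Rc)$ and preserves the relevant index, reducing everything to the already-established block-triangular structure of $1,\omega,r_1,r_2$.
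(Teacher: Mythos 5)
Your proof is correct but takes a genuinely different route from the paper's. The paper's argument is a quick $\Q$-dimension count: it shows that $\dim_\Q\langle\tau(P^{-1}e_1),\dots,\tau(P^{-1}e_6)\rangle=5$ and that this space splits as the direct sum of the $3$-dimensional span of the first four and the $2$-dimensional span of the last two (both dimensions read off from the facts that $P^{-1}$ is an isomorphism, that $\tau$ has $1$-dimensional kernel $\Q\cdot 1$, and that $1=P^{-1}P$ lies in $\langle P^{-1}e_1,\dots,P^{-1}e_4\rangle$ but not in $\langle P^{-1}e_5,P^{-1}e_6\rangle$); since the five $\ell_i$ visibly generate $\bar{\LG}$ over $\Z$, $\Q$-independence upgrades them to a $\Z$-basis. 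You instead go through the quotient $\bar{\LG}/\LG$: you show it is free of rank $2$ by establishing that $\LG$ is \emph{saturated} in $\bar{\LG}$ (via $\Ends(A,\lambda)=\Ends(\bar A,\bar\lambda)\cap(\End(A)\otimes\Q)$, which in turn rests on the optimality $\Oc=\Rc\cap K$ of the embedding), and that $\ell_4,\ell_5$ generate it. Both routes are valid; the paper's needs no saturation statement and is shorter, while yours makes the module-theoretic structure $\LG\hookrightarrow\bar{\LG}$ more explicit, which is useful intuition elsewhere in the paper. Two small remarks on your write-up: the six elements $\tau(P^{-1}e_1),\dots,\tau(P^{-1}e_6)$ are \emph{not} a $\Z$-basis of anything (they satisfy one $\Z$-relation coming from $1\in\ker\tau$), so ``$\Z$-basis \dots\ of (a lattice containing) $\bar\LG$'' should read ``$\Z$-generating set of $\bar\LG$''; and once you know $\bar{\LG}/\LG\cong\Z^2$ and that the classes of $\ell_4,\ell_5$ generate it, the final independence argument via $r_1,r_2\in\Rc/\Oc$ is redundant (a surjective $\Z$-linear map $\Z^2\to\Z^2$ is automatically bijective).
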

\begin{proof}
It is enough to check that 
\begin{eqnarray*} 5 & =& \dim \langle \tau(P^{-1}e_1),\tau(P^{-1}e_2),\tau(P^{-1}e_3),\tau(P^{-1}e_4),\tau(P^{-1}e_5),\tau(P^{-1}e_6 \rangle \\
& =& \dim \langle \tau(P^{-1}e_1),\tau(P^{-1}e_2),\tau(P^{-1}e_3),\tau(P^{-1}e_4) \rangle+\dim \langle \tau(P^{-1}e_5),\tau(P^{-1}e_6) \rangle.
\end{eqnarray*}This is because $P^{-1}$ is an isomorphism and $\tau$ has a 1-dimensional kernel.
\end{proof}

\begin{example} \label{ex:theells}
    We resume our example with $d=163$.  Let us consider for instance $p=17$ which is inert in $K=H$, so we take the prime ideal $\frakp=(17)$. We have $ \Bc_{p,\infty}=\langle 1,i,j,k\rangle$, with $i^2=-3, j^2=-17$ and $k=ij$. There are 2 supersingular elliptic curves over $\F_{p^2}$ (those with $j$-invariants $0$ and $8$), but only one has an endomorphism ring $\Rc \subset \Bc_{p,\infty}$ which contains $\Oc=\Z[\omega]$ (the one with $j=8$ since $\Rc $ does not have extra elements of norm $1$). There are only two possibilities for the image of $\omega$, namely 
    $1/2 - 4/3i - 5/4j + 5/12k$ or its conjugate. As we did not fix an identification of $\End(\bar{E})$ with $\Rc$, we cannot decide. However, it was shown in \cite[Prop.4.1]{gelin} that $(E^2,\lambda_0 P)$ is isomorphic to $(E^2,\lambda_0 \bar{P})$, so we can choose arbitrarily. We then take 
   \begin{multline*}1,\,\omega=1/2 - 4/3i - 5/4j + 5/12k,\,r_1=-1/2 + 1/2i + 1/2j - 1/2k,\\ \text{ and }r_2=-1/2 + 2/3i + 3/4j + 5/12k.\end{multline*}
 We find that a basis of $\bar{\LG}$ is given by $\ell_1,\ell_2,\ell_3$ and
    $$\ell_4=\begin{pmatrix} 
     -69 -116 \omega -168r_1  -86r_2 & 14r_1 \\
-12  -12r_1 &  -69 -118\omega -170r_1  -86r_2
     \end{pmatrix}$$ and $$\ell_5= \begin{pmatrix} 
     136+ 232\omega +334r_1+ 170r_2 &  14 r_2 \\
-12 -12r_2 & 136 +230\omega+ 334r_1 +168r_2
     \end{pmatrix}.$$
 \end{example}

\subsection{The quadratic form on $\bar{\LG}$}
Since the quadratic form $q_{(\bar{A},\bar{\lambda})}$ coincides with $q$ on the sublattice $\Ends(A,\lambda)$, we still denote it by $q$ and by $b_q$ the associated bilinear form. However, $P^{-1} e_5$ or $P^{-1} e_6$ do not have necessarily a trace in $\Q$, so the explicit form of $q$ on $\Ends(\bar{A},\bar{\lambda})$ cannot be simplified as in Section~\ref{sec:quad}. We have for $\alpha \in \Ends(\bar{A},\bar{\lambda})$: $$q(\alpha)= \trd(\alpha^2)-\frac{1}{4} \trd(\alpha)^2,$$
where $\trd(\alpha)=\tr_{\Rc/\Q}(\tr(\alpha))$. Hence, for $\beta \in \bar{\LG}$, we have $q(\beta)=\trd(\beta^2)$.

\begin{example}
We continue with our example and the prime $\frakp=(17)$.    The Gram matrix of $\bar{\LG}$ in the basis $\ell_1,\ldots,\ell_5$ with respect to $q/4$ is equal to
$$\textrm{Gram}(\bar{\LG},q/4)= \begin{pmatrix}
  165 & -241&   317 & -598 & -247 \\
-241 &  353 & -463 &  874 &  361 \\
  317 & -463 &  613 &-1152 & -477 \\
 -598  & 874 & -1152  & 2188 &  862 \\
 -247 &  361&  -477  & 862 &  441
\end{pmatrix}.$$
It contains in its upper corner the $3\times3$ matrix $\Gram(\LG)$ in Example~\ref{ex:gram163}.
\end{example}

\subsection{An algorithm to test the decomposability at a prime $\frakp$}
Having all these explicit computations at hand, Algorithm~\ref{algo:primereduction} determines if a given $\frakp$ is a prime of decomposable reduction by looking for an element of norm 1 in the lattice $\bar{\LG}$ with positive definite Gram matrix of $q/4$. If this element exists, it is unique up to a sign.
 \begin{proposition} \label{prop:uniquel0}
     If there is an element $\ell_0 \in \bar{\LG}$ with $(q/4)(\ell_0)=1$ then $\ell_0$ is unique up to a sign.
 \end{proposition}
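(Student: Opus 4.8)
The plan is to combine the positive-definiteness of $q/4$ on $\bar{\LG}$ with the mod-$4$ congruence between $b_q$ and $q$ from Corollary~\ref{cor:bqintegral}. Suppose $\ell_0,\ell_0' \in \bar{\LG}$ both satisfy $(q/4)(\ell_0)=(q/4)(\ell_0')=1$; the goal is to show $\ell_0'=\pm\ell_0$. First I would observe that $(\bar{A},\bar{\lambda})$ is itself a principally polarised abelian surface, so everything in Section~\ref{sec:kani} applies to it verbatim: $q/4$ on $\bar{\LG}=\tau(\Ends(\bar{A},\bar{\lambda}))$ is positive definite (being isometric via $\tau^{-1}$ to $q_{(\bar{A},\bar{\lambda})}$ on $\LK$), $b_q/4$ is integral on $\bar{\LG}$, and $(b_q/4)(\ell_0,\ell_0')^2 \equiv (q/4)(\ell_0)\cdot(q/4)(\ell_0') \pmod 4$. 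With the hypothesis the right-hand side is $1$, so $(b_q/4)(\ell_0,\ell_0')$ is an odd integer and in particular $(b_q/4)(\ell_0,\ell_0')^2 \geq 1$.

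Then I would invoke Cauchy--Schwarz for the positive definite form $q/4$, which gives $(b_q/4)(\ell_0,\ell_0')^2 \leq (q/4)(\ell_0)\cdot(q/4)(\ell_0') = 1$. Comparing the two inequalities forces equality, and the equality case of Cauchy--Schwarz means $\ell_0$ and $\ell_0'$ are proportional over $\R$, say $\ell_0' = c\,\ell_0$; matching norms gives $c^2 = 1$, hence $\ell_0' = \pm\ell_0$, which is the claim.

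I do not anticipate any serious difficulty; the only point to handle carefully is that the congruence of Corollary~\ref{cor:bqintegral}(iii), stated there for $\Ends(A,\lambda)$ and $\LG$, transfers to the bigger lattice $\bar{\LG}$ of the special fiber — but this is immediate since $(\bar{A},\bar{\lambda})$ is a principally polarised abelian surface and $\bar{\LG}$ is its associated Gross lattice, so Section~\ref{sec:kani} applies directly. (An alternative, more hands-on route would use that such an $\ell_0 = \tau(\alpha)$ satisfies $\ell_0^2 = 1_{\End(\bar{A})}$, hence defines a splitting idempotent, and compare two such splittings of $\bar{E}^2$ into a product of elliptic curves directly; but the Cauchy--Schwarz argument is considerably shorter and I would take that one.)
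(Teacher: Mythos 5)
Your proof is correct and takes a genuinely different route from the paper's. The paper argues geometrically: it sets up a correspondence between elements $\ell_0 \in \bar{\LG}$ with $(q/4)(\ell_0)=1$ (up to sign) and isomorphisms $\phi : (\bar{A},\bar{\lambda}) \to (E_1 \times E_2, 1\times 1)$ (up to swapping factors and up to isomorphism of each $E_i$), and then appeals to the external uniqueness result \cite[Prop.\ 3.3]{gelin} for such a splitting of a principally polarised abelian surface. You instead give a self-contained arithmetic argument: apply Corollary~\ref{cor:bqintegral} to $(\bar{A},\bar{\lambda})$ to get the mod-$4$ congruence on the Kani lattice, transport it through the isometry $\tau$ to the statement $(b_q/4)(\ell_0,\ell_0')^2 \equiv (q/4)(\ell_0)(q/4)(\ell_0') \pmod 4$ on $\bar{\LG}$, conclude the pairing value is an odd integer, and clamp it between the Cauchy--Schwarz bound and the congruence to force proportionality. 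The one step worth making explicit is the transport: for $\beta_i=\tau(\alpha_i)$ one has $(q/4)(\beta_i)=q(\alpha_i)$ and, by the polarization identity and additivity of $\tau$, $(b_q/4)(\beta_1,\beta_2)=b_q(\alpha_1,\alpha_2)$, so the Corollary does indeed transfer literally; a reader who naively substitutes $q(\ell_i)=4$ into Corollary~\ref{cor:bqintegral}(iii) \emph{without} passing through the isometry would see only $0\equiv 0 \pmod 4$ and be confused. Your route avoids the reference to \cite{gelin}, is entirely lattice-theoretic, and the crucial ingredient — that $(b_q/4)(\ell_0,\ell_0')$ is a nonzero integer — neatly packages the geometric rigidity the paper derives from uniqueness of polarized splittings. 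Both proofs are valid; yours is more elementary given the machinery already developed in Section~\ref{sec:kani}.
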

 \begin{proof} As we have seen in the proof of Theorem~\ref{th:kaniq}, an endomorphism $\ell_0 \in \bar{\LG}$ with $q/4(\ell_0)=1$ leads to symmetric idempotents $e_1=(\ell_0+ 1_{\End(\bar{A}})/2$ and $e_2=1_{\End(\bar{A})}-e_1$ and to an isomorphism of principally polarised abelian surfaces $(E^2, \lambda)\rightarrow (\im(e_1)\times \im(e_2), \lambda_0)$ where $\lambda_0$ is the product polarization $1 \times 1$. Reciprocally, given such an isomorphism $\phi :\,(E^2, \lambda)\rightarrow (E_1\times E_2, \lambda_0)$, let us define $e_1=\phi^{-1} \circ \pi \circ \phi$ where $\pi \in \End(E_1 \times E_2)$ is defined by $(x,y) \to (x,0)$ and $e_2=1-e_1$. We see that $e_1^2=e_1$ and $e_1^{\dag} = \lambda^{-1} \circ \hat{e}_1 \circ \lambda =e_1$ since $\lambda = \hat{\phi} \lambda_0 \phi$ and $\lambda_0^{-1} \pi \lambda_0=\pi$. Let us define $\ell_0=\tau(e_1) \in \bar{\LG}$. Using Remark~\ref{rem:minpol}, since $e_1$ is symmetric and $e_1^2-e_1=0$ we see that $\Tr(e_1^2)=\Tr(e_1)=2$ and therefore $q/4(\ell_0)=1$. 
 In \cite[Prop. 3.3]{gelin}, it is proved that the isomorphism between $(E^2,\lambda)$ and $(E_1 \times E_2, 1 \times 1)$ is unique up to  interchanging $E_1$ and $E_2$, and up to isomorphism for each elliptic curve. The latter does not affect the $e_i$ and exchanging $E_1,E_2$ only exchanges the idempotents $\{e_1,e_2\}$ which only multiplies $\ell_0$ by $-1$. The result then follows. 
 \end{proof}

\begin{algorithm}
    \caption{Is $\frakp$ a prime of decomposable reduction for $(E^2,\lambda)$?}
\label{algo:primereduction}
\begin{algorithmic}[1]
    \Require A maximal order $\Oc=\Z[\omega]$ of discriminant $-d<0$ in an imaginary quadratic field $K$; a matrix $P \in M_2(\Oc)$ representing an indecomposable principal polarization $\lambda$ on $E^2$ where $E$ is an elliptic curve with CM by $\Oc$ defined over the Hilbert class field $H$ of $K$; $\frakp$ a prime ideal in $H$ over a prime $p$; $\Z_{\frakp}$ the ring of integers of $H_{\frakp}$. 
    \Ensure Return true if the stable model of $(E^2,\lambda_0 P)$ over $\Z_{\frakp}$ has a special fiber which is decomposable. 
    \State Check if $p$ is inert or if $p$ is ramified in $\Oc$ and $p \notin \{d/4,d\}$. If not return false.
    \State Compute the stable model $\Ec$ of $E$ over $\Z_{\frakp}$ and its reduction $\bar{E}$ over $\F_{p^2} \subset k_{\frakp}$.
    \State Compute the endomorphism ring  $\Rc:=\End(\bar{E})$ in $\Bc_{p,\infty}$.
    \State Compute the image of  $\omega \in \End(E)$ in $\Rc$ and a basis of $\Rc$ as $1,\omega,r_1,r_2$.
    \State Compute the image of $P$ in $M_2(\Rc)$. We still denote it $P$.
    \State Compute a basis $\ell_1,\ell_2,\ell_3$ of $\LG=\tau(\Ends(E^2,\lambda))$ by taking the images by $\tau$ of $P^{-1} e_i$ for $1 \leq i \leq 4$ where the $e_i$s are
      $$\begin{pmatrix} 1 & 0 \\ 0 & 0 \end{pmatrix}, \, \begin{pmatrix} 0 & 0 \\ 0 & 1 \end{pmatrix},\, \begin{pmatrix} 0 & 1 \\ 1 & 0 \end{pmatrix},\, 
      \begin{pmatrix} 0 & \omega \\ \bar{\omega} & 0 \end{pmatrix}.$$
    \State Compute a basis of $\bar{\LG}=\tau(\Ends(\bar{E}^2,\bar{\lambda}))$ by adding to $\ell_1,\ell_2,\ell_3$ the matrices  
 $$   \ell_4=   \tau\left(P^{-1} \begin{pmatrix} 0 & r_1 \\ \bar{r_i} & 0 \end{pmatrix}\right), \, 
  \ell_5=  \tau\left(P^{-1} \begin{pmatrix} 0 & r_2 \\ \bar{r_2} & 0 \end{pmatrix}\right).$$ 
  \State Compute the positive definite Gram matrix of $\bar{\LG}$ with respect to $q/4$ where $q/4(\beta)=\trd(\beta^2)$.
  \State \Return true if and only if the lattice $(\bar{L},q/4)$ contains an element $\ell_0$ of norm $1$.
\end{algorithmic}
\end{algorithm}

\begin{remark}\label{cased/4} Notice the special case $p=d/4$ (as said, $p=d$ cannot occur). In our experiments (for $d <200$), this case only shows up for $d=8$. In such a case, the residue field $k_{\frakp}=\F_p$, so  $\End(\bar{E})=\Oc$ and the polarization $\bar{\lambda}$ on $\bar{E}^2$ remains indecomposable. However, over $\F_{p^2}$, the endomorphism ring is $\Rc$ and there the polarization $\bar{\lambda}$ may decompose. This is what does happen for $d=8$ and $p=2$. 
\end{remark}

\begin{example} \label{ex:prime17}
Let us finish our running example.  
The endomorphism 
$$\ell_0=8 \ell_1-2\ell_2-\ell_3-2\ell_4-\ell_5=\begin{pmatrix}
-3 +2 \omega  + 2r_1  + 2r_2 & -14 -  20\omega-28r_1 -14r_2 \\
8 +16 \omega + 24 r_1+ 12r_2 &  7 +  4\omega +6 r_1 +4r_2
\end{pmatrix}$$ satisfies $q/4(\ell_0)=1$, so $\frakp=(17)$ is a prime of decomposable reduction, which is coherent with \eqref{eq:villegas163} as we will see in Section~\ref{sec:disc}. 
We defines the idempotent 
$$e_1=(\ell_0+\id_2)/2=
\begin{pmatrix}
-1 + \omega  + r_1  + r_2 & -7 -  10\omega-14r_1 -7r_2 \\
4 +8 \omega + 12 r_1+ 6r_2 &  4 +  2\omega +3 r_1 +2r_2
\end{pmatrix}.$$
Let us define now
$$
Q=\begin{pmatrix} -3 -7\omega-10r_1 -5r_2 & -1-2\omega-3r_1-2r_2\\  -4 -8\omega-12r_1-6r_2 & -3-2\omega-3r_1-2r_2
\end{pmatrix},
$$
where the first row is a linear combination of the rows of $e_1$ with entries of minimal norm and similarly for the second row with the rows of  $e_2=(\ell_0-\id_2)/2$. One has ${{Q}^\vee} Q=2P$ where $\vee$ denotes the dual endomorphism after identification of $\bar{E}$ and its dual. The matrix 
$$
{{Q}^\vee}=\begin{pmatrix} {Q}_{11}^\vee & {Q}_{21}^\vee\\   {Q}^\vee_{12} & {Q}^\vee_{22}
\end{pmatrix}=\begin{pmatrix} t_{11}s_1 & t_{12}s_2\\  t_{21}s_1 & t_{22}s_2
\end{pmatrix}=T\circ\operatorname{diag}(s_1,s_2),
$$
where $s_i:\,\bar{E}\rightarrow \bar{E}_i$ is a $2$-isogeny: ${Q}^\vee_{11} $ has norm $4$ and  $Q^\vee_{12}$ has norm $6$. By checking if $({Q}^\vee_{11})^2$, $({Q}_{12}^\vee)^2$ and ${Q}^\vee_{11}+{Q}^\vee_{12}$ factor through $[2]$, we conclude that they share a 2-torsion point in their kernel;  the endomorphisms ${Q}^\vee_{21}$ and ${Q}^\vee_{22}$ have both norm $8$ and ${Q}^\vee_{21}$ factors though $[2]$, so the same applies. Hence, $T:\,(\bar{E}_1\times \bar{E}_2,1\times 1)\rightarrow (\bar{E}^2,\lambda)$ is an isomorphism of principally polarised abelian surfaces:

\begin{center}
\begin{tikzcd}
\bar{E}^2  \arrow[dd,"2\lambda"] \arrow[rightarrow]{rr}{T^\vee} \arrow[rrrr, bend left=35, "Q"] &
  & \bar{E}_1\times \bar{E}_2 \arrow[rightarrow]{rr}{diag(s_1^\vee,s_2^\vee)}  \arrow[dd,"2(1\times1)"] 
 & & \bar{E}^2  \arrow[dd,"1\times1"]
      \\
      &{\circlearrowleft} & & \circlearrowleft & \\
 \bar{E}^2  \arrow[leftarrow]{rr}{T}  &
  & \bar{E}_1\times \bar{E}_2 \arrow[leftarrow]{rr}{diag(s_1,s_2)}  
 & & \bar{E}^2   \arrow[llll, bend right=-20, "Q^\vee"]
\end{tikzcd}
\end{center}

\end{example}

\section{Bounds on the primes of decomposable reduction}
We have seen in Section~\ref{sec:algoprime} that given a prime ideal $\frakp$ over $p$, we can determine whether $(E^2,\lambda)$ has decomposable reduction at $\frakp$. In the following section, we give an upper bound on the primes $p$ which can appear. Combined with Algorithm~\ref{algo:primereduction}, this will allow to find all primes $\frakp$ of decomposable reduction for a given $(E^2,\lambda)$. We resume with the notation of the previous section.

\subsection{The discriminant of $\Gram(\LK,q)$}
Although the basis $\ell_1,\ell_2,\ell_3$ of $\LG$ cannot be given by a close formula, it is possible to compute the determinant of its Gram matrix $\Gram(\LG,q/4)$ with respect to the quadratic form $q/4$. It is equal to the one of the isometric lattice $(\LK,q)$ for which one has the following result in the literature.

\begin{lemma} \label{lem:detL}
The determinant of the Gram matrix $\Gram(\LK,q)$ of $\LK$ with respect to the quadratic form $q$ is equal to $4d$.
\end{lemma}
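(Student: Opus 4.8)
The strategy is to compute the Gram matrix of $(\LK, q)$ directly in terms of the polarization data, by choosing an explicit basis of $\Ends(E^2,\lambda)$ and computing $q$ and $b_q$ on it using the simple formulas of Section~\ref{sec:quad}. Recall that $\Ends(E^2,\lambda) = P^{-1} \cdot \Ends(E^2,\lambda_0)$, and $\Ends(E^2,\lambda_0)$ has the $\Z$-basis $e_1,e_2,e_3,e_4$ from \eqref{eq:eis}. Since for $\alpha \in \Ends(E^2,\lambda)$ one has $q(\alpha) = \tr(\alpha)^2 - 4\det(\alpha)$ (with $\tr$, $\det$ the reduced trace and determinant of the $2\times 2$ matrix over $K$), and since $q$ descends to $\LK$ by quotienting out $\Z \cdot 1$, I would pick the three classes of $P^{-1}e_1 - P^{-1}e_2$, $P^{-1}e_3$, $P^{-1}e_4$ in $\LK$ (or equivalently the $\tau$-images generating $\LG$, using that $q|_{\LG} = 4\,(q/4)$ and the isometry $(\LG, q/4) \cong (\LK, q)$ already established).

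**Key computation.** Writing $P = \begin{pmatrix} a & b \\ \bar b & c\end{pmatrix}$ with $ac - b\bar b = 1$, one has $P^{-1} = \begin{pmatrix} c & -b \\ -\bar b & a\end{pmatrix}$. I would compute $P^{-1}e_i$ for $i=1,\dots,4$ explicitly, extract $\tr$ and $\det$ of each and of the relevant pairwise sums, and assemble the $3\times 3$ Gram matrix of $b_q$ on the chosen basis of $\LK$. The determinant of this matrix should, after simplification using $ac-b\bar b = 1$ and $\omega\bar\omega$, $\omega+\bar\omega$ expressed via $d$ (namely $\omega + \bar\omega \in \{0,1\}$ and $4\omega\bar\omega - (\omega+\bar\omega)^2 = d$, i.e. the different of $\O$ has norm $d$), collapse to $4d$. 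The point is that the answer is independent of $P$: the polarization change is an isometry-up-to-the-relevant-normalization on these lattices, or more prosaically, $P \in \SL$-type condition forces the determinant to only see the quadratic field.

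**Alternative route and main obstacle.** A cleaner alternative avoids choosing $P$ at all: since $\Ends(E^2, \lambda) \cong \NS(E^2)$ as lattices under the intersection pairing twisted by $\theta$ (Corollary~\ref{cor:bqintegral}(ii) gives $b_q(\alpha_1,\alpha_2) = (D_1.\theta)(D_2.\theta) - 2(D_1.D_2)$), and $\NS(E^2) = \NS(E\times E)$ has a standard description — $\Z^2 \oplus \End(E)$ with the pairing determined by degrees of the "graph" correspondences — one computes the discriminant of $(\LK, q)$ from the discriminant of $\End(E) = \O$, which is $-d$. Either way, the main obstacle is bookkeeping: correctly normalizing the quotient by $\Z \cdot 1$ (the factor $2g = 4$ appearing in $q$ versus the factor $g^2 = 4$ relating $q|_{\LG}$ and $q/4$ must not be double-counted), and verifying the claimed value $4d$ rather than $d$ or $16d$. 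I expect the honest check is to run the first route once for a small explicit $P$ (e.g. the running example $d=163$, $P = [6,7,\omega]$, where Example~\ref{ex:gram163} already exhibits a Gram matrix whose determinant one can read off), confirm it equals $4 \cdot 163$, and then observe the general formula. Since the statement only asserts a number and the lattice $(\LK, q)$ is the refined Humbert invariant of $(E^2, \lambda)$ which Kani has studied, I would cite \cite{kanihumbert} (or \cite{KaniI}) for the value $4d$ and supplement with the short direct verification above.
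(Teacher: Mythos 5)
Your ``alternative route'' --- working through $\NS(E^2)$, its standard description in terms of $\End(E)$, and a citation to Kani --- is in fact close in spirit to the paper's proof, which applies \cite[Prop.~9 and Cor.~24]{kanihumbert} to reduce the determinant of $\Gram(\LK,q)$ to $-4\det(\NS(E^2),(.))$ and then to $\det(\End(E),\beta)$, which is computed by hand on the basis $\{1,\omega\}$ to be $d$. So the idea of leaning on Kani's identification and a two-line computation with $\tr(\omega)$ and $\omega\bar\omega$ is the right one, and is exactly what the paper ends up doing.

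However, there are two concrete problems with your write-up. First, your explicit basis claim in the ``key computation'' is wrong: the classes of $P^{-1}e_1-P^{-1}e_2$, $P^{-1}e_3$, $P^{-1}e_4$ never form a $\Z$-basis of $\LK=\Ends(E^2,\lambda)/\Z\cdot 1$. Under the isomorphism $\alpha\mapsto P^{-1}\alpha$ the identity $1_{\End(A)}$ pulls back to $P=ae_1+ce_2+b_1e_3+b_2e_4$ (writing $b=b_1+b_2\omega$), so $\LK\cong\Z^4/\Z(a,c,b_1,b_2)$, and the sublattice generated by the images of $e_1-e_2,e_3,e_4$ has index $|a+c|$. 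Since $P$ is positive definite with $\det P=1$ and is indecomposable, $a+c\geq 2\sqrt{ac}=2\sqrt{1+|b|^2}>2$, so the index is always $\geq 3$; your Gram determinant would come out as $(a+c)^2\cdot 4d$, not $4d$. One must instead use an actual basis of the quotient (e.g.\ via Smith normal form, as the paper does for $\LG$). Second, the fallback of ``citing \cite{kanihumbert} for the value $4d$'' is not as clean as you suggest: Kani's bilinear form is normalized without the factor $\tfrac12$, and a naive reading of his Prop.~9 would yield $32d$. The paper flags this explicitly in a footnote and adjusts the constant. So a bare citation would give the wrong answer unless one tracks the normalization; the determinant is indeed $4d$, but getting the $4$ rather than $32$ or $d$ is exactly the bookkeeping you identify as the ``main obstacle,'' and the proposal does not actually resolve it.
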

\begin{proof}
We use \cite[Prop.9]{kanihumbert} which shows\footnote{Actually Kani's proof would lead to $32 d$ since he defines the bilinear form attached to $q$ without the multiplicative factor $\frac{1}{2}$. Therefore, with his notation, we must replace in his proof $ \beta_q(y_1,y_2)=4\beta(y_1,y_2)$ by $\beta_q(y_1,y_2)=2 \beta(y_1,y_2)$.} that $$\det \Gram(\LK,q)= \frac{1}{2} (-2)^{\rk(\NS(A))-1} \det(\NS(A),(.))= -4 \det(\NS(A),(.)).$$
It remains to compute $\det(\NS(A),(.))$ using \cite[Cor.24]{kanihumbert}:
$$\det(\NS(E^2),(.))= (-1)^{\rk(\NS(A))-1} \cdot \det(\End(E),\beta),$$ where $\beta(x,y)=\deg(x+y)-\deg(x)-\deg(y)$ for $x,y \in \End(E)$. We perform an explicit computation on the basis $\{1,\omega\}$ of $\End(E)$ and we see that the last determinant is equal to 
$$\det \begin{pmatrix} 2 & \tr(\omega) \\ \tr(\omega) & \omega \bar{\omega} \end{pmatrix} = d.$$
\end{proof}

\subsection{A first bound}

Let $\Pc$ be the unique two-sided prime ideal of $\Rc$ over $p$ with residual degree $2$ (see \cite[Th.18.1.3]{QAVoight}). Mimicking \cite[Lem.5.7]{kudla}, we denote by $\res$ the restriction map which goes from $\End(\bar{A})=M_2(\Rc)$ to $M_2(\Rc/\Pc)=M_2(\F_{p^2})$. This could be interpreted as the restriction map to the kernel of $p$-power Frobenius but we simply look at it at the level of matrices. 

We will  reduce our quadratic form $q$ modulo a prime $p$. As we want to include $p=2$, it is easier to switch from the description in terms of Gross lattices and $q/4$ to Kani lattices and $q$. Remember that the two lattices are isometric, in particular their Gram matrices are equivalent.
 \begin{lemma} \label{lem:red}
Let $\bar{L}^{Kani}: =\{\Ends(\bar{A},\bar{\lambda})\}/\Z \cdot 1_{\End(\bar{A})}$. The $\F_p$-vector space $\res(\bar{L}^{Kani})$ has dimension $3$.
\end{lemma}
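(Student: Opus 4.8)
The plan is to understand the kernel of the restriction map $\res$ on $\bar L^{Kani}$ and show it is one-dimensional over $\F_p$, so that a $4$-dimensional $\F_p$-space maps onto a $3$-dimensional one. Here $\bar L^{Kani}$ has $\Z$-rank $4$ (the Gross lattice $\bar\LG$ has rank $5$, the kernel of $\tau$ is rank $1$), so $\bar L^{Kani} \otimes \F_p$ has dimension $4$. The reduction $\res$ is induced by $\Rc \to \Rc/\Pc \cong \F_{p^2}$, applied entrywise to matrices in $M_2(\Rc)$, and then passing to the quotient by $1_{\End(\bar A)}$; one must check it descends to the quotient, which is clear since $1 \bmod \Pc$ is still the identity.

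First I would identify $\res(\bar L^{Kani})$ with the image of $\Ends(\bar A,\bar\lambda)\otimes\F_p$ (the symmetric matrices $M_\alpha \in M_2(\Rc/\Pc)$ satisfying the symmetry condition ${}^t\bar\alpha P = P\alpha$) modulo the scalar line. The key point is that $\Rc/\Pc = \F_{p^2}$ carries the quaternion conjugation, which reduces to the nontrivial Galois automorphism $x \mapsto x^p$ of $\F_{p^2}/\F_p$ (the reduced trace and norm modulo $p$ become the $\F_{p^2}/\F_p$ trace and norm). So after reduction the symmetric endomorphisms become $2\times 2$ matrices over $\F_{p^2}$ that are ``hermitian'' for this conjugation twisted by the reduction $\bar P$ of $P$, which has determinant $1$ in $\F_{p^2}$ and hence is invertible. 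Conjugating by $\bar P$ one is reduced to honest $\bar P$-twisted hermitian $2\times 2$ matrices over $\F_{p^2}$, and the $\F_p$-space of these has dimension $4$ (two diagonal entries in $\F_p$, one off-diagonal entry in $\F_{p^2}$). Modulo the identity this is $3$-dimensional, so $\res(\bar L^{Kani})$ has dimension at most $3$.

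For the lower bound (that the dimension is exactly $3$, i.e. $\res$ restricted to $\bar L^{Kani}\otimes \F_p$ has one-dimensional kernel) I would argue that the rank-$4$ module $\bar L^{Kani}$ already surjects onto this $3$-dimensional target: by the previous paragraph the reduction map $\Ends(\bar A,\bar\lambda)\otimes\F_p \to \{\bar P\text{-hermitian } 2\times 2 \text{ over }\F_{p^2}\}$ is surjective (it is the reduction of an isomorphism of $\Z$-modules $\Ends(\bar A,\bar\lambda)\cong \{P\text{-hermitian over }\Rc\}$, and reduction mod $p$ of a surjection of free modules of the same rank is a surjection), hence the induced map on quotients by the identity is surjective. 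Since source and target have $\F_p$-dimension $4$ and $3$ respectively, the kernel is exactly one-dimensional; equivalently $\dim_{\F_p}\res(\bar L^{Kani}) = 3$.

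The main obstacle, and the step requiring care, is the bookkeeping at $p=2$ and in the ramified case: one must confirm that $\bar P$ is still invertible over $\F_{p^2}$ (it is, since $\det P = 1$), that the conjugation on $\Rc/\Pc$ is genuinely the $\F_{p^2}/\F_p$-Frobenius (this is the content of $\Pc$ having residual degree $2$, cf. \cite[Th.18.1.3]{QAVoight}, which is exactly why one works with the inert/appropriately-ramified primes so that $k_\frakp = \F_{p^2}$), and that passing to the quotient by the scalar line does not create collapse when $p=2$ — but the identity reduces to a nonzero element of the $4$-dimensional hermitian space for every $p$, so the dimension count $4-1=3$ is uniform. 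I would also double-check that $\res$ is well defined on $\bar L^{Kani}$ and not just on $\bar\LG$, which is immediate since $\res(1_{\End(\bar A)})$ is the identity of $M_2(\F_{p^2})$.
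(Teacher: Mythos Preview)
Your overall strategy is sound and close to the paper's, but there is a bookkeeping error in the first paragraph: $\Ends(\bar A,\bar\lambda)$ has $\Z$-rank $6$ (basis $P^{-1}e_1,\ldots,P^{-1}e_6$), so $\bar L^{Kani}=\Ends(\bar A,\bar\lambda)/\Z\cdot 1$ has rank $5$, not $4$. Consequently your claim that the kernel of $\res$ on $\bar L^{Kani}\otimes\F_p$ is one-dimensional is wrong (it is two-dimensional), and the phrase ``reduction of an isomorphism of $\Z$-modules \ldots of the same rank'' does not parse as written. Fortunately the lemma only asks for the dimension of the \emph{image}, and your surjectivity argument onto the $4$-dimensional $\F_p$-space of $\bar P$-hermitian $2\times 2$ matrices over $\F_{p^2}$ (then quotienting by the identity to get $3$) is correct and independent of the rank miscount.

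The paper's proof is the same idea stripped to a computation: rather than invoking the hermitian structure abstractly, it observes that since $\det P=1$ the map $\alpha\mapsto P\alpha$ identifies $\res(\Ends(\bar A,\bar\lambda))$ with $\res(\Ends(\bar A,\lambda_0))$, and then simply notes that $\res(e_1),\ldots,\res(e_6)$ span a $4$-dimensional $\F_p$-subspace of $M_2(\F_{p^2})$ (two diagonal generators $e_1,e_2$, and among $e_3,\ldots,e_6$ only two are $\F_p$-independent since $\Rc/\Pc$ has $\F_p$-dimension $2$). Your identification of the quaternion conjugation modulo $\Pc$ with the Frobenius of $\F_{p^2}/\F_p$ is correct and makes explicit why the target is ``hermitian'', which the paper leaves implicit; but the paper's route avoids having to justify surjectivity onto that hermitian space.
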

\begin{proof}
The dimension of the $\F_p$-vector space $\res(\bar{L}^{Kani})$ is one less than the dimension of $\res(\Ends(\bar{A},\bar{\lambda}))$. Since the matrix $P$ has integral coefficients and determinant $1$, the dimension of the latter is the same as the dimension of $\res(\Ends(\bar{A},{\lambda}_0))$. Since $\Rc/\Pc=\F_{p^2}$, it is then easy to see that $\res(e_1),\ldots,\res(e_6)$ generate a $\F_p$-vector space of dimension 4 inside $M_2(\F_{p^2})$.
\end{proof}

 According to Lemma~\ref{cor:bqintegral},
$b_q$ is integral so we can reduce it modulo $p$ and define a quadratic form $\bar{b}_q$ over $\F_p$. The trace operations on matrices and on elements of $\Rc$ are compatible with the reduction modulo $\Pc$ so we get 
\begin{equation} \label{eq:qqtilde}
    b_q(\alpha_1,\alpha_2) \pmod{p} = \bar{b}_q(\res(\alpha_1),\res(\alpha_2)).
\end{equation}

We can now formulate the following necessary condition for primes of decomposable reduction.
\begin{proposition} \label{prop:cond1}
If $(A,\lambda)$ has decomposable reduction at a prime $\frakp$ over $p$ then there exists $(x,y,z) \in \Z^3$ such that the matrix 
\begin{equation} \label{eq:Tmatrix}
T =\begin{pmatrix} \Gram(\LK) & \begin{array}{c}x \\ y \\ z\end{array}\\\begin{array}{ccc}x & y & z\end{array} & 1\end{pmatrix}
\end{equation}
is positive definite (equivalently has positive determinant) and $p | \det(T)$.
\end{proposition}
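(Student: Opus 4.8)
Suppose $(A,\lambda)$ has decomposable reduction at $\frakp$. By Theorem~\ref{th:kaniq} applied to $(\bar A,\bar\lambda)$, there is an element $\bar\ell_0 \in \bar L^{Kani} = \Ends(\bar A,\bar\lambda)/\Z\cdot 1$ with $q(\bar\ell_0)=1$; since $\lambda$ itself is indecomposable, no such element lies in $L^{Kani}$, so $\bar\ell_0 \notin L^{Kani}$. Set $\Lambda := L^{Kani} + \Z\bar\ell_0 \subseteq \bar L^{Kani}$, a rank-$4$ sublattice (rank $4$ because $\bar L^{Kani}$ has rank $5$ but we only adjoin one vector to the rank-$3$ lattice $L^{Kani}$; independence of $\bar\ell_0$ from $L^{Kani}\otimes\Q$ is exactly the statement $\bar\ell_0\notin L^{Kani}$, using that $L^{Kani}$ is saturated in $\bar L^{Kani}\otimes\Q$, which one should check from the construction of the basis in Lemma~\ref{lem:LGbase}). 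The Gram matrix of $\Lambda$ with respect to $q$, in the basis $\ell_1,\ell_2,\ell_3,\bar\ell_0$, is exactly a matrix of the shape \eqref{eq:Tmatrix}, with $x = b_q(\ell_1,\bar\ell_0)$, $y=b_q(\ell_2,\bar\ell_0)$, $z=b_q(\ell_3,\bar\ell_0)$; these are integers by Corollary~\ref{cor:bqintegral}(ii), and the bottom-right entry is $q(\bar\ell_0)=1$. Since $q$ is positive definite on $\bar L^{Kani}$ (as noted after the definition of $q$), $T$ is positive definite, equivalently $\det T>0$.

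It remains to show $p \mid \det T$. This is the heart of the argument and follows \cite[Lem.5.7]{kudla} via the reduction map $\res$. Reducing $T$ modulo $p$ gives the Gram matrix of the reduced quadratic form $\bar b_q$ on $\res(\Lambda) \subseteq \res(\bar L^{Kani})$, using the compatibility \eqref{eq:qqtilde}. By Lemma~\ref{lem:red}, $\dim_{\F_p}\res(\bar L^{Kani}) = 3$, so $\res(\Lambda)$ has $\F_p$-dimension at most $3$; hence the four reduced basis vectors $\res(\ell_1),\res(\ell_2),\res(\ell_3),\res(\bar\ell_0)$ are $\F_p$-linearly dependent. A Gram matrix of a tuple of linearly dependent vectors is singular, so $\det T \equiv 0 \pmod p$, i.e. $p\mid\det T$.

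The main obstacle is the rank/saturation bookkeeping in the first paragraph: one must be sure that adjoining $\bar\ell_0$ to the rank-$3$ lattice $L^{Kani}$ genuinely produces a rank-$4$ lattice whose Gram matrix has the displayed block form with $\Gram(L^{Kani})$ literally in the top-left corner — i.e. that $\ell_1,\ell_2,\ell_3$ remain a basis of a direct summand and $\bar\ell_0$ is independent of them over $\Q$. The first point is automatic since $\ell_1,\ell_2,\ell_3$ are a $\Z$-basis of $L^{Kani}$ and $L^{Kani}\hookrightarrow\bar L^{Kani}$; the second is precisely the assertion $\bar\ell_0\notin L^{Kani}\otimes\Q=L^{Kani}$ (the lattice $L^{Kani}$ being saturated in $\bar L^{Kani}$ because it is cut out as the sublattice of endomorphisms defined over the generic point — equivalently by the basis completion of Lemma~\ref{lem:LGbase}), which we already deduced from indecomposability of $\lambda$. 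Everything else is routine: integrality of the off-diagonal entries is Corollary~\ref{cor:bqintegral}, positive-definiteness is inherited from $\bar L^{Kani}$, and the divisibility is the dimension-drop argument just sketched. The statement "equivalently has positive determinant" for a symmetric matrix with positive-definite leading principal minor of corank $1$ is standard (Sylvester's criterion: all leading principal minors of $T$ except possibly $\det T$ are those of $\Gram(L^{Kani})$, hence positive, so $T$ is positive definite iff $\det T>0$).
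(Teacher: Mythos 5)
Your proof is correct and follows essentially the same route as the paper's: Theorem~\ref{th:kaniq} produces $\ell_0\in\bar L^{Kani}$ of norm one, indecomposability together with saturation of $\LK$ in $\bar L^{Kani}$ (Lemma~\ref{lem:LGbase}) forces $\ell_0\notin\LK\otimes\Q$, positive definiteness follows from positivity of $q$ on $\bar L^{Kani}$, and $p\mid\det T$ comes from the dimension drop of Lemma~\ref{lem:red} combined with the reduction compatibility \eqref{eq:qqtilde}. The only cosmetic blemish is the display "$L^{Kani}\otimes\Q=L^{Kani}$", which should read $\bar L^{Kani}\cap(\LK\otimes\Q)=\LK$; the surrounding reasoning makes clear you meant the latter.
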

\begin{proof}
 By Theorem~\ref{th:kaniq}, there exists $\ell_0 \in \bar{L}^{Kani}=\Ends(\bar{A},\bar{\lambda})/\Z \cdot 1_{\End(\bar{A})}$ with $q(\ell_0)=1$. However, since $\lambda$ is indecomposable, there is no endomorphism in $\LK$ of norm $1$ so $\ell_0 \notin \LK$.  The vector $\ell_0$ cannot even be in $\LK \otimes \Q$. Indeed $\bar{L}^{Kani} \cap (\LK\otimes \Q) = \LK$ since $e_5,e_6$ are $\Q$-linearly independent of $e_1,e_2,e_3,e_4$.
 
The Gram matrix $T$ of the lattice $\LK \oplus \langle \ell_0 \rangle$ is therefore  positive definite. On the other hand, by Lemma~\ref{lem:red}, we know that the $\F_p$-span of $\res(\LK \oplus \langle \ell_0 \rangle)$ has dimension at most $3$. This implies that the Gram matrix of $\res(\LK \oplus \langle \ell_0 \rangle)$ for the quadratic form $\bar{b}_q$ has determinant $0$. By \eqref{eq:qqtilde}, we see that $T \pmod{p}=\Gram(\LK \oplus \langle \ell_0 \rangle,b_q) \pmod{p} = \Gram(\res(\LK \oplus \langle \ell_0 \rangle),\bar{b}_q)$ and this allows to conclude that  $p | \det(T)$.
\end{proof}

This proposition leads to a finite list of possible $p$'s using the following folklore lemma.
\begin{lemma} \label{lem:Qmat}
    Let $T=\begin{pmatrix} S & m \\ {^t m} & 1 \end{pmatrix} \in M_n(\R)$ with $S>0$, $m \in \R^{n-1}$ and $n \geq 2$. Then $T>0$ if and only if $\det T= \det S - Q(m)>0$ where $Q$ is the positive definite quadratic form associated to the adjugate matrix of $S$.
\end{lemma}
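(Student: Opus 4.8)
The plan is to reduce everything to one block factorization of $T$ coming from the Schur complement of the block $S$, which is invertible since $S>0$. Using that $S$ (hence $S^{-1}$) is symmetric, I would first record the congruence
\[
T = {}^{t}N \begin{pmatrix} S & 0 \\ 0 & 1 - {}^{t}m\, S^{-1} m \end{pmatrix} N,
\qquad
N = \begin{pmatrix} I & S^{-1} m \\ 0 & 1 \end{pmatrix},
\]
with $N$ invertible. Taking determinants gives $\det T = \det S\cdot\bigl(1 - {}^{t}m\, S^{-1} m\bigr)$, and substituting $S^{-1} = \ad(S)/\det S$ turns this into $\det T = \det S - {}^{t}m\,\ad(S)\,m = \det S - Q(m)$, which is the asserted formula. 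Since $\ad(S) = \det(S)\,S^{-1}$ is positive definite (as $S>0$ and $\det S>0$), the quadratic form $Q$ attached to $\ad(S)$ is indeed positive definite, consistently with the statement.

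For the equivalence I would reuse the same factorization: because $N$ is invertible, $T$ is congruent to $\operatorname{diag}\bigl(S,\ 1 - {}^{t}m\, S^{-1} m\bigr)$, so $T>0$ if and only if this diagonal matrix is positive definite, and — given $S>0$ already — that holds precisely when the scalar $1 - {}^{t}m\, S^{-1} m$ is positive. Multiplying this inequality by $\det S>0$ and invoking the determinant identity, $1 - {}^{t}m\, S^{-1} m>0 \iff \det S\cdot\bigl(1 - {}^{t}m\, S^{-1} m\bigr) = \det T>0$. Chaining the two equivalences yields $T>0 \iff \det T = \det S - Q(m)>0$.

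There is essentially no obstacle here; the only points requiring care are that the factorization needs $S$ invertible (ensured by $S>0$) and that $\det S>0$ (again from $S>0$) is exactly what lets one pass between the sign of $\det T$ and the sign of the Schur complement. If one prefers to avoid writing the congruence explicitly, an alternative is to observe that $S$ is a principal submatrix of $T$ of corank $1$ whose leading principal minors are all positive, so by Sylvester's criterion $T>0$ is equivalent to the single remaining condition $\det T>0$; the block determinant computation is still needed to identify $\det T$ with $\det S - Q(m)$.
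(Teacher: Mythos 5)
Your proof is correct, and it takes a cleaner route than the paper. The paper left-multiplies $T$ by $\operatorname{diag}(\operatorname{adj}(S),1)$, row-reduces the resulting matrix to upper-triangular form, reads off its determinant as $\det(S)^{n-2}(\det S - Q(m))$, and then divides out $\det\operatorname{adj}(S)=\det(S)^{n-2}$; the opening assertion ``$T>0$ iff $\det T>0$'' is left unjustified (it amounts to Sylvester's criterion, since all leading principal minors of $S$ are positive). You instead use the Schur complement congruence $T = {}^{t}N\,\operatorname{diag}(S,\ 1-{}^{t}m\,S^{-1}m)\,N$, which does double duty: the determinant identity drops out immediately (no auxiliary adjugate factor to cancel), and because $N$ is invertible the congruence preserves positive definiteness, so the equivalence $T>0 \iff 1-{}^{t}m\,S^{-1}m>0 \iff \det T>0$ is fully justified rather than asserted. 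Your proof is thus both more economical and more complete than the paper's; your closing remark that one could alternatively invoke Sylvester's criterion is precisely the gap the paper tacitly leaves.
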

\begin{proof}
   We have $T>0$ if and only if $\det(T)>0$. Now 
    \begin{eqnarray*}
    \det \left(\begin{pmatrix} \textrm{adj}(S) & 0 \\ 0 & 1 \end{pmatrix} \cdot T\right) &= &
    \det \begin{pmatrix} \det(S) \textrm{Id}_{n-1} & \textrm{adj}(S) m \\ {^t m} & 1 \end{pmatrix} \\
    &= & \det \begin{pmatrix} \det(S) & 0 & 0 & * \\ 0  & \ddots & 0 & * \\ 0 & 0 & \det(S) & * \\ 0 & 0 & 0 & 1-Q(m)/\det(S) \end{pmatrix} \\
    &=&  \det(S)^{n-2} (\det(S)-Q(m))
    \end{eqnarray*}
by cancelling the $m$'s coefficients of the last row. Since $\det \textrm{adj}(S) = \det(S)^{n-2}$, we get the announced result.
\end{proof}

\begin{proposition}[intermediary bound]
    If $A$ has decomposable reduction at a prime $\frakp$ over $p$ then $p < 4d$.
\end{proposition}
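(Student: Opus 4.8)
The plan is to extract the bound directly from the necessary condition in Proposition~\ref{prop:cond1} together with the determinant computation of Lemma~\ref{lem:detL}. First I would note that, assuming $A$ has decomposable reduction at $\frakp\mid p$, Proposition~\ref{prop:cond1} supplies $(x,y,z)\in\Z^3$ for which the symmetric matrix $T$ of \eqref{eq:Tmatrix} is positive definite and $p\mid\det T$. Since $T$ has integer entries (its diagonal and off-diagonal entries are values of $q$ and $b_q$ on $\LK\oplus\langle\ell_0\rangle$, which are integers by Corollary~\ref{cor:bqintegral}, and the last row and column consist of $x,y,z,1$), the number $\det T$ is a positive integer, so already $p\le\det T$.

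Next I would apply the folklore Lemma~\ref{lem:Qmat} with $S=\Gram(\LK,q)$ and $m={}^{t}(x,y,z)$, which yields $\det T=\det S-Q(x,y,z)$, where $Q$ is the positive definite quadratic form attached to $\ad(S)$. Because $Q$ is positive definite we have $Q(x,y,z)\ge 0$, and by Lemma~\ref{lem:detL} we have $\det S=4d$; hence $\det T\le 4d$ and therefore $p\le\det T\le 4d$.

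Finally I would upgrade the weak inequality $p\le 4d$ to the strict one $p<4d$. This is immediate: if $p=4d$ then the chain $p\le\det T\le 4d$ forces $\det T=4d$, hence $Q(x,y,z)=0$ and $(x,y,z)=(0,0,0)$ since $Q$ is positive definite; but in any case $p=4d$ is impossible, because $4d$ is divisible by $4$ (and, an indecomposable principal polarization on $E^2$ requiring $d>4$, in fact $4d>16$), so $4d$ is composite. Hence $p<4d$.

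I do not expect any real obstacle in this argument: the inequality is a formal consequence of the pieces already assembled. The only points deserving a moment's care are bookkeeping — one must work with the Gram matrix of $q$ on $\LK$, equivalently of $q/4$ on the Gross lattice $\LG$, both of determinant $4d$ — and the trivial observation turning $\le$ into $<$. The genuinely harder task, namely pushing the bound all the way down to the sharp $p\le d/4$, is deferred to Theorem~\ref{th:finitep}, where one additionally imposes the $\bmod\ 4$ congruences relating $x,y,z$ to the entries of $\Gram(\LK)$ coming from Corollary~\ref{cor:bqintegral}, which cut down the admissible matrices $T$ drastically.
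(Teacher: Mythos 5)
Your argument is correct and follows the same route as the paper: apply Proposition~\ref{prop:cond1} to get $T$ with $p\mid\det T$, compute $\det T=\det\Gram(\LK,q)-Q(m)=4d-Q(m)\le 4d$ via Lemmas~\ref{lem:Qmat} and~\ref{lem:detL}, and conclude. You are in fact slightly more careful than the paper at the last step: the paper passes directly from $p\mid\det T\le 4d$ to $p<4d$, whereas you explicitly rule out $p=4d$ by observing that $4d$ is composite, which is the clean way to justify the strict inequality.
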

\begin{proof}
     If $A$ has decomposable reduction at $\frakp$  then let $T \in M_4(\Z)$ be one of the matrix from Proposition~\ref{prop:cond1}. By  Lemma~\ref{lem:detL}, we know that $p  | \det(T) = 4d-Q(m) \leq 4d$ and hence, the prime $p$ must be smaller than $4d$.
\end{proof}

\subsection{An improved bound}
A formal computation with a software using Corollary~\ref{cor:bqintegral}~\eqref{cor:cong} shows the following:

\begin{lemma} \label{lem:Qcong}
Let $(A,\lambda)$ be an abelian surface over an algebraically closed field $k$ and $\ell_1, \ell_2,\ell_3$ and $\ell_0$ in $\Ends(A,\lambda)$ with $q(\ell_0) \equiv 1 \pmod{4}$. 
Then $$\det \Gram(\langle \ell_1,\ell_2,\ell_3,\ell_0 \rangle,q) \equiv 0 \pmod{16}.$$
\end{lemma}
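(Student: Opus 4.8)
The plan is to bring the $4\times 4$ Gram matrix $T=\bigl(b_q(\ell_i,\ell_j)\bigr)_{i,j\in\{0,1,2,3\}}$ into a convenient normal form by a unimodular change of the generating set (which leaves $\det T$ unchanged) and then read off the $2$-adic valuation. The only inputs needed are the three assertions of Corollary~\ref{cor:bqintegral}: $q$ is $\Z$-valued with $q(\alpha)\equiv 0$ or $1\pmod 4$, $b_q$ is $\Z$-valued, and $b_q(\alpha_1,\alpha_2)^2\equiv q(\alpha_1)q(\alpha_2)\pmod 4$ for all $\alpha_1,\alpha_2\in\Ends(A,\lambda)$. (If $\ell_1,\ell_2,\ell_3,\ell_0$ fail to be $\Z$-independent, then $\det T=0$ and there is nothing to prove; in any case what follows manipulates only the matrix $T$, so the degenerate case causes no trouble.)

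\emph{Normalization.} Since $q(\ell_0)\equiv 1\pmod 4$, for every $i\in\{1,2,3\}$ with $q(\ell_i)$ odd — equivalently $q(\ell_i)\equiv 1\pmod 4$ by Corollary~\ref{cor:bqintegral} — the congruence gives $b_q(\ell_i,\ell_0)^2\equiv 1\pmod 4$, so $b_q(\ell_i,\ell_0)$ is odd and hence
\[
q(\ell_i-\ell_0)=q(\ell_i)+q(\ell_0)-2\,b_q(\ell_i,\ell_0)\equiv 1+1-2\equiv 0 \pmod 4 .
\]
Replacing each such $\ell_i$ by $\ell_i-\ell_0$ — a substitution whose transition matrix is unipotent, so $\det T$ is unchanged — we may assume $q(\ell_i)\equiv 0\pmod 4$ for $i=1,2,3$, while still $q(\ell_0)\equiv 1\pmod 4$. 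Feeding this back into Corollary~\ref{cor:bqintegral} gives $b_q(\ell_i,\ell_j)^2\equiv 0\pmod 4$ for $i,j\in\{1,2,3\}$ and $b_q(\ell_i,\ell_0)^2\equiv 0\pmod 4$; so, writing
\[
T=\begin{pmatrix} A & b\\ b^{\mathsf T} & c\end{pmatrix},\qquad A=\bigl(b_q(\ell_i,\ell_j)\bigr)_{1\le i,j\le 3},\quad b=\bigl(b_q(\ell_i,\ell_0)\bigr)_{1\le i\le 3},\quad c=q(\ell_0),
\]
every entry of $A$ and of $b$ is even, the diagonal entries of $A$ are in fact $\equiv 0\pmod 4$, and $c$ is odd.

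\emph{The determinant.} Expanding along the last row (the Schur-complement identity, a variant of Lemma~\ref{lem:Qmat} with bottom-right entry $c$ in place of $1$), $\det T=c\,\det A-b^{\mathsf T}\ad(A)\,b$. Write $A=2A'$ and $b=2b'$ with $A',b'$ integral. Then $\det A=8\det A'$; moreover $A'$ is symmetric with even diagonal, so $A'\bmod 2$ is an alternating $3\times 3$ matrix over $\F_2$, hence of even rank and therefore singular, so $\det A'$ is even and $\det A\equiv 0\pmod{16}$, whence $c\,\det A\equiv 0\pmod{16}$. On the other hand each entry of $\ad(A)$ is a $2\times 2$ minor of $A$, i.e.\ a difference of products of two even integers, so $\ad(A)\in 4M_3(\Z)$ and thus $b^{\mathsf T}\ad(A)\,b=(2b')^{\mathsf T}\ad(A)(2b')\in 16\Z$. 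Therefore $\det T\equiv 0\pmod{16}$, and undoing the unimodular change of the generating set proves the lemma.

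\emph{On the difficulty.} There is no real obstacle; the argument is elementary once one notices that subtracting $\ell_0$ clears the odd diagonal entries. The two points that call for a little care are the bookkeeping showing the change of generating set has determinant $1$ (and the handling of the degenerate case), and the elementary fact that a symmetric $3\times 3$ integer matrix with even diagonal has even determinant. As the authors indicate, the statement can instead be confirmed by a finite computation over $\Z/16$ using the congruences of Corollary~\ref{cor:bqintegral}, but the normalization argument above is shorter and makes transparent why the modulus is exactly $16$.
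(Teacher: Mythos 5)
Your proof is correct, and it is a genuine, human-readable argument where the paper contents itself with the remark that ``a formal computation with a software using Corollary~\ref{cor:bqintegral}~\eqref{cor:cong} shows the following'' — i.e.\ the authors simply verified the statement by enumerating residues modulo~$16$ by machine. Your route buys an actual explanation: the normalization $\ell_i\mapsto\ell_i-\ell_0$ (a unipotent, hence determinant-preserving, change of generating set) is legal precisely because Corollary~\ref{cor:bqintegral}~(iii) forces $b_q(\ell_i,\ell_0)$ to be odd whenever $q(\ell_i)$ is, and it reduces the problem to a block matrix $\left(\begin{smallmatrix}A&b\\b^{\mathsf T}&c\end{smallmatrix}\right)$ with $A\in 2M_3(\Z)$ having diagonal in $4\Z$, $b\in 2\Z^3$ and $c$ odd. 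From there the Schur-complement identity $\det T=c\det A-b^{\mathsf T}\ad(A)\,b$ together with two elementary observations — that a symmetric $3\times 3$ integer matrix with even diagonal has even determinant (its reduction mod $2$ is alternating, hence singular in odd size), and that the $2\times 2$ minors of an even matrix lie in $4\Z$ — gives $16\mid c\det A$ and $16\mid b^{\mathsf T}\ad(A)\,b$ separately. Each step is visibly sharp, so the argument also explains why the modulus is exactly~$16$, something a black-box computation cannot do. The only caveats you already flag correctly: the change of generators is unimodular (so $\det T$ is unchanged), and the degenerate case $\det T=0$ is vacuous. This would be a worthwhile replacement for the paper's one-line appeal to software.
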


\begin{theorem}[final bound] \label{th:finitep}  If $(A,\lambda)$ has decomposable reduction at a prime $\frakp$  over $p$ then $p \leq d/4$ and $p$ is inert or ramified in $\Oc$.
\end{theorem}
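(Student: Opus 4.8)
The plan is to upgrade the inequality $p<4d$ obtained from the intermediary bound to the sharp $p\le d/4$ by controlling $\det(T)$ modulo $16$, and to dispose of the claim that $p$ is inert or ramified simply by recalling the argument already made in Section~\ref{sec:grossLG}. Indeed, if $\frakp$ were split in $\Oc$ then $\bar E$ would be ordinary, $\bar\LG$ would be isometric to $\LG$, and the indecomposability of $\lambda$ together with Theorem~\ref{th:kaniq} would forbid a vector of norm $1$ in $\bar\LG$; hence a prime of decomposable reduction is necessarily inert or ramified, and only the numerical bound remains.

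For the bound, I would start from Proposition~\ref{prop:cond1}: assuming decomposable reduction at $\frakp$, there is $(x,y,z)\in\Z^3$ making the matrix $T$ of \eqref{eq:Tmatrix} positive definite with $p\mid\det(T)$. Writing $\ell_1,\ell_2,\ell_3$ for the basis of $\LK$ used to form $\Gram(\LK)$ and $\ell_0\in\bar{L}^{Kani}$ for the class with $q(\ell_0)=1$ produced in that proof (and lifting everything to honest symmetric endomorphisms), one has $T=\Gram(\langle\ell_1,\ell_2,\ell_3,\ell_0\rangle,q)$. By Lemma~\ref{lem:detL} the upper-left block has determinant $4d$, so Lemma~\ref{lem:Qmat} writes $\det(T)=4d-Q(x,y,z)$ with $Q$ the positive definite form attached to $\ad(\Gram(\LK))$; since $T>0$ this gives $0<\det(T)\le 4d$. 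Because $q(\ell_0)=1\equiv 1\pmod 4$, Lemma~\ref{lem:Qcong} applied to $(\bar A,\bar\lambda)$ over $\overline{k_{\frakp}}$ yields $16\mid\det(T)$.

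The endgame is then elementary and splits on the parity of $p$. If $p$ is odd, $\gcd(p,16)=1$, so $16p\mid\det(T)\le 4d$, i.e. $p\le d/4$. If $p=2$, divisibility by $16$ alone only gives $\det(T)\ge 16$, but this is enough: $\lambda$ is indecomposable, and $E^2$ admits an indecomposable principal polarization only for $d\ge 8$ (Section~\ref{sec:pol}), whence $p=2\le d/4$. In all cases $p\le d/4$, which together with the first paragraph proves the theorem.

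The only step requiring genuine work is Lemma~\ref{lem:Qcong}, that is, the congruence $\det(T)\equiv 0\pmod{16}$: it feeds on the mod-$4$ relations of Corollary~\ref{cor:bqintegral}, namely $q(\ell_i)\equiv 0,1\pmod 4$ and $b_q(\ell_i,\ell_0)^2\equiv q(\ell_i)q(\ell_0)\pmod 4$, which constrain the entries $x=b_q(\ell_1,\ell_0)$, $y=b_q(\ell_2,\ell_0)$, $z=b_q(\ell_3,\ell_0)$ tightly enough to force a factor $4$ in $\det(T)$ beyond the obvious one. I expect this to be the crux — a finite case analysis or symbolic verification over $\Z/16\Z$ — while everything downstream of it is routine bookkeeping with the lemmas already in hand.
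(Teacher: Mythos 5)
Your proposal is correct and follows essentially the same route as the paper's proof: dispose of the split case via the Section~\ref{sec:grossLG} argument, then combine Proposition~\ref{prop:cond1}, Lemma~\ref{lem:detL}, Lemma~\ref{lem:Qmat}, and the congruence $16\mid\det(T)$ from Lemma~\ref{lem:Qcong} to get $p\le d/4$ for odd $p$, with the $p=2$ case handled by $d\ge 8$. The only cosmetic slip is writing ``if $\frakp$ were split'' where it is the rational prime $p$ that splits in $\Oc$.
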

\begin{proof}
We only have to show that $p \leq d/4$ as we have already observed that $p$ must be inert or ramified on $\Oc$. Using Proposition~\ref{prop:cond1} and Lemma~\ref{lem:Qmat} and the notation there, we have $$p | det(T)=det(S)-Q(m)=4d-Q(m).$$ By Lemma~\ref{lem:Qcong}, if $p\ne 2$, $p | (4d-Q(m))/16 \leq  4d/16=d/4$. As $d \geq 8$, we can include $p=2$ for free.
\end{proof}

\begin{example} \label{ex:sieving}
    Let us resume with our running example with $d=163$. If we simply use the critera of Theorem~\ref{th:finitep}, we get the set
    $$\P=\{2,3,5,7,11,13,17,19,23,29,31,37\}$$
    of possible primes $p$ for which there may exists $\frakp | p$ of decomposable reduction.
    If we run precisely over the matrices $T>0$ in \eqref{eq:Tmatrix} with the extra-condition $\det T \equiv 0 \pmod{16}$, we find 19 matrices $T$ (we actually only consider one of the two equal solutions with $m$ or $-m$) and the determinants of $T$ factorise as 16 times    
    $$2^5,31,3^3,23,2^2 \cdot 5,2^2 \cdot 5,19,2 \cdot 3^2,17,17,13,11,11,7,5,5,3,2,2.$$    
    So the set of primes reduces to $\P=\{2,3,5,7,11,13,17,19,23,31\}$.

    It is actually even more interesting to sieve the matrices $T$ by adding the conditions from Corollary~\ref{cor:bqintegral}~\eqref{cor:cong} expecting an $\ell_0 \in \bar{L}^{Kani}$ with $q(\ell_0)=1$:
    \begin{equation} \label{eq:sieve}
    x^2 \equiv q(\ell_1) \pmod{4}, \, y^2 \equiv q(\ell_2) \pmod{4} \; \textrm{and } z^2 \equiv q(\ell_3) \pmod{4}.    
    \end{equation}
    Doing so, one only finds 8 possible matrices $T$ whose determinants factorise as 16 times
    $$2, 3^3,  2^2 \cdot 5, 7, 11,17,  19, 23.$$
    All the prime factors $2,3,5,7,17,19$ and $23$ in this list are inert in $\Oc$. This is a coincidence: for instance, for $d=83$ and the polarization $P=[2,11,\omega]$, we find that the determinants of the possible $T$ factorise as $16$ times $3 \cdot 5,2 \cdot 7,2^3,5$
    but $7$ is not inert in $\Oc$.
\end{example}

\begin{remark} \label{rem:trueT}
    The links between the set of matrices $T$ above at ramified or inert primes and the set of Gram matrices of $L \oplus \ell_0$ that one could build from true solutions $\ell_0 \in \Ends(\bar{A},\bar{\lambda})$ with $q/4(\ell_0)=1$ are still mysterious, see Section~\ref{sec:observations}. But one can show that the inclusion is strict. For instance, for $d=163$ and  the polarization $P=[3,14,\omega]$, we get two matrices $T$ with determinant divisible by 3 (namely $2^4 \cdot 3^3$ and $2^5 \cdot 3^2)$, but only the first one is equivalent to a true solution matrix.
\end{remark}

\begin{remark}\label{rem:boundg3} The arguments in this section together with Remark \ref{rem:g3} will likely produce a bound for the primes of geometrically bad reduction of genus 3 curves with complex multiplication in the non-primitive setting. This will complement the works in the primitive setting done in \cite{WINE1,WINE1-2,WINE2-2}.
\end{remark}

\section{Primes of potentially decomposable reduction} \label{sec:PDR}
Let $\Oc$ be a maximal order in an imaginary quadratic field $K$ and $P \in M_2(\Oc)$ be a hermitian matrix of determinant $1$ which induces an indecomposable polarization $\lambda=\lambda_0 P$ on the square of any elliptic curve with CM by $\Oc$. Let $A_2$ be the moduli space of principally polarised abelian surfaces.

Instead of looking at a particular elliptic curve  and prime $\frakp$ over $p$, it is more natural to look at all Galois conjugates at the same time, i.e. all elliptic curves with complex multiplication by $\Oc$ or all primes $\frakp$ over $p$. This motivates the following definition.
\begin{definition}
    We say that a prime $p$ is a prime of \emph{potential decomposable reduction} (PDR) if there exists a prime $\frakp$ over $p$ in the Hilbert class field $H$ of $K$ and an elliptic curve $E/H$ with CM by $\Oc$  such that $(E^2,\lambda)$ has geometrically decomposable reduction at $\frakp$.
\end{definition}
\begin{remark}
    We need to add ``geometrically'' to include the possible cases $p=d/4$ which appears, for instance, when $d=8$. Indeed, only for such cases, the reduction of $(E^2,\lambda)$ is not decomposable over $k_{\frakp}=\F_p$ but possibly over $\F_{p^2}$.
\end{remark}

Since the elliptic curves $E$ with CM by $\Oc$ form a Galois orbit under $\Gal(H/K)$, it is redundant to go trough $\frakp$ and $E$ simultaneously. In this section, we therefore fix a prime $\frakp$  over each $p$. All the results will be independent of the choice of these $\frakp$.

Since $P \in M_2(K)$, the action of $\Gal(H/K)$ does not affect the polarization $\lambda_0 P$ and we can consider the $K$-cycle given by the Galois orbit of $(E^2,\lambda_0 P)$.  Actually, as it was shown in the proof of \cite[Prop.4.1]{gelin},  $(E^2,\lambda_0 \bar{P})$ is isomorphic to $(E^2,\lambda_0 P)$, so by taking the orbit under $\Gal(H/\Q)$, we get a $\Q$-cycle where each $(E^2,\lambda_0 P)$ is counted twice. We can  therefore consider the $\Q$-cycle
$$\textrm{CM}(\Oc,P):=\bigsqcup_{\sigma \in \Gal(H/K)} ((E^{\sigma})^2,\lambda_0 P) \subset A_2.$$
Notice that this cycle is still not necessarily reduced: an extreme case is the one studied in \cite{gelin} where all $ ((E^{\sigma})^2,\lambda_0 P)$ are isomorphic, hence the support of the cycle is a single point in the moduli space.

\subsection{Arithmetic intersection} 

The formulation above brings us closer to an arithmetic intersection problem. Let $\Ac_2$ be the moduli stack of principally polarised abelian schemes of relative dimension $2$. Let $\Gc_1$ be the locus in $\Ac_2$ of decomposable principally polarised abelian schemes. 

\begin{definition}
    Let $p$ be a prime  and $\frakp | p$ a prime ideal in $H$. Consider the $\Z_p$-cycle of $\Ac_2$
    $$\mathcal{CM}(\Oc,P,p) := \bigsqcup (\Ec_{\sigma}^2,\lambda_0 P)$$
    where, for $\sigma \in \Gal(H/K)$, $\Ec_{\sigma}$ is the smooth model of $E^{\sigma}$  over the integer ring $\Z_{\frakp}$ of $H_{\frakp}$.    
\end{definition}

Since our polarization $\lambda_0 P$ is indecomposble, this intersection is proper and we can define an arithmetic intersection multiplicity.
\begin{proposition}
    A prime $p$ is a prime of PDR if and only if the arithmetic intersection multiplicity $e_{\Oc,P,p}:=(\Gc_1 . \mathcal{CM}(\Oc,P,p)) \ne 0$.
\end{proposition}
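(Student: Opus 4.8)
The plan is to unwind the definitions and reduce the statement to the content of Theorem~\ref{th:kaniq}, applied fiberwise. Recall that $e_{\Oc,P,p} = (\Gc_1 \cdot \mathcal{CM}(\Oc,P,p))$ is a sum of local intersection multiplicities, one for each point of $\mathcal{CM}(\Oc,P,p)$ lying on $\Gc_1$, and each such local multiplicity is a strictly positive integer (lengths of local artinian rings). Hence $e_{\Oc,P,p} \neq 0$ if and only if the support of $\mathcal{CM}(\Oc,P,p)$ meets $\Gc_1$, i.e.\ if and only if there is some $\sigma \in \Gal(H/K)$ for which the abelian scheme $(\Ec_\sigma^2, \lambda_0 P)$ over $\Z_{\frakp}$ has a point lying on $\Gc_1$. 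Since the generic fiber $((E^\sigma)^2, \lambda_0 P)$ is indecomposable (this is the standing hypothesis on $P$, and by the discussion at the start of Section~\ref{sec:algoprime} geometric indecomposability is equivalent to indecomposability here), the only way $(\Ec_\sigma^2,\lambda_0 P)$ can meet $\Gc_1$ is through its special fiber. So $e_{\Oc,P,p} \neq 0$ if and only if there exists $\sigma \in \Gal(H/K)$ such that the special fiber $(\bar{E}^\sigma{}^2, \bar\lambda)$ is decomposable over $k_{\frakp}$ (or over $\bar k_{\frakp}$, which is the same by Theorem~\ref{th:kaniq} since decomposability is detected by the existence of $\ell_0 \in \bar\LG$ with $q/4(\ell_0)=1$ and $\bar\LG$ is already defined over $k_{\frakp}$).

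Next I would match this last condition with the definition of PDR. Unwinding: $p$ is a prime of PDR precisely when there is a prime $\frakq$ over $p$ in $H$ and an elliptic curve $E'/H$ with CM by $\Oc$ such that $(E'^2, \lambda_0 P)$ has geometrically decomposable reduction at $\frakq$. But the curves with CM by $\Oc$ form a single $\Gal(H/K)$-orbit, so $E'$ is some $E^\sigma$; and $\Gal(H/K)$ acts transitively on the primes of $H$ above $p$ when $p$ is inert (and analogously, via the stabilizer, in the ramified cases of Lemma~\ref{lem:decomp}), so having decomposable reduction at \emph{some} $\frakq \mid p$ with \emph{some} CM curve is equivalent to having it at our \emph{fixed} $\frakp$ for \emph{some} Galois conjugate $E^\sigma$ — this is exactly the ``redundancy'' remark already made in the text. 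Therefore the PDR condition is literally ``$\exists \sigma : (\bar E^\sigma{}^2,\bar\lambda)$ is geometrically decomposable,'' which is what the previous paragraph produced. This gives the equivalence.

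The one point that needs a little care — and is the place I would spend the most words — is the transitivity/independence-of-$\frakp$ step: one must check that whether the special fiber at $\frakp$ becomes decomposable for some Galois conjugate really does not depend on the choice of $\frakp \mid p$. For $p$ inert this is immediate because $\Gal(H/K)$ permutes the $\frakp \mid p$ transitively and carries $(\Ec_\sigma,\frakp)$ to $(\Ec_{\tau\sigma},\tau\frakp)$ compatibly with reduction. In the ramified cases, $\Gal(H/K)$ no longer acts transitively on all primes of $H$ above $p$, but it does act transitively on the set of pairs (prime above $p$ in $H$, CM curve) up to the decomposition group, which is all that is used; alternatively one observes directly that $\bar E^\sigma$ depends, up to isomorphism over $\bar{\F}_p$, only on the fact that it is supersingular, so the lattice $\bar\LG$ and the form $q/4$ on it — hence the existence of a norm-$1$ vector — is insensitive to the choice of $\frakp$. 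I would cite the discussion preceding the proposition for this and keep the argument short, since the substantive input (the norm-$1$ criterion) is Theorem~\ref{th:kaniq}, already available. Finally, the ``proper intersection'' clause, needed so that $e_{\Oc,P,p}$ is well-defined as a number, follows because $\mathcal{CM}(\Oc,P,p)$ is a finite set of closed points (dimension $0$) and $\Gc_1$ is a proper closed substack of $\Ac_2$ not containing the generic fiber of any component of $\mathcal{CM}(\Oc,P,p)$, so the intersection is supported in finitely many closed points and each contributes a finite positive length.
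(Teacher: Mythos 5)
Your proof is correct and follows what the paper leaves implicit: the proposition is stated without proof there, being treated as a definitional unwinding, and your argument supplies exactly the expected reasoning — intersection numbers are sums of strictly positive lengths, nonvanishing is equivalent to nonempty support, nonempty support is equivalent (via Theorem~\ref{th:kaniq}) to some special fiber being geometrically decomposable, and the independence of the fixed $\frakp$ is the Galois-transitivity point the paper already announces. One minor caveat in your closing paragraph: the ``alternative'' observation that $\bar{E}^\sigma$ is determined up to isomorphism over $\bar{\F}_p$ by being supersingular is false as stated — there are in general several non-isomorphic supersingular curves over $\bar{\F}_p$, with non-conjugate endomorphism orders, so the lattice $\bar{\LG}$ genuinely depends on which supersingular curve arises, not merely on supersingularity. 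The correct statement, which your primary argument already uses, is that the $\Gal(H/\Q)$-action (together with the compatibility of reduction with conjugation and the fact that $P$ is fixed by $\Gal(H/K)$ while $P$ and $\bar{P}$ give isomorphic polarized surfaces) matches the data at $\frakp$ with the data at any other $\frakp' \mid p$; you may safely drop the alternative remark.
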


\subsection{Support of the intersection} \label{sec:support}
Let $p$ be a prime of PDR. The points $x$ in the support of the intersection of $\Gc_1$ with  $\mathcal{CM}(\Oc,P,p)$ correspond to $(\bar{E}^2,\bar{\lambda})$ where 
\begin{enumerate}
    \item  $\bar{E}$ is a supersingular elliptic curve which is the reduction  of the smooth model of an elliptic curve $E$ with CM by $\Oc$ modulo $\frakp$;
    \item the polarization $\bar{\lambda}$ is the reduction of the polarization $\lambda$;
    \item The polarization $\bar{\lambda}$ decomposes over $\F_{p^2}$. Note that asking $\F_{p^2}$ and not $k_{\frakp}$ allows to integrate the special cases $p=d/4$.
    \end{enumerate}
As we have seen in Proposition~\ref{prop:uniquel0}, this set is in bijection with the $\pm \ell_0 \in \bar{\LG}$ such that $q/4(\ell_0)=1$. The lattice $\bar{\LG}$ depends only on the embedding of $P$ in $M_2(\End(\bar{E}))$ which in turn is induced by the natural embedding $\iota: \Oc=\End(E) \hookrightarrow \Rc=\End(\bar{E})$ up to the automorphisms of $\bar{E}$, i.e. the elements of $\Rc^{\times}$. We therefore need to be able to go through these embeddings. We will proceed as follows.

For $p$ inert or ramified in $\Oc$, let us consider the reduction map modulo $\frakp$ $$\rho:\,\bigcup_{\sigma \in \Gal(H/K)} E^{\sigma} \rightarrow \{\textrm{supersingular elliptic curves over} \; \F_{p^2}\}.$$ We are therefore interested in going through the elements of
$$\textrm{SS}_{\Oc}(\rho) =\bigcup_{\sigma \in \Gal(H/K)} \{(\rho(E^{\sigma}),\iota) : \iota:\Oc=\End(E^{\sigma})\hookrightarrow \End(\rho(E))\}_{/\simeq},$$
where the pairs $(\bar{E},\iota)$ and $(\bar{E}',\iota')$ are equivalent if there exists an isomorphism $\phi:\,\bar{E}\rightarrow \bar{E}'$ such that $\iota'=\phi\iota\phi^{-1}$. 
In order to do so, let us introduce the bigger set $$\textrm{SS}_{\Oc}(p)=\{(\bar{E},\iota):\,\bar{E}/{\F}_{p^2}\text{ supersingular},\, \iota:\Oc\hookrightarrow \End(\bar{E})\}_{/\simeq}.$$ 
We denote $\textrm{SS}_{\Oc}(\bar{\rho}):=\{(\bar{E},\sigma \circ \iota) :\,(\bar{E},\iota)\in \textrm{SS}_{\Oc}(\rho)\}$ where $\sigma$ is the Frobenius automorphism on $\bar{E}$. 
We finally let $\textrm{Emb}_{\Oc}(p):=\cup_{\Rc \subset \Bc_{p,\infty}} \{\iota: \Oc \hookrightarrow \Rc\}_{/\Rc^{\times}}$ be the set of embeddings of $\Oc$ in all (conjugacy classes of) maximal orders $\Rc$ of $\Bc_{p,\infty}$ up to conjugation by the units of $\Rc$. This is the set which can easily be implemented without any reference to the elliptic curves. We  have the following diagram:

\begin{center}
\begin{tikzcd}
 \textrm{SS}_{\Oc}(\rho) \arrow[rd,hookrightarrow]
  &   &  \textrm{Emb}_{\Oc}(p)
    \\
  & \textrm{SS}_{\Oc}(p) \arrow[ur,twoheadrightarrow,"\pi"] &     
\end{tikzcd}
\end{center}
where $\pi$ forgets about the elliptic curve $\bar{E}$.
In \cite[Section 5.3.3]{ColoPhD}, it is proved the following:
\begin{itemize}
\item if $p$ is inert then $\textrm{SS}_{\Oc}(p)=\textrm{SS}_{\Oc}(\rho)\sqcup \textrm{SS}_{\Oc}(\bar{\rho})$; 
\item if $p$ is ramified then $\textrm{SS}_{\Oc}(p)=\textrm{SS}_{\Oc}(\rho)=\textrm{SS}_{\Oc}(\bar{\rho})$.
\end{itemize}
This can  be seen as a consequence of \cite[Prop.~2.7]{grosszagier} after the theory of Serre and Tate on deformation theory of $p$-divisible groups and Lubin-Tate groups.

On the other hand, by \cite[Lem.42.4.1]{QAVoight}, we know that the map $\pi$ is surjective with one or two preimages, the second case happening if and only if $j(\bar{E}) \in \F_{p^2} \setminus \F_p$, if and only if the unique two-side ideal $\Pc$ of $\Rc$ above $p$ is not principal. This gives the following recipe to run over the elements of $\textrm{SS}_{\Oc}(\rho)$.
\begin{lemma} \label{lem:emb}
    The embeddings $\iota$ of $\textrm{SS}_{\Oc}(\rho)$ are
    \begin{itemize}
    \item if $p$ is inert and $\Pc$ is principal (resp not principal): half the elements (resp. all the elements) of $\textrm{Emb}_{\Oc}(p)$;
    \item if $p$ is ramified and $\Pc$ is principal (resp. not principal): the elements (resp. twice the elements) of $\textrm{Emb}_{\Oc}(p)$.
    \end{itemize}
\end{lemma}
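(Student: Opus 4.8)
The plan is to assemble the statement from the three ingredients quoted just before it: the description of $\textrm{SS}_{\Oc}(p)$ in terms of $\textrm{SS}_{\Oc}(\rho)$ and $\textrm{SS}_{\Oc}(\bar\rho)$ from \cite[Section 5.3.3]{ColoPhD}, the surjectivity and fibre count of the forgetful map $\pi:\textrm{SS}_{\Oc}(p)\twoheadrightarrow\textrm{Emb}_{\Oc}(p)$ from \cite[Lem.42.4.1]{QAVoight}, and the identification of the two preimages with the two orientations induced by the non-trivial two-sided ideal $\Pc$. First I would fix a conjugacy class of maximal order $\Rc\subset\Bc_{p,\infty}$ and an embedding class $\iota\in\textrm{Emb}_{\Oc}(p)$, and count the preimages of $\iota$ inside $\textrm{SS}_{\Oc}(\rho)$ by intersecting the fibre $\pi^{-1}(\iota)\subset\textrm{SS}_{\Oc}(p)$ with the subset $\textrm{SS}_{\Oc}(\rho)$.

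The first step is the case split coming from $\pi$: if $j(\bar E)\in\F_p$ then $\pi^{-1}(\iota)$ is a single point; if $j(\bar E)\in\F_{p^2}\setminus\F_p$ then $\pi^{-1}(\iota)$ has exactly two points, namely $(\bar E,\iota)$ and $(\bar E^{(p)},\iota^{(p)})$ where the superscript denotes the image under the $p$-power Frobenius, and by \loccit\ this second case is exactly when $\Pc$ is not principal. The key point here is that applying Frobenius to a pair sends $\textrm{SS}_{\Oc}(\rho)$ to $\textrm{SS}_{\Oc}(\bar\rho)$ by the very definition of $\textrm{SS}_{\Oc}(\bar\rho)=\{(\bar E,\sigma\circ\iota):(\bar E,\iota)\in\textrm{SS}_{\Oc}(\rho)\}$; one has to check that the two points of the fibre are genuinely swapped by this operation and are distinct precisely when $\Pc$ is not principal, which is where the orientation language of \cite{colo} enters.

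The second step is to feed in the relation between $\textrm{SS}_{\Oc}(p)$ and $\textrm{SS}_{\Oc}(\rho)$. When $p$ is inert, $\textrm{SS}_{\Oc}(p)=\textrm{SS}_{\Oc}(\rho)\sqcup\textrm{SS}_{\Oc}(\bar\rho)$ is a genuine disjoint union of two sets of equal size, and $\textrm{SS}_{\Oc}(\rho)$ meets each Frobenius-orbit in exactly one point: hence if $\Pc$ is principal the fibre of $\pi$ is a single point lying in one of the two halves, so $\textrm{SS}_{\Oc}(\rho)$ accounts for exactly half of $\textrm{Emb}_{\Oc}(p)$; if $\Pc$ is not principal the two-element fibre has one point in $\textrm{SS}_{\Oc}(\rho)$ and one in $\textrm{SS}_{\Oc}(\bar\rho)$, so every embedding class is hit once, i.e.\ all of $\textrm{Emb}_{\Oc}(p)$. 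When $p$ is ramified, $\textrm{SS}_{\Oc}(p)=\textrm{SS}_{\Oc}(\rho)=\textrm{SS}_{\Oc}(\bar\rho)$, so the whole fibre of $\pi$ lies in $\textrm{SS}_{\Oc}(\rho)$: if $\Pc$ is principal this is one point per embedding class, hence exactly $\textrm{Emb}_{\Oc}(p)$, and if $\Pc$ is not principal it is two points per embedding class, hence twice $\textrm{Emb}_{\Oc}(p)$. Combining the four cases gives the statement.

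The main obstacle I expect is the careful bookkeeping in the second step of the case split: one must verify that the involution ``apply $p$-power Frobenius'' really does interchange $\textrm{SS}_{\Oc}(\rho)$ and $\textrm{SS}_{\Oc}(\bar\rho)$ at the level of pairs $(\bar E,\iota)$ (not merely the underlying curves), that it is fixed-point-free exactly when $\Pc$ is not principal, and that this matches the two-orientation picture of \cite{colo}; once this compatibility is pinned down, the counting is purely combinatorial and the phrases ``half'', ``all'', ``the elements'', ``twice the elements'' in the statement follow mechanically. I would also double-check the edge behaviour at $p=d/4$, but since that case forces $k_{\frakp}=\F_p$ and is handled by working over $\F_{p^2}$ throughout $\textrm{SS}_{\Oc}$, it does not disturb the argument.
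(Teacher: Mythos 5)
Your proposal is correct and follows essentially the same route as the paper: the lemma is deduced by combining Col\`o's decomposition of $\textrm{SS}_{\Oc}(p)$ into $\textrm{SS}_{\Oc}(\rho)$ and $\textrm{SS}_{\Oc}(\bar\rho)$ with Voight's $1$-or-$2$ fibre count for the forgetful map $\pi$, after which the four-way case split is purely combinatorial. You correctly isolate the one point that is not completely mechanical — that Frobenius conjugation interchanges $\textrm{SS}_{\Oc}(\rho)$ and $\textrm{SS}_{\Oc}(\bar\rho)$ and swaps the two points of a two-element fibre of $\pi$, so that in the inert case each $\pi$-fibre meets $\textrm{SS}_{\Oc}(\rho)$ in exactly one point; the paper treats this as immediate from the definition of $\textrm{SS}_{\Oc}(\bar\rho)$ and from Voight's identification of the two preimages as $\bar E$ and $\bar E^{(p)}$, which is exactly the compatibility you propose to pin down.
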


In the inert case, Lemma~\ref{lem:emb} does not seem sufficiently precise to run our algorithm as we would need to identify which half is the right one. However, since $p$ is inert, $k_{\frakp}=\F_{p^2}$ and since $\Pc$ is principal, $\bar{E}$ is defined over $\F_p$. Hence, the Frobenius automorphism $\sigma$ is the Galois involution of $\F_{p^2}/\F_p$ which is induced by the complex conjugation on $K$. Now, as already mentioned $P$ and $\bar{P}$ are equivalent hence the corresponding lattices $\bar{L}$ are isometric and the Gram matrices we get using an embedding $\iota$ or $\sigma \circ \iota$ are therefore equivalent. Thus, we can run through  all embeddings and  multiple or divide by $2$ at the end of the counting. To summarise, in terms of the embeddings $\textrm{Emb}_{\Oc}(p)$, we can check if a prime $p$ is a prime of PDR by 
\begin{enumerate}
    \item Running over all conjugacy classes of maximal orders $\Rc$ in $\Bc_{p,\infty}$;
    \item for each of them, compute the embeddings $\iota: \Oc \hookrightarrow \Rc$, up to conjugation by $\Rc^{\times}$;
    \item determine the images of $P \in M_2(\Rc)$ by the $\iota$'s;
    \item for such an $\Rc$ and $P$, compute the lattice $\bar{\LG}$ and its Gram matrix with respect to  $q/4$ as in Section~\ref{sec:grossLG};
    \item Return that $p$ is a prime of PDR if and only if at least one of these lattices has a vector $\ell_0$ of norm $1$.
\end{enumerate}

\subsection{Computation of the intersection multiplicity} \label{sec:exp}
We use the framework of \cite{kudla} with some words of caution. Their work is  more general and slightly distinct. We deal here with the case $B=M_2(\Q)$ (with their notation). Moreover, they do not consider the lattice $L$  but the over-lattice $L_0$ of special endomorphisms as explained in \cite[Rem.2.2.2]{shankarred}. They also exclude the prime $p=2$, although this seems like a technical condition only used when computing the Gross-Keating invariants (see below). Nevertheless, the devissage of the local multiplicity of intersection worked out in \cite[p.733]{kudla} is identical. In practice, we have seen that each unique $\pm \ell_0 \in \bar{\LG}$ such that $(q/4)(\ell_0)=1$ corresponds to a point $x \in \Gc_1 \cap \mathcal{CM}(\Oc,P,p)$. As explained in \cite[p.733]{kudla}, the length $e_{\ell_0}$ of the local Artin rings $\Oc_{\Gc_1\cap \mathcal{CM}(\Oc,P,p),x}$ can be computed as follows. Let  $Q_{\ell_0}$ be the $3 \times 3$ Gram matrix of the lattice ${\ell_0}^{\perp} \subset \langle \ell_1,\ell_2,\ell_3,\ell_0 \rangle$. One then computes the Gross-Keating invariants $(a_1,a_2,a_3)$ of $Q_{\ell_0}$. When $p \ne 2$, this can be done by computing the Jordan  decomposition of $Q_{\ell_0}$ over $\Z_p$ using \texttt{Magma}. One then get 
\begin{align} \label{eq:GK}
e_{\ell_0} & =  \sum_{i=0}^{a_1-1} (i+1) (a_1+a_2+a_3-3i) p^i + \sum_{i=a_1}^{(a_1+a_2-2)/2} (a_1+1)(2a_1+a_2+a_3-4i) p^i  \nonumber\\
&  +\frac{1}{2}(a_1+1)(a_3-a_2+1) p^{(a_1+a_2)/2} \; \; \textrm{if } a_1+a_2 \; \textrm{is even}, \\
e_{\ell_0} & = \sum_{i=0}^{a_1-1} (i+1) (a_1+a_2+a_3-3i) p^i + \sum_{i=a_1}^{(a_1+a_2-1)/2} (a_1+1)(2a_1+a_2+a_3-4i) p^i \; \; \textrm{if } a_1+a_2 \; \textrm{is odd}. \nonumber
\end{align}
When $p=2$, the computation of Gross-Keating invariants is more involved (see for instance \cite{bouwargos}). A full implementation exists in \texttt{Mathematica} \cite{leeGK}. We did a partial implementation in \texttt{Magma} of the cases which do occur for the ternary quadratic form $Q_{\ell_0}$ for all cases $d<200$. We do not know if this is enough for any
$d$: if not, the present  implementation 
returns an error. We plan to have a full translation of the \textit{loc. cit.} algorithm in the future. 
Now, following \cite[Chapter 2]{KRY} and \cite[Section 2]{yang2010}:
$$e_{\Oc,P,p} =(\Gc_1 . \mathcal{CM}(\Oc,P,p)) =  \sum_{x\in\Gc_1\cap \mathcal{CM}(\Oc,P,p)} \textrm{length}\left( \Oc_{\Gc_1\cap \mathcal{CM}(\Oc,P,p),x}\right)=\sum_{\substack{(\bar{E},\iota) \in \textrm{SS}_{\Oc}(\rho) \\ \pm \ell_0 \in \bar{\LG} \subset \Ends(\bar{E}^2,\lambda_0 \iota(P)) \\ q/4(\ell_0)=1}} e_{\ell_0}.$$
 When running over $\textrm{Emb}_{\Oc}(p)$ instead of $\textrm{SS}_{\Oc}(\rho)$, Lemma~\ref{lem:emb} shows that we also have a factor $1,1/2$ or $2$ which plays a role. Hence the total contribution can be expressed as follows.

\begin{theorem} \label{th:mult}
    One has $$e_{\Oc,P,p}= \sum_{\substack{\iota \in \textrm{Emb}_{\Oc}(p)\\ \pm \ell_0 \in \bar{\LG}, \, q/4(\ell_0)=1}} \eps(\iota) \cdot e_{\ell_0}$$ where for $\iota : \Oc \hookrightarrow \Rc$ and $\Pc \subset \Rc$ the two-sided ideal over $p$, we define $\eps(\iota)$ as
   $$ \begin{cases}
        1/2 & \textrm{if } p \; \textrm{is inert and } \Pc \; \textrm{is principal}; \\
        2 & \textrm{if } p \; \textrm{is ramified and } \Pc \; \textrm{is not principal};\\
        1 & \textrm{otherwise}.
    \end{cases}$$
\end{theorem}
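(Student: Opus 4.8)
The plan is to obtain the formula by reindexing the expression for $e_{\Oc,P,p}$ displayed immediately above the statement, namely
$$e_{\Oc,P,p} = \sum_{\substack{(\bar{E},\iota) \in \textrm{SS}_{\Oc}(\rho) \\ \pm \ell_0 \in \bar{\LG} \subset \Ends(\bar{E}^2,\lambda_0 \iota(P)) \\ q/4(\ell_0)=1}} e_{\ell_0},$$
along the forgetful map $\pi\colon \textrm{SS}_{\Oc}(\rho)\to\textrm{Emb}_{\Oc}(p)$, $(\bar{E},\iota)\mapsto[\iota]$. The first point I would establish is that the inner datum attached to $(\bar{E},\iota)$ — the lattice $\Ends(\bar{E}^2,\lambda_0\iota(P))=\{\alpha\in M_2(\Rc)\colon {}^t\bar\alpha\,\iota(P)=\iota(P)\,\alpha\}$, its sublattice $\bar{\LG}$, the quadratic form $q/4$, the (at most one, by Proposition~\ref{prop:uniquel0}) pair $\pm\ell_0$ with $q/4(\ell_0)=1$, and the length $e_{\ell_0}$ read off from the Gross--Keating invariants of $Q_{\ell_0}$ — is, as the constructions of Section~\ref{sec:grossLG} make plain, a purely algebraic function of the maximal order $\Rc=\End(\bar{E})$ and of the Hermitian matrix $\iota(P)\in M_2(\Rc)$, unchanged if $\iota(P)$ is replaced by an $\Rc^{\times}$-conjugate. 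Hence this datum, and in particular $e_{\ell_0}$, depends on $(\bar{E},\iota)$ only through $[\iota]\in\textrm{Emb}_{\Oc}(p)$: the summand of the displayed formula is constant on the fibres of $\pi$.

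The second point, needed only in the inert case, is that $(\bar{E},\sigma\circ\iota)$ carries the same summand as $(\bar{E},\iota)$, where $\sigma$ is the Frobenius automorphism on $\bar{E}$. Indeed in the relevant situation $\sigma$ acts on $\iota(\Oc)$ as the complex conjugation of $\Oc$ (as recalled in Section~\ref{sec:support}), so $(\sigma\circ\iota)(P)=\iota(\bar{P})$, and $(\bar{E}^2,\lambda_0\iota(P))\simeq(\bar{E}^2,\lambda_0\iota(\bar{P}))$ by \cite[Prop.~4.1]{gelin}; consequently the two lattices $\bar{\LG}$ are isometric and the Gross--Keating data coincide.

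With these two invariances in hand the identity reduces to the combinatorics of $\pi$ recorded in Lemma~\ref{lem:emb}, together with the description of $\textrm{SS}_{\Oc}(p)$ from Section~\ref{sec:support}, treated in four cases. When $p$ is ramified with $\Pc$ principal, $\textrm{SS}_{\Oc}(\rho)\to\textrm{Emb}_{\Oc}(p)$ is a bijection, so the displayed sum is already $\sum_{[\iota],\,\pm\ell_0}e_{\ell_0}$ and $\eps(\iota)=1$; likewise when $p$ is inert with $\Pc$ not principal, where the map is again a bijection. When $p$ is ramified with $\Pc$ not principal, $\textrm{SS}_{\Oc}(\rho)$ covers $\textrm{Emb}_{\Oc}(p)$ with every class hit exactly twice, and since the two preimages of each $[\iota]$ contribute equally by the first invariance, the displayed sum equals $2\sum_{[\iota],\,\pm\ell_0}e_{\ell_0}$, so $\eps(\iota)=2$. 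Finally, when $p$ is inert with $\Pc$ principal, $\pi$ identifies $\textrm{SS}_{\Oc}(p)=\textrm{SS}_{\Oc}(\rho)\sqcup\textrm{SS}_{\Oc}(\bar{\rho})$ bijectively with $\textrm{Emb}_{\Oc}(p)$, splitting the latter into the image of $\textrm{SS}_{\Oc}(\rho)$ and the image of $\textrm{SS}_{\Oc}(\bar{\rho})=\{(\bar{E},\sigma\circ\iota)\}$; by the second invariance the summand agrees on the two halves, whence $\sum_{[\iota]\in\textrm{Emb}_{\Oc}(p)}(\text{summand})=2\sum_{(\bar{E},\iota)\in\textrm{SS}_{\Oc}(\rho)}(\text{summand})$ and $\eps(\iota)=1/2$. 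Collecting the four cases gives the asserted formula. Everything here is bookkeeping except the two invariance claims; of these, the algebraic-invariance statement from Section~\ref{sec:grossLG} is the one deserving care, as it is precisely what makes the whole package $(\bar{\LG},\pm\ell_0,e_{\ell_0})$ descend from $\textrm{SS}_{\Oc}(\rho)$ to $\textrm{Emb}_{\Oc}(p)$.
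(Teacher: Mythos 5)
Your argument is correct and follows the same route as the paper: reindex the sum over $\textrm{SS}_{\Oc}(\rho)$ along the forgetful map $\pi$ to $\textrm{Emb}_{\Oc}(p)$ and read the multiplicities off Lemma~\ref{lem:emb}. The paper's own proof is terse — it states the displayed sum, then says ``Lemma~\ref{lem:emb} shows that we also have a factor $1,1/2$ or $2$ which plays a role'' — and only discusses the $\sigma\circ\iota$-invariance for the inert-principal case in the preceding paragraph; your explicit articulation of the two invariance claims (the summand is an algebraic function of $(\Rc,\iota(P))$ up to $\Rc^\times$-conjugacy, and it is also invariant under $\iota\mapsto\sigma\circ\iota$ via $P\sim\bar P$) together with the four-case bookkeeping is a useful unpacking of the same argument rather than a different approach.
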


\subsection{An algorithm for the primes of PDR and their arithmetic intersection multiplicity}

We summarise the previous considerations in Algorithm~\ref{algo:PDR} and give some examples.

\begin{algorithm}
    \caption{Primes of potentially decomposable reduction and their multiplicity.}
\label{algo:PDR}
\begin{algorithmic}[1]
    \Require A maximal order $\Oc=\Z[\omega]$ of discriminant $-d<0$ in an imaginary quadratic field and a matrix $P \in M_2(\Oc)$ representing an indecomposable principal polarization on $E^2$ where $E$ is an elliptic curve with CM by $\Oc$.
    \Ensure The set $p$ of primes of potentially decomposable reduction and their intersection multiplicity $e_{\Oc,P,p}$.
    \State Use Theorem~\ref{th:finitep} or sieving with the conditions~\eqref{eq:sieve} to produce a finite list $\mathbb{P}$ of all possible primes of potentially decomposable reduction. Include $p=d/4$ if $d \equiv 0 \pmod{4}$ and $d/4$ is prime.
    \ForAll{$p  \in \mathbb{P}$}
    \ForAll{$(\iota: \Oc \hookrightarrow \Rc) \in \textrm{Emb}_{\Oc}(p)$}   
    \State Compute $\iota(\omega) \in \Rc$. We still denote it $\omega$.
      \State Write a basis of $\Rc$ as $1,\omega,r_1,r_2$.
      \State Compute the image of $P$ by $\iota$  in $M_2(\Rc)$. We still denote it $P$.
      \State Complete the basis $\ell_1,\ell_2,\ell_3$ of $\LG$ from Algorithm~\ref{algo:primereduction} into a basis of $\bar{\LG}$ by adding the images by $\tau$ of 
      $$P^{-1} \begin{pmatrix} 0 & r_1 \\ \bar{r_i} & 0 \end{pmatrix} \; \textrm{and }  
    P^{-1} \begin{pmatrix} 0 & r_2 \\ \bar{r_2} & 0 \end{pmatrix}.$$ 
      \If{ there exists $\ell_0$ (unique up to a sign) in $\bar{\LG}$ with $q/4(\ell_0)=1$}
        \State Compute the Gram matrix $Q_{\ell_0}$ of $\ell_0^{\perp} \subset \langle \ell_1,\ell_2,\ell_3,\ell_0 \rangle$.
        \State Compute the Gross-Keating invariants $(a_1,a_2,a_3)$ of $Q_{\ell_0}$.
        \State Compute the multiplicity $e_{\ell_0}$ using Formula~\eqref{eq:GK}.
        \If{$p$ is inert in $\Oc$}
        \If{the two-side ideal $\Pc$ over $p$ in $\Rc$ is principal}
        \State $e_{\ell_0}:=e_{\ell_0}/2$;
        \EndIf
        \Else
\If{the two-side ideal $\Pc$ over $p$ in $\Rc$ is not  principal}
\State $e_{\ell_0}:=2 e_{\ell_0}$;
        \EndIf
    \EndIf
    \EndIf
    \EndFor
    \State Compute $e_{\Oc,P,p}=\sum e_{\ell_0}$. 
    \EndFor
    \State \Return the list $(p,e_{\Oc,P,p})$ for which $e_{\Oc,P,p}>0$.
\end{algorithmic}
\end{algorithm}

\begin{example} \label{ex:fin163}
    Let us compute the intersection multiplicity for our running example with $d=163$. The possible primes of PDR were $\mathbb{P}=\{2,3,5,7,17,19,23\}$ and we checked in Example~\ref{ex:prime17} that $p=17$ is a prime of PDR as we found $\ell_0$ such that $q/4(\ell_0)=1$. The Gram matrix of $\langle \ell_1,\ell_2,\ell_3,\ell_0\rangle$ for $q/4$ is $$\begin{pmatrix}
         165 & -241 &  317 &   -3 \\
        -241 &  353 & -463 &   5 \\
         317 &-463 & 613&   -5 \\
          -3 &   5 &  -5  &  1
    \end{pmatrix}, \; \textrm{and } Q_{\ell_0}=\begin{pmatrix}
         156 &-226 & 302 \\
     -226 & 328 &-438 \\
       302& -438&  588
    \end{pmatrix}.$$ 
The matrix $Q_{\ell_0}$ has Gross-Keating invariants $(0,0,1)$ which corresponds to $e_{\ell_0}=1$. The second embedding of $\Oc$ in $\Rc$ is the conjugated one which gives the same value and by Theorem~\ref{th:mult}, we get $e_{\Oc,P,p}=(1+1)/2=1$ since $p$ is inert and $\Pc$ is principal.
\end{example}

\begin{example} \label{ex:79}
Let us take $d=79$. The ring $\Oc$ has class number 5 and there are 3 indecomposable principal polarizations $P_1=[3,7,\omega],P_2=[5,16,2\omega-1]$ and $P_3=[8,12,2\omega+3]$ on $E^2$. Let us pick the inert prime $p=7$. There is a unique maximal order $\Rc$, the ideal $\Pc$ is principal and there are 10 embeddings $\iota: \Oc \hookrightarrow \Rc$ up to $\Rc^{\times}$. But there are respectively $6,0$ or $4$ possible $\pm \ell_0$ for $P_1$, resp. $P_2$, resp. $P_3$, each with Gross-Keating invariants $(0,0,1)$. Hence for $i=1,2$ and $3$, $e_{\Oc,P_i,p}=3,0$ and $2$.
\end{example}

\begin{example}
Let $d=55$. There are two indecomposable polarizations $P_1=[3,5,\omega]$ and $P_2=[5,12,2\omega+1]$. Only for $P_1$, the prime $p=5$ is a prime of PDR. There is only one $\Rc \subset \Bc_{p,\infty}$ with 2 possible embeddings up to $\Rc^{\times}$, each leading to an element $\pm \ell_0$ of norm $1$. Their $Q_{\ell_0}$ matrices are equal, namely
$$Q_{\ell_0} = \begin{pmatrix}
  4 &-20 &  2 \\
-20 &112 &-10 \\
  2 &-10 &  4
\end{pmatrix}$$
with Gross-Keating invariants $(0,1,1)$ contributing each with a multiplicity $2$. Hence we get $e_{\Oc,P_1,p}=2+2=4$ since $\Pc$ is principal.
\end{example}

\section{Factorization of the discriminant of genus $2$ curves}
\label{sec:disc}

In \cite[Thm. 1]{igusa79}, Igusa computes generators for the ring  of Siegel modular forms for $\Sp_4(\Z)$ with integral Fourier coefficients. More precisely, let 
$\Ac_{2}$ be the moduli stack of principally polarised abelian schemes of relative dimension $2$. Let $\pi : \Vc_{2} \longrightarrow \Ac_{2}$ be the universal abelian scheme and $\pi_{*} \Omega^{1}_{\Vc_{2}/\Ac_{2}} \longrightarrow \Ac_{2}$ the rank $g$ bundle, usually called \emph{Hodge bundle}, induced by the relative regular differential forms of degree one on $\Vc_{2}$ over $\Ac_{2}$. The relative canonical bundle over $\Ac_{2}$ is the line bundle
$$
\wm =\bigwedge^{2} \pi_* \Omega^{1}_{\Vc_{2} / \Ac_{2}}.
$$
Igusa computes 15 generators for 
$$
S_{2} = \bigoplus_{h \in \N} \Gamma(\Ac_{2} \otimes \Z,\wm^{\otimes h}),
$$
the space of (geometric) Siegel modular forms over $\Z$.
The generators are expressed as polynomials in the algebraic theta constants defined by Mumford \cite[Appendix I]{mumford-tata3}. As explained in \cite[Sec.3]{ionicamodular}, by pulling back these sections to the space of Teichm\"uller forms for the moduli stack of curves $\mathcal{M}_2$, one can give expressions of the modular forms in terms of Igusa invariants $\underline{J}=(J_2,J_4,J_6,J_8,J_{10})$. After the correct choice of basis of differentials, we can make this map an equality, as Igusa does in his formula \cite[p. 848]{igusag3} and as we will do as well to keep the notation simple. We start with four modular forms in $S_2$ 
$\underline{\varphi}=(\varphi_4, \varphi_6, \varphi_{10}, \varphi_{12})$: $\varphi_m$ for $m=4,6$ are the Einsenstein modular forms $\psi_m$ in  \cite[p.189]{igusa-thetag2};  $\varphi_{10}=-2^2\chi_{10}$ and $\varphi_{12}=2^2\cdot3\chi_{12}$ where $\chi_m$ are the normalised cusp forms in p. 195 in \textit{loc.~cit.}. They all are polynomials in the theta constants with coefficients in $\Z[1/2]$ (some theta constants can be divided by $1/2$ and still have integral Fourier coefficients, see \cite[p.415]{ichi3}). Instead of the locus of Jacobians, one can also choose to restrict to the products of elliptic curves, as described in \cite[p. 168]{igusa79}. 
All this allows Igusa to give a $\operatorname{Spec}(\mathbb{Z})$-scheme isomorphisms between the coarse moduli space of curves $M_2$ and an open sub-scheme of an affine scheme in $A_2$ \cite[Thm. 2]{igusag2}, and similarly for products of elliptic curves in \cite[Lemma 15]{igusa79}. We unify these two loci in this section. For simplicity in the presentation, we first present the case away from characteristic $2$ or $3$.

\begin{proposition}\label{invA2}  Let $\mathbb{Z}[\frac{1}{2},\frac{1}{3}]
[\underline{\varphi}]$
with graduation given by the index of the variables, and let $X=\operatorname{Proj} \mathbb{Z}[\frac{1}{2},\frac{1}{3}][\underline{\varphi}]$.
There is an isomorphism between the coarse moduli space of principally polarised abelian surfaces $A_2$ and the  open set $D(\varphi_{10})\cup D(\varphi_{12})$ of $X$. 
\end{proposition}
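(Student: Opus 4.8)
The plan is to build the isomorphism out of the four Siegel modular forms $\underline{\varphi}=(\varphi_4,\varphi_6,\varphi_{10},\varphi_{12})$ and then to check that it is an isomorphism over each of the two standard affine charts $D(\varphi_{10})$ and $D(\varphi_{12})$ that cover $D(\varphi_{10})\cup D(\varphi_{12})$, using Igusa's description of the Jacobian locus \cite[Thm.~2]{igusag2}, his description of the locus of products of elliptic curves \cite[Lem.~15]{igusa79}, and the comparison formulas relating $\underline{\varphi}$ to the Igusa invariants $\Jv=(J_2,J_4,J_6,J_8,J_{10})$. Write $R=\Z[\frac{1}{2},\frac{1}{3}]$.

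\textbf{Construction of the morphism.} The sections $\varphi_i\in\Gamma(\Ac_2\otimes\Z,\wm^{\otimes i})$ define a morphism from the stack $\Ac_2\otimes R$ to $X=\Proj R[\underline{\varphi}]$ as soon as they have no common zero, and since $X$ is a scheme this factors through the coarse space, giving $\Phi\colon A_2\to X$ (everything over $R$). For base-point-freeness it suffices that $\varphi_{10}$ and $\varphi_{12}$ do not vanish simultaneously on $A_2$. Away from $2$ and $3$ one has $A_2=M_2\sqcup\Sym^2 A_1$, the open Jacobian locus together with the closed locus of products of elliptic curves. On $M_2$ the form $\varphi_{10}$ equals, up to a unit of $R$, the discriminant $J_{10}$ of the corresponding genus-$2$ curve, hence is nonzero, and it vanishes to order exactly one along $\Sym^2 A_1$ (the Jacobian acquiring a single node). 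On $\Sym^2 A_1$, pulling back along $A_1\times A_1\to A_2$, $(E_1,E_2)\mapsto E_1\times E_2$, and using additivity of the Hodge bundle under products, $\varphi_{12}$ restricts to $c\cdot(\Delta\boxtimes\Delta)$ with $\Delta$ the weight-$12$ cusp form of $\SL_2$ and $c$ a nonzero rational whose numerator and denominator involve only $2$ and $3$ (an Igusa computation — this is exactly what forces $2$ and $3$ to be inverted), hence an $R$-unit; as $\Delta$ is everywhere nonzero, $\varphi_{12}$ is nonzero on the whole product locus. Thus $\Phi$ is defined on $A_2$ and factors through $D(\varphi_{10})\cup D(\varphi_{12})$.

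\textbf{Isomorphism over the two charts.} Since $\{D(\varphi_{10}),D(\varphi_{12})\}$ covers the target, its preimages cover $A_2$, and a morphism that is an isomorphism over each member of an open cover of the target is an isomorphism; so it suffices to treat the two charts. Over $D(\varphi_{10})$: by the previous paragraph $\Phi^{-1}(D(\varphi_{10}))=\{\varphi_{10}\neq 0\}=M_2$, and $D(\varphi_{10})$ is the spectrum of the degree-$0$ part of $R[\underline{\varphi}][\varphi_{10}^{-1}]$. The comparison formulas relating the $\varphi_j$ and the $J_i$ over the Jacobian locus identify this ring with the degree-$0$ part of $R[\Jv][J_{10}^{-1}]$, and Igusa's theorem \cite[Thm.~2]{igusag2} identifies the latter with the coordinate ring of $M_2\otimes R$; unwinding definitions this is the identification induced by $\Phi$, so $\Phi$ restricts to $M_2\xrightarrow{\sim}D(\varphi_{10})$. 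Over $D(\varphi_{12})$: here $\Phi^{-1}(D(\varphi_{12}))=\{\varphi_{12}\neq 0\}$ contains all of $\Sym^2 A_1$ together with the open part of $M_2$ where $\varphi_{12}\neq 0$ (equivalently $J_2\neq 0$); Igusa's Lemma~15 in \cite{igusa79} identifies $\Sym^2 A_1$ — which by the order-one vanishing of $\varphi_{10}$ is exactly the reduced closed subscheme $V(\varphi_{10})\cap D(\varphi_{12})$ — with the closed part of $D(\varphi_{12})$, while on the complementary open part one has the $M_2$-description just obtained. Both identifications are induced by $\underline{\varphi}$, so they agree on $D(\varphi_{10})\cap D(\varphi_{12})$ and glue to $\Phi^{-1}(D(\varphi_{12}))\xrightarrow{\sim}D(\varphi_{12})$.

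Putting the two charts together, $\Phi$ is an isomorphism $A_2\xrightarrow{\sim}D(\varphi_{10})\cup D(\varphi_{12})$. The delicate point is the integral bookkeeping: that the comparison dictionary between $\underline{\varphi}$ and $\Jv$ is an isomorphism over $R$ and not merely over $\Q$, that the constant $c$ above is an $R$-unit, and that the two Igusa charts glue scheme-theoretically (not just set-theoretically) along $D(\varphi_{10})\cap D(\varphi_{12})$. For this last point one may alternatively avoid the gluing altogether by invoking Igusa's identification of the Satake compactification $A_2^{*}\otimes R$ with $X$: then $A_2=A_2^{*}\setminus\partial$ corresponds to $X$ minus the common zero locus of the two cusp forms $\varphi_{10},\varphi_{12}$ — the construction paragraph shows this locus misses the interior, so it is the boundary — i.e.\ to the open subscheme $D(\varphi_{10})\cup D(\varphi_{12})$.
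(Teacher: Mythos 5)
Your proposal follows essentially the same route as the paper's proof: decompose $A_2$ into the open Jacobian locus (where the dictionary $\varphi_{10}=J_{10}$, $\varphi_{12}=J_2J_{10}$, $\varphi_4=J_2^2-24J_4$, $\varphi_6=\ldots$ applies) and the closed product locus $\Gc_1$ (where $\varphi_{10}=0$ and $\varphi_{12}=2^2\cdot 3\cdot\Delta\Delta'\neq 0$), and then assemble the isomorphism. The chart-by-chart phrasing and the Satake-compactification alternative at the end are tidy organizational devices, but the substance is the same as the paper's — the same Igusa references, the same two explicit dictionaries, and the same (somewhat informal) gluing step across the stratification.

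There is one genuine error worth flagging. You assert that $\varphi_{10}$ vanishes to order exactly one along $\Gc_1$ and that $V(\varphi_{10})\cap D(\varphi_{12})$ is reduced. This is wrong: one has $(\varphi_{10})=2\Gc_1$. This follows from Igusa's expression $\varphi_{10}=-\frac{1}{2^{12}}\prod_{\epsilon\text{ even}}\theta[\epsilon]^2$, where exactly one of the ten even theta constants vanishes (simply) on the product locus, and it occurs squared; so $\varphi_{10}$ vanishes to order two and $V(\varphi_{10})$ is non-reduced, with $\Gc_1$ as its reduction. For Proposition~\ref{invA2} itself the slip is innocuous, since the argument only requires that $\varphi_{10}$ vanishes set-theoretically on $\Gc_1$ (so that $\varphi_{12}$ is forced to be nonzero there) together with Igusa's identification of $\Gc_1$ with $V(\varphi_{10})_{\mathrm{red}}\cap D(\varphi_{12})$. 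But this multiplicity is not a cosmetic detail in the paper: it is exactly the factor of $2$ in the proof of Proposition~\ref{prop:siegel} and in Equation~\eqref{eq:totalmul}, producing the exponent $12=6\cdot 2$ in Theorem~\ref{th:disc}. If you carried the order-one claim forward, the main arithmetic formula would be off by a factor of two, and this would be contradicted by the experiments in Section~\ref{sec:datum}.
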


\begin{proof}
The moduli space $A_2$ decomposes into the union of the locus of Jacobian of genus 2 curves and the products of elliptic curves with the product polarization, which corresponds to the locus $\Gc_1$ introduced previously. Recall that $\varphi_{10}=-2^2\cdot\chi_{10} =- \frac{1}{2^{12}} \prod_{\epsilon \; \textrm{even}} \theta[\epsilon]^2$ where $\theta[\epsilon]$ are the theta constants. This is a primitive Siegel modular form over $\Z$ and its zero locus inside $A_2$ is $2 \cdot \Gc_1$.

Hence, if $\varphi_{10}\neq0$, we work over the locus of Jacobians and following \cite[p. 848]{igusag3}, we have the relations:
\begin{align}
\varphi_4=J_2^2-2^3\cdot3\cdot J_4,\, \varphi_6=J_2^3-2^2\cdot3^2\cdot J_2J_4+2^3\cdot3^3\cdot J_6,\, \nonumber\\ \varphi_{10}= J_{10}\text{ and } \varphi_{12}=J_2J_{10}. \label{psiJ}
\end{align}
They realise an isomorphism between  the principal open  $D(\varphi_{10})$ of $X$ and the  coarse moduli space of smooth genus 2 curves ${M}_2$.

 If $\varphi_{10}=0$, we consider the geometric point $E\times E'$ where $E$ (resp. $E'$) has invariants $(c_4:c_6:\Delta)$ (resp. $(c'_4:c'_6:\Delta')$) as in \cite[p. 42]{silverman}. The product $E \times E'$ is uniquely determined by $(c_4c'_4:c_6c'_6:\Delta\Delta')$ since $j\cdot j'=\frac{(c_4c'_4)^3}{\Delta\Delta'}$ and $j+j'=1728+\frac{(c_4c'_4)^3-(c_6c'_6)^2}{1728\Delta\Delta'}$.
 The 10 even theta constants at $E \times E'$ are products of the theta constants for each elliptic curve, see for instance \cite[Chap.1, Thm. 11]{rauch}. Hence, an easy computation using the expressions of the Siegel modular forms in terms of theta constants in  \cite[p. 848]{igusag3} gives \begin{equation}\label{psiE}\varphi_4=c_4c'_4,\, \varphi_6=c_6c'_6,\, \varphi_{10}=0\text{ and }\varphi_{12}=2^2\cdot3\cdot\Delta\Delta'\neq0.
 \end{equation}
These formula realise an explicit isomorphism between $\Gc_1$ and the locus of polarised product of elliptic curves. The last 2 sets of equations allow to extend the isomorphism on the full $A_2$.
\end{proof}

For the case of characteristics $2$ and $3$, we also need to include the following Siegel modular forms with, despite a first impression, have integral Fourier coefficients: 
\begin{align*}
\varphi_{24} & =2^{-3}\cdot3^{-1}\cdot(\varphi_{12}^2-\varphi_{10}^2\varphi_4),\\
\varphi_{36} & = 2^{-3}\cdot3^{-3}\cdot(\varphi_6\varphi_{10}^3-\varphi_{12}^3+2^2\cdot3^2\cdot\varphi_{12}\varphi_{24}),   \\
\varphi_{48} & =2^{-2}\cdot(\varphi_{12}\varphi_{36}-\varphi_{24}^2).
\end{align*}

\begin{proposition}\label{prop:23}
Let $\mathbb{Z}[\underline{\varphi}']=\mathbb{Z}[\varphi_4,\varphi_6,\varphi_{10},\varphi_{12},\varphi_{24},\varphi_{36},\varphi_{48}]/(R_{24}, R_{36}, R_{48})$ with graduation given by the index of the variables and the relations being given by:
\begin{align}
R_{24}=2^3\cdot3\cdot\varphi_{24}-\varphi_{12}^2+\varphi_{10}^2\varphi_4, \nonumber\\
R_{36}=2^3\cdot3^3\cdot\varphi_{36}-\varphi_6\varphi_{10}^3+\varphi_{12}^3-2^2\cdot3^2\cdot\varphi_{12}\varphi_{24},\,R_{48}=2^2\cdot\varphi_{48}-\varphi_{12}\varphi_{36}-\varphi_{24}^2.
\end{align}
 Set  $X'=\operatorname{Proj} \mathbb{Z}[\underline{\varphi'}]/(R_{24},R_{36},R_{48})$
There is an isomorphism between the coarse moduli space of principally polarised abelian surfaces $A_2$ and the  open set $D(\varphi_{10})\cup D(\varphi_{12})\cup D(\varphi_{24})\cup D(\varphi_{36})\cup D(\varphi_{48})$, of $X'$.
\end{proposition}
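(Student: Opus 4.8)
The plan is to rerun the proof of Proposition~\ref{invA2}, the only genuinely new ingredient being that on the product locus $\Gc_1$ the three extra forms $\varphi_{24},\varphi_{36},\varphi_{48}$ are non-zero even in characteristics $2$ and $3$, where both $\varphi_{10}$ and $\varphi_{12}$ vanish.

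First I would reduce to those two primes. Over $\Z[\tfrac16]$ the relations $R_{24},R_{36},R_{48}$ simply solve for $\varphi_{24},\varphi_{36},\varphi_{48}$ as polynomials in $\varphi_4,\varphi_6,\varphi_{10},\varphi_{12}$, so $X'\otimes\Z[\tfrac16]\cong X\otimes\Z[\tfrac16]$; moreover those polynomials vanish as soon as $\varphi_{10}=\varphi_{12}=0$, whence $D(\varphi_{24})\cup D(\varphi_{36})\cup D(\varphi_{48})\subseteq D(\varphi_{10})\cup D(\varphi_{12})$ there. Thus over $\Z[\tfrac16]$ the open set in the statement coincides with $D(\varphi_{10})\cup D(\varphi_{12})$ and the claim is Proposition~\ref{invA2}. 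It then remains to analyse the fibres at $p=2$ and $p=3$.

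There, as in the proof of Proposition~\ref{invA2}, I would split $A_2$ into the Jacobian locus $M_2$, the non-vanishing locus of $\varphi_{10}$, and the product locus $\Gc_1$, its zero locus. On $M_2$ the relations \eqref{psiJ} hold integrally by \cite[p.~848]{igusag3}, and together with Igusa's integral system of generators \cite[Thm.~1]{igusa79} the seven forms realise $M_2$ as the principal open $D(\varphi_{10})$ of $X'$ — this is \cite[Thm.~2]{igusag2}, the forms $\varphi_{24},\varphi_{36},\varphi_{48}$ being exactly what is needed to express $J_4,J_6,J_8$ integrally once $2$ or $3$ is inverted along with $\varphi_{10}$; I would quote this rather than redo it. The one computation to carry out is on $\Gc_1$: substituting the values \eqref{psiE}, i.e.\ $\varphi_4=c_4c_4'$, $\varphi_6=c_6c_6'$, $\varphi_{10}=0$, $\varphi_{12}=2^2\cdot3\cdot\Delta\Delta'$, into the definitions of $\varphi_{24},\varphi_{36},\varphi_{48}$ yields
\begin{equation*}
\varphi_{24}=2\cdot3\cdot(\Delta\Delta')^{2},\qquad \varphi_{36}=2^{2}\cdot(\Delta\Delta')^{3},\qquad \varphi_{48}=3\cdot(\Delta\Delta')^{4}.
\end{equation*}
Since $\Delta\Delta'$ is invertible on $\Gc_1$ while $2^{2}$ is a unit modulo $3$ and $3$ is a unit modulo $2$, this shows $\Gc_1\subseteq D(\varphi_{36})\cup D(\varphi_{48})$ over all of $\Z$; and on $D(\varphi_{36})$ (resp.\ $D(\varphi_{48})$) the triple $(\varphi_4:\varphi_6:\varphi_{36})$ (resp.\ $(\varphi_4:\varphi_6:\varphi_{48})$) recovers the unordered pair $\{E,E'\}$ through the symmetric functions $jj'$ and $j+j'$ exactly as in the proof of Proposition~\ref{invA2}, so \cite[Lemma~15]{igusa79} identifies $\Gc_1$ with the corresponding open of $X'$. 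Gluing the two loci, and arguing with the boundary as in Proposition~\ref{invA2} (the five forms $\varphi_{10},\varphi_{12},\varphi_{24},\varphi_{36},\varphi_{48}$ are cusp forms, hence vanish on the image of $A_1^{\ast}$ in $X'$, so $\bigcup_i D(\varphi_i)$ is contained in, hence equal to, $A_2$), one obtains $A_2=M_2\sqcup\Gc_1=D(\varphi_{10})\cup D(\varphi_{12})\cup D(\varphi_{24})\cup D(\varphi_{36})\cup D(\varphi_{48})$ as open subschemes of $X'$.

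The step I expect to be the main obstacle is the assertion on $M_2$ at $p=2$ and $p=3$: beyond set-theoretic surjectivity one must know that the seven forms separate points and tangent directions and that $J_4,J_6,J_8$ are genuinely integral polynomials in $\varphi_4,\dots,\varphi_{48}$ after inverting $\varphi_{10}$. I would not reprove this and would rely entirely on \cite{igusa79,igusag2,igusag3}. The new content — the values of $\varphi_{24},\varphi_{36},\varphi_{48}$ on $\Gc_1$ above — is an elementary substitution, and the passage from the homogeneous coordinate $\varphi_{12}$ to $\varphi_{36}$ or $\varphi_{48}$ on $\Gc_1$ is the same weighted-projective bookkeeping already used in Proposition~\ref{invA2}.
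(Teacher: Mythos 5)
Your proposal is correct and takes essentially the same approach as the paper: the paper's own proof simply says it ``follows the same lines as Proposition~\ref{invA2}'' and records the identities $\varphi_{24}=J_4J_{10}^2,\ \varphi_{36}=J_6J_{10}^3,\ \varphi_{48}=J_8J_{10}^4$ on the Jacobian locus and $\varphi_{24}=2\cdot3\,(\Delta\Delta')^2,\ \varphi_{36}=2^2(\Delta\Delta')^3,\ \varphi_{48}=3(\Delta\Delta')^4$ on $\Gc_1$, which are exactly the substitutions you carry out. The only discrepancy worth noting is a sign typo in the statement of $R_{48}$ (it should read $2^2\varphi_{48}-\varphi_{12}\varphi_{36}+\varphi_{24}^2$ to match the definition $\varphi_{48}=2^{-2}(\varphi_{12}\varphi_{36}-\varphi_{24}^2)$ given just above it and the identity $J_8=(J_2J_6-J_4^2)/4$); you correctly used the definition from the text, so your computation of $\varphi_{48}$ on $\Gc_1$ agrees with the paper.
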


\begin{proof}
The proof follows the same lines as the one of Proposition~\ref{invA2}. We only need the new identities:
\begin{itemize}
    \item For Jacobians of genus 2 curves 
\begin{align}
\varphi_4=J_2^2-2^3\cdot3\cdot J_4,\, \varphi_6=J_2^3-2^2\cdot3^2\cdot J_2J_4+2^3\cdot3^3\cdot J_6,\, \nonumber\\ 
\varphi_{10}=J_{10},\,\varphi_{12}=J_2J_{10},\,\varphi_{24}=J_4J_{10}^2,\,\varphi_{36}=J_6J_{10}^3,\,\varphi_{48}=J_8J_{10}^4.
\end{align}
\item For products of elliptic curves
\begin{align}
\varphi_4=c_4c'_4,\, \varphi_6=c_6c'_6,\, \varphi_{10}=0,\,\varphi_{12}=2^2\cdot3\cdot\Delta\Delta',\nonumber \\
\varphi_{24}=2\cdot3\cdot\Delta^2{\Delta'}^2,\,\varphi_{36}=2^2\cdot\Delta^3{\Delta'}^3,\,\varphi_{48}=3\Delta^4{\Delta'}^4.
\end{align}
\end{itemize}
\end{proof}

\begin{remark} In \cite[Thm. 1]{igusa79}, Igusa needs more Siegel modular forms to generate the ring $S_2$ than we use in Proposition \ref{prop:23}. He does indeed describe the Satake compactification of $A_2$, \cite[Sec. 9]{igusa-thetag2},  when
we describe here a strict sublocus since we do not consider points for which $\phi_{10}=\phi_{12}=0$.
\end{remark}

The previous isomorphisms allow us to established a simple relation between the normalised valuations of the Siegel modular forms and of the invariants over a discrete valuation ring $\Z_{\frakp}$ with valuation $v$. We recall that we defined 
the normalised valuation of $J_{10}$ at $\frakp$ by $$v_{\Jv}(J_{10})= v_{\frakp}(J_{10})-10 \cdot \min\left(\frac{v_{\frakp}(J_{n})}{n}, \, n \in \{2,4,6,8,10\}\right).$$
Similarly, for the set $\underline{\varphi'}$ (or $\underline{\varphi}$), we can define
$$v_{\underline{\varphi'}}(\varphi_{10})= v_{\frakp}(\varphi_{10})-10 \cdot \min\left(\frac{v_{\frakp}(\varphi_{n})}{n}, \, n \in \{4,6,10,12,24,36,48\}\right).$$

\begin{proposition}\label{prop:siegel} Let $\Z_{\frakp}$ be a discrete valuation ring with valuation $v$.  Let $x \in A_2/\Z_{\frakp}$ be a point corresponding to the Jacobian of a stable model of a  genus $2$ curve defined over $\Z_{\frakp}$ with decomposable reduction.  Then, 
$$
v_{\Jv}(J_{10}(C))=6v_{\underline{\varphi'}}(\varphi_{10}(x)).
$$
\end{proposition}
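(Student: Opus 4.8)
The plan is to express both normalized valuations in terms of a single auxiliary quantity --- the valuation of $J_{10}$ computed against one convenient trivialization of the Hodge bundle $\wm$ --- and to feed in the comparison formulas established in the proof of Proposition~\ref{prop:23}. First, I would fix a generator $\omega$ of the rank-one $\Z_{\frakp}$-module $x^{*}\wm$ (it exists because $\Z_{\frakp}$ is local). Evaluating against powers of $\omega$ turns each Siegel modular form into an element $\varphi_{n}(x)\in\Z_{\frakp}$ --- the $\varphi_{n}$ are integral over $\Z$ and $x$ extends over $\Z_{\frakp}$ --- and turns the Igusa invariants of $C$ into elements $J_{n}\in H_{\frakp}$ of the generic fibre. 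Both $v_{\Jv}(J_{10})$ and $v_{\underline{\varphi'}}(\varphi_{10})$ are unchanged under rescaling $\omega$ (a weight-$n$ form is multiplied by a fixed power of the scaling factor, which cancels in the normalized valuation), so we may work with this $\omega$. Since $x_{\bar\eta}=\Jac(C)$ lies on the locus of Jacobians, the identities in the proof of Proposition~\ref{prop:23} hold as numerical equalities on the generic fibre; in particular $\varphi_{10}=J_{10}$, $\varphi_{12}=J_{2}J_{10}$, $\varphi_{24}=J_{4}J_{10}^{2}$, $\varphi_{36}=J_{6}J_{10}^{3}$ and $\varphi_{48}=J_{8}J_{10}^{4}$.

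Next I would show that $\min_{n}v_{\frakp}(\varphi_{n}(x))/n=0$, so that $v_{\underline{\varphi'}}(\varphi_{10})=v_{\frakp}(\varphi_{10}(x))=v_{\frakp}(J_{10})$. The bound $v_{\frakp}(\varphi_{n}(x))/n\ge 0$ is integrality. For the reverse, reduce $x$ to the special fibre $x_{s}=(\bar E_{1}\times\bar E_{2},\lambda_{1}\times\lambda_{2})\in\Gc_{1}$, where $\bar E_{1},\bar E_{2}$ are honest elliptic curves, so their discriminants $\Delta,\Delta'$ are units. The values on $\Gc_{1}$ computed in the proof of Proposition~\ref{prop:23}, namely $\varphi_{12}=2^{2}\cdot3\,\Delta\Delta'$, $\varphi_{24}=2\cdot3\,\Delta^{2}{\Delta'}^{2}$, $\varphi_{36}=2^{2}\,\Delta^{3}{\Delta'}^{3}$, $\varphi_{48}=3\,\Delta^{4}{\Delta'}^{4}$ (up to the unit comparing the specialization of $\omega$ with the exterior product of N\'eron differentials at $x_{s}$), then show that $\varphi_{12}(x_{s})\ne 0$ when $p\ge 5$, that $\varphi_{36}(x_{s})\ne 0$ when $p=3$, and that $\varphi_{48}(x_{s})\ne 0$ when $p=2$. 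In every case some $\varphi_{n_{0}}(x)$ with $n_{0}\in\{12,36,48\}$ is a unit, i.e. $v_{\frakp}(\varphi_{n_{0}}(x))=0$, which forces the minimum to be $0$.

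Then I would compute $\mu:=\min_{n}v_{\frakp}(J_{n})/n$. From $v_{\frakp}(\varphi_{n}(x))\ge 0$ and the identities above one gets $v_{\frakp}(J_{2})\ge-v_{\frakp}(J_{10})$, $v_{\frakp}(J_{4})\ge-2v_{\frakp}(J_{10})$, $v_{\frakp}(J_{6})\ge-3v_{\frakp}(J_{10})$, $v_{\frakp}(J_{8})\ge-4v_{\frakp}(J_{10})$, hence $v_{\frakp}(J_{n})/n\ge-v_{\frakp}(J_{10})/2$ for $n=2,4,6,8$; and $v_{\frakp}(J_{10})/10\ge-v_{\frakp}(J_{10})/2$ since $v_{\frakp}(J_{10})\ge0$. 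So $\mu\ge-v_{\frakp}(J_{10})/2$. Conversely, the unit $\varphi_{n_{0}}$ found above makes one of $v_{\frakp}(J_{2})/2$, $v_{\frakp}(J_{6})/6$, $v_{\frakp}(J_{8})/8$ equal to $-v_{\frakp}(J_{10})/2$, whence $\mu\le-v_{\frakp}(J_{10})/2$ and therefore $\mu=-v_{\frakp}(J_{10})/2$. Plugging in,
\[
v_{\Jv}(J_{10})=v_{\frakp}(J_{10})-10\mu=v_{\frakp}(J_{10})+5\,v_{\frakp}(J_{10})=6\,v_{\frakp}(J_{10})=6\,v_{\underline{\varphi'}}(\varphi_{10}).
\]

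The main obstacle is the bookkeeping in the first two steps: one must make Igusa's comparison isomorphisms, both on the locus of Jacobians and on $\Gc_{1}$, numerically meaningful with respect to one and the same trivialization of $\wm$, and verify that the specialization of $\omega$ at $x_{s}$ differs from the exterior product of N\'eron differentials only by a unit, so that the values $\varphi_{n}(x_{s})$ really have the asserted valuations. The primes $p=2,3$ must be handled separately, which is precisely why the extra forms $\varphi_{24},\varphi_{36},\varphi_{48}$ of Proposition~\ref{prop:23} are needed; once the trivialization is pinned down, everything else is elementary valuation arithmetic.
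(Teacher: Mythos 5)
Your proof is correct and follows essentially the same route as the paper's: both rely on the comparison formulas of Proposition~\ref{prop:23} to pass between the $\varphi_n$ and the $J_n$, and the final valuation arithmetic is identical. The only differences are presentational. The paper keeps the minimum $v=\min_n v_\frakp(\varphi_n)/n$ as a free parameter and writes $v_\frakp(\varphi_{10})=10v+w$, deducing $v_{\underline{\varphi'}}(\varphi_{10})=w$ and then $v_{\Jv}(J_{10})=6w$; you instead normalize $v=0$ by fixing a local trivialization of the Hodge bundle and then explicitly exhibit a unit among $\varphi_{12},\varphi_{36},\varphi_{48}$ on the decomposable special fibre, which is what makes $v=0$ achievable. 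Your argument also carries out the residue characteristics $2$ and $3$ in detail (choosing $\varphi_{48}$ and $\varphi_{36}$ respectively), whereas the paper's proof only treats $p\geq 5$ and remarks that the small-characteristic cases are ``simply more cumbersome.'' The point you flag at the end --- that the specialization of the chosen generator of $x^*\wm$ differs from the product of N\'eron differentials only by a unit, and that this is why the values $\varphi_n(x_s)$ from Proposition~\ref{prop:23} have the asserted valuations --- is indeed the one piece of bookkeeping that needs care, and you have identified and handled it correctly.
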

\begin{proof} We do the proof when the residue field is of characteristic different from 2 or 3, the proof is simply more cumbersome for the latter cases as one needs to consider the extra Siegel modular forms in Proposition~\ref{prop:23}. By \eqref{psiE}, since the special fiber is the product of two elliptic curves, the geometric situation is described by 
$$
v(\varphi_4(x))\geq4v,\,v(\varphi_6(x))\geq6v,\,v(\varphi_{10}(x))=10v+w,\,v(\varphi_{12}(x))\geq12v,
$$
with at least one of the inequalities being an equality. So, we get $v_{\underline{\varphi}'}(\varphi_{10}(x))=w$.
Reversing the equalities \eqref{psiJ} we obtain:
\begin{align}
J_2=\varphi_{12}/\varphi_{10},\,J_4=2^{-3}\cdot3^{-1}(\varphi_{12}^2/\varphi_{10}^2-\varphi_4), \nonumber \\
J_6=2^{-3}\cdot3^{-3}(\varphi_6+2^{-1}\cdot3^2\varphi_4\varphi_{12}/\varphi_{10}-2^{-1}\varphi_{12}^3/\varphi_{10}^3),\,J_{10}=\varphi_{10}. \label{Jpsi}
\end{align}
Hence,
$$
v(J_2(C))\geq2v-w,\,v(J_4(C))\geq4v-2w,\,v(J_6(C))\geq 6v-3w\text{ and }v(J_{10}(C))=10v+w,
$$
with at least one of the inequalities being an equality.
So, we conclude that $$v_{\Jv}(J_{10}(C))=10v+w-5(2v-w)=6w$$ and the result follows.

\end{proof}

It remains to connect these normalised valuations with the local intersection multiplicity in Section~\ref{sec:exp}. For sake of simplicity, we explain here again the case of characteristic of the residue field different from 2 and 3. Proposition~\ref{invA2} shows that locally around $x \in \Gc_1$, we can take  $\varphi_{12}=1$ and the points we are interested in have local ring $$\Oc_{\varphi_{10}\cap \mathcal{CM}(\Oc,P,p),x} = \Z_{\frak{p}}[\varphi_4,\varphi_6,\varphi_{10}]/(\varphi_4-x_1,\varphi_6-x_2,\varphi_{10}-x_3, \varphi_{10}) = \Z_{\frakp}/\frakp^{v(x_3)}\Z_{\frakp}.$$
 Now $v(x_3)$ is the normalised valuation $v_{\underline{\varphi}}(\varphi_{10}(x))$ and since $2 \cdot \Gc_1=(\varphi_{10})$, we see that $v_{\underline{\varphi}}(\varphi_{10}(x))$ is twice the length of the local ring $\Oc_{\Gc_1 \cap \mathcal{CM}(\Oc,P,p),x}$. Hence
 \begin{equation}\label{eq:totalmul}
 v_{\Jv}(J_{10}(C)) = 6 \cdot v_{\underline{\varphi}}(\phi_{10}(x)) = 12 \cdot \textrm{length}(\Oc_{\Gc_1 \cap \mathcal{CM}(\Oc,P,p),x})    
 \end{equation}
We therefore conclude with our final result.

\begin{theorem} \label{th:disc}
Let $\Oc$ be a maximal order in a quadratic imaginary field $K$ of discriminant $-d<0$. Let $E$ be an elliptic curve with CM by $\Oc$ defined over the Hilbert class field $H$ of $K$. Let $(E^2,\lambda_0 P)$ be a principally polarised abelian surface corresponding to the Jacobian of a curve $C$ over $H$. Let $p$ be a prime of potential decomposable reduction. If $p$ is inert in $\Oc$ or $p=d/4$ (resp. $p$ is ramified and different from $d/4$), then the exponent at $p$, $\sum_{\frakp | p} v_{\Jv}(J_{10}(C))$, of the absolute normalised discriminant is $12  \cdot e_{\Oc,P,p}$ (resp. $6  \cdot e_{\Oc,P,p}$). 
\end{theorem}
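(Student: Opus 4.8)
The plan is to reduce the theorem to the local identity \eqref{eq:totalmul} together with two bookkeeping facts: the additivity of the arithmetic intersection over the components of $\mathcal{CM}(\Oc,P,p)$, and the way $\Gal(H/K)$ permutes the primes of $H$ above $p$ (which is essentially Lemma~\ref{lem:decomp}).

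First I would fix a prime $\frakp_0\mid p$ of $H$. Since $\lambda_0P$ is indecomposable the intersection is proper; writing $\mathcal{CM}(\Oc,P,p)=\bigsqcup_{\sigma\in\Gal(H/K)}(\Ec_\sigma^2,\lambda_0P)$ as a disjoint union of $\Z_{\frakp_0}$-sections, additivity of the arithmetic intersection gives $e_{\Oc,P,p}=\sum_{\sigma}m_\sigma$, where $m_\sigma$ is the length of the local Artin ring of $\Gc_1\cap(\Ec_\sigma^2,\lambda_0P)$ at the closed point of its special fibre over $\frakp_0$. That section is precisely the $\Z_{\frakp_0}$-point of $\Ac_2$ attached to the stable model of the Galois conjugate curve $C^\sigma$ (whose Jacobian is $((E^\sigma)^2,\lambda_0P)$, which has good reduction since $E^\sigma$ does), so \eqref{eq:totalmul} applies and yields $m_\sigma=\tfrac1{12}\,v_{\Jv}(J_{10}(C^\sigma))$, the normalised valuation being taken at $\frakp_0$; when the reduction of $C^\sigma$ at $\frakp_0$ is smooth both sides vanish, so the equality holds unconditionally. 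Hence $e_{\Oc,P,p}=\tfrac1{12}\sum_{\sigma\in\Gal(H/K)}v_{\Jv,\frakp_0}(J_{10}(C^\sigma))$.

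Next I would unwind the Galois action. The Igusa invariants are intrinsic, so $J_n(C^\sigma)=\sigma(J_n(C))$, and since $v_{\frakp_0}(\sigma(a))=v_{\sigma^{-1}\frakp_0}(a)$ for $a\in H$ we obtain $v_{\Jv,\frakp_0}(J_{10}(C^\sigma))=v_{\Jv,\sigma^{-1}\frakp_0}(J_{10}(C))$. Because $p$ is inert or ramified in $\Oc$ (Theorem~\ref{th:finitep}) there is a unique prime $\frakP$ of $K$ above $p$, so $\Gal(H/K)$ acts transitively on the set of primes of $H$ above $p$, and each such prime equals $\sigma^{-1}\frakp_0$ for exactly $f$ values of $\sigma$, where $f$ is the order of the decomposition group of $\frakp_0$ in $\Gal(H/K)$. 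Regrouping the sum by these primes gives $\sum_{\sigma}v_{\Jv,\frakp_0}(J_{10}(C^\sigma))=f\cdot\sum_{\frakp\mid p}v_{\Jv}(J_{10}(C))$, hence $\sum_{\frakp\mid p}v_{\Jv}(J_{10}(C))=\tfrac{12}{f}\,e_{\Oc,P,p}$. Finally, as $H/K$ is unramified, $f$ equals the residue degree of $\frakp_0$ over $K$, i.e.\ the order of the ideal class $[\frakP]$ in $\Cl(\Oc)$, which Lemma~\ref{lem:decomp} computes: $f=1$ when $p$ is inert or $p=d/4$ (in both cases $\frakP$ is principal), and $f=2$ when $p$ is ramified with $p\ne d/4$ (then $\frakP^2=(p)$ is principal but $\frakP$ is not, so genus theory forces order $2$). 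Substituting $f=1$ resp.\ $f=2$ gives $12\,e_{\Oc,P,p}$ in the first two cases and $6\,e_{\Oc,P,p}$ in the last, as claimed.

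I expect the only delicate point to be the identification $m_\sigma=\tfrac1{12}\,v_{\Jv}(J_{10}(C^\sigma))$: one must check that the $\Z_{\frakp_0}$-section cut out by the $\sigma$-th component of the cycle is indeed the one induced by the stable model of $C^\sigma$ — this uses that $((E^\sigma)^2,\lambda_0P)$ has good reduction and is therefore the N\'eron model of $\Jac(C^\sigma)$ — and that the length computation behind \eqref{eq:totalmul} is unchanged by the passage from the moduli stack to the coarse space $A_2$, which is harmless away from the finitely many objects carrying extra automorphisms. The remaining steps are routine once \eqref{eq:totalmul} and Lemma~\ref{lem:decomp} are in hand.
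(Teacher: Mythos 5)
Your argument is correct and follows the same route as the paper: invoke \eqref{eq:totalmul} to convert each local intersection length into $\tfrac1{12}$ of a normalised $J_{10}$-valuation, sum over the Galois conjugates, and then use Lemma~\ref{lem:decomp} to translate that sum into the sum over primes $\frakp\mid p$ — with the factor $f=1$ in the inert and $p=d/4$ cases and $f=2$ in the remaining ramified case. You merely spell out explicitly what the paper leaves terse, namely the identity $v_{\Jv,\frakp_0}(J_{10}(C^\sigma))=v_{\Jv,\sigma^{-1}\frakp_0}(J_{10}(C))$ and the identification of $f$ with the order of the decomposition group (equivalently the order of $[\frakP]$ in $\Cl(\Oc)$); this is a faithful reconstruction, not a different proof.
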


\begin{proof} Using \eqref{eq:totalmul}, we would simply get a factor 12 if running on the Galois conjugates of $C$. But in this section, we decided to fix the curve $C$ and to run through the $\frakp | p$ instead. We therefore need Lemma~\ref{lem:decomp} to see the correspondence between the $\frakp |p$ ad the Galois conjugates. This translates into:
$$
\sum_{\frak{p}\mid p} v_{\Jv}(J_{10}(C)) =\begin{cases}\sum\limits_{\sigma \in \Gal(H/K)}v_{\Jv}(J_{10}(C^{\sigma})), 
 \text{ if }p\text{ is inert in }\mathcal{O},\\
 \sum\limits_{\sigma \in \Gal(H/K) } v_{\Jv}(J_{10}(C^{\sigma}))/2,
 \text{ if }p\text{ is ramified and }p \neq d/4,\\
 \sum\limits_{\sigma \in \Gal(H/K)} v_{\Jv}(J_{10}(C^{\sigma})), 
 \text{ if }p= d/4.\end{cases}
$$
\end{proof}

\begin{example} \label{ex:facdis}
    Let us conclude with our running example for $d=163$. In Example~\ref{ex:fin163}, we got $e_{\Oc,P,17}=1$. Since $17$ is inert, we shall get $12 \cdot 1$ for  the exponent of the absolute normalised discriminant at 17. Now $D=J_{10}(C)$ and since $\frakp=(17)$ is inert in $H=K$, we see that $v(J_2)=v(J_4)=v(J_6)=v(J_8)=0$ so that $v(D)=v_{\Jv}(J_{10}(C))=12$ and the results are coherent. 
    
    For the interesting case of $p=2$, we get 
    \begin{multline*}
        (v(J_2(C)),v(J_4(C)),v(J_6(C)),v(J_8(C)),v(J_{10}(C)))=( 6, 9, 14, 16, 32) \\ = (2+2 \cdot 2, 1+2 \cdot 4, 2 + 2 \cdot 6, 0 + 2 \cdot 8, 12 + 2 \cdot 10)
    \end{multline*} and therefore $v_{\Jv}(J_{10}(C))=12$ without introducing the correction factor $2^{-12}$ in Equation~\eqref{eqD}.
\end{example}

\section{Experimental results, observations and conjectures} \label{sec:datum}
In this section, we first run experiments  for $\Oc$ of class number $1$ (Table~\ref{tab:comp})
and then for $d<100$ (Tables~\ref{tab:comp2} and \ref{tab:comp3}). For each $\Oc$, we pick an elliptic curve $E$ with CM by $\Oc$. For each indecomposable principal polarization $\lambda$ on $E^2$, we compute heuristically  a curve $C$ which Jacobian is isomorphic to $(E^2,\lambda)$, see Section~\ref{sec:explicit}.  We then compute the positive exponents of the absolute normalised discriminant, $\sum_{\frakp | p} v_{\Jv}(J_{10})$ at all primes $p$.  In parallel, we run our Algorithm~\ref{algo:PDR} to compute the primes of PDR for $(E^2,\lambda)$ and their multiplicity of intersection. We then compare these two computations thanks to Theorem~\ref{th:disc} and see that they match in all cases. The tables also offer a comparison with the third author's formula. The implementation in \texttt{Magma} of the various necessary algorithms can be found in the ancillary files attached to the \texttt{Arxiv} version of this article.

\subsection{Explicit construction.} \label{sec:explicit} Giving a maximal order $\Oc=Z[\omega]$ of an imaginary quadratic field $K$ of discriminant $-d$, an elliptic curve $E$ with CM by $\Oc$, we compute all possible indecomposable principal polarizations $\lambda$ on $E^2$ thanks to an update of the package created in \cite{gelin} (see Section~\ref{sec:pol}). For each polarization, represented by a matrix $P \in M_2(\Oc)$, we do the following
\begin{enumerate}
    \item following \cite{Rit-serre}, we compute an approximation of a period matrix for $(E^2,\lambda_0 P)$.
    \item We use the package \texttt{CHIMP} and the reconstruction algorithms  of \cite{sijsling-endo},  to compute a genus $2$ curve $C/H$ whose Jacobian is isomorphic to $(E^2,\lambda_0 P)$ over the Hilbert class field $H$ of $K$. As we use approximation of complex numbers, the result is heuristic. 
    \item We compute the Igusa invariants $\Jv=(J_2,J_4,J_6,J_8,J_{10})$ of $C$.
    \item for each prime $p$ dividing the norms relative to $H/\Q$ of the numerator of $J_{10}$ or of any of the denominators of $J_2,J_4,J_6$  or $J_8$, we compute its decomposition in prime ideals $\frakp$ in $H$ and at each $\frakp$, the multiplicity $e_{\frakp}=v_{\Jv}(J_{10})$ in $H_{\frakp}$.
    \item One returns the pair $(p,\sum_{\frakp} e_{\frakp})$ for which the second term is positive.
\end{enumerate}
We managed to run this algorithm for all $d<100$ with a precision of $300$ bits for the complex field. We could also handle all $d<200$ for which the class number of $K$ was less or equal to $10$. We used the package \texttt{MagmaPolred} from \href{https://github.com/edgarcosta/MagmaPolred}{Edgar Costa's github} which calls PARI-GP to reduce the coefficients of the class polynomials. 

\begin{remark}
In the primitive genus $2$ CM case (where the abelian surface is simple), bounds for the primes of bad reduction and their exponents in the discriminant were obtained  in \cite{goreng2,gola}. They were used in  \cite{streng,strengrecip} to certified the computation of Igusa Class polynomials. Similarly, our bounds may be used to certify the previous computations.
\end{remark}

\subsection{A close formula} In the 90', the third author worked out a close formula to obtain similar results, at least when $d$ is odd. In order to prove and maybe improve this formula in a future work, we show how it compares to the present results. The matrix $Q$ appearing in \eqref{eq:villegas163} is \emph{not} equivalent to the Gram matrix we obtain in Example~\ref{ex:gram163}. It can be cooked up in the following way. Starting with a Riemann matrix $[1,\omega]$ of $E/\C$, one can use \cite{Rit-serre} to compute algebraically a Riemann matrix $\tau=\begin{pmatrix}
        \tau_1 & \tau_2 \\ \tau_2 & \tau_3 
    \end{pmatrix}$ of $(E^2,\lambda)$. One can then derive the \emph{singular relations} from Humbert, see for instance \cite[Chap. 2]{gruenewald}. It gives a rank 3 lattice $L_1$ of $\Z^5=A\Z+B\Z+C\Z+D\Z+E\Z$ defined by the integral solutions of 
    $$A \tau_1 + B \tau_2 + C \tau_3 + D (\tau_2^2- \tau_3 \tau_1) +E=0.$$
    The lattice $\Z^5$ has the natural quadratic form $\Dc(A,B,C,D,E) = B^2-4AC-4DE$ which agrees with Humbert definition. However, this one does not give the correct $Q$ and one has to use the form $B^2-AC-DE$ instead, as in \cite{kudla}. We call $Q_1$ the Gram matrix of $L_1$ with respect to this modified quadratic form.
  We let $Q = -d \cdot Q_1^{-1}$ and we will compute 
\begin{equation} \label{eq:villegageneral}
-6 \sum_{m \in \Z^3} \sum_{n | (d-Q(m))/4} \left(\frac{n}{d}\right) \log n.    
\end{equation}

\begin{remark}
    It seems that there may be another, maybe more natural, interpretation of the matrix $Q_1$ as the Gram matrix of the lattice $L_0=\{\alpha \in \Ends(E^2,\lambda), \, \Tr(\alpha)=0\}$ with respect to our quadratic form $q/4$.
\end{remark}

  \begin{table}[]
{\small 
\begin{tabular}{|c|c|c|c|}
\hline
         $d$           & $P$ & Explicit computation & Formula~\eqref{eq:villegageneral}  \\ \hline
$4 \cdot 2$ &    $ [2,2,\omega+1]$          &   $2^{12}$  &   $-$         \\  \hline
$11$ &   $ [2,2,\omega] $          & $2^{12} $    &  $2^{12}$           \\ \hline
$19$ &   $[2,3,\omega]$            &   $2^{12} 3^{12}$  &     $2^{12} 3^{12}$           \\ \hline
\multirow{2}{*}{43} &   $  [2,6,\omega] $         &  $2^{24} 3^{12} 5^{12}$   &  $2^{24} 3^{12} 5^{12}$           \\  
                    &  $  [3,4,\omega]$           & $ 2^{12} 3^{12} 5^{12} 7^{12} $  &  $ 2^{12} 3^{12} 5^{12} 7^{12} $           \\ \hline
\multirow{3}{*}{67} &     $[2,9,\omega]$          &  $2^{12} 3^{12} 5^{12} 11^{12}$   & $2^{12} 3^{12} 5^{12} 11^{12}$      \\ 
                    &    $ [3,6,\omega]   $       &  $2^{12} 3^{12} 5^{12} 7^{12} 13^{12}  $  &   $2^{12} 3^{12} 5^{12} 7^{12} 13^{12}$    \\ 
                    &      $[4,5,\omega+1]$         &   $2^{24} 3^{12} 5^{12} 7^{12} 11^{12}$ &   $2^{24} 3^{12} 5^{12} 7^{12} 11^{12}$ \\ \hline
\multirow{7}{*}{163} &    $[2,21,\omega]$           & $2^{24}3^{12}5^{12}23^{12}29^{12}$     & $2^{24}3^{12}5^{12}23^{12}29^{12}$       \\  
                    &    $[3,14,\omega]$           &  $2^{12}3^{24}5^{12}7^{12} 11^{12} 17^{12} 31^{24}$   &  $2^{12}3^{24}5^{12}7^{12} 11^{12} 17^{12} 31^{24}$          \\
                    &    $[4,11,\omega+1]$           &  $2^{12} 3^{12} 5^{12} 7^{12} 11^{12} 13^{12} 23^{12} 29^{12}$    & $2^{12} 3^{12} 5^{12} 7^{12} 11^{12} 13^{12} 23^{12} 29^{12}$    \\                  
                    &   $[5,33,2\omega]$            & $2^{24} 3^{12} 5^{12} 7^{12} 13^{12} 17^{12} 19^{12} 37^{12}$    & $2^{24} 3^{12} 5^{12} 7^{12} 13^{12} 17^{12} 19^{12} 37^{12}$    \\                  
                    &    $[6,7,\omega]$           & $2^{12} 3^{24} 5^{12} 7^{12} 11^{12} 17^{12} 19^{12} 23^{12}$    &  $2^{12} 3^{24} 5^{12} 7^{12} 11^{12} 17^{12} 19^{12} 23^{12}$   \\                  
                    &    $[6,8,\omega+2]$           &  $2^{24}3^{12}5^{12}7^{12}11^{12}13^{12}29^{12}31^{12}$   &  $2^{24}3^{12}5^{12}7^{12}11^{12}13^{12}29^{12}31^{12}$   \\ 
                    &    $[7,24,2\omega+1]$           & $2^{36} 3^{12}5^{12}11^{12}13^{12}17^{12}19^{12}23^{12}$    &  $2^{36} 3^{12}5^{12}11^{12}13^{12}17^{12}19^{12}23^{12}$    \\ \hline
\end{tabular}
}
\caption{Comparison of explicit computations with Formula~\eqref{eq:villegageneral}. The indecomposable principal polarizations $P=\begin{pmatrix}
    a & b \\ \bar{b} & c
\end{pmatrix}$ are given by the lists $[a,c,b]$. Class number $1$ cases.}
\label{tab:comp}
\end{table}

\begin{table}[]
\begin{tabular}{|c|c|c|c|}
\hline
       $d$             & $P$ & Explicit computation & Formula~\eqref{eq:villegageneral}  \\ \hline
$4\cdot 5$ &       $[2,3,\omega]$        &  $2^{24}$   &     $-$      \\ \hline
23 &     $[3,3,\omega+1]$          &  $5^{12}$   &  $5^{12}$              \\\hline
$4\cdot 6$ &       $[2,4,\omega+1]$        &  $2^{12}3^{12}$   &   { 1 $*$}     \\ \hline
31 &       $[3,3,\omega]$        &  $3^{36}$    & $3^{36}$             \\ \hline
\multirow{2}{*}{35} &    $[2,5,\omega]$           & $2^{24}5^{12}$    &   { $2^{24}$ $*$}          \\
                    &   $[3,4,\omega+1]$            &  $2^{24}5^{12}$    &   { $2^{24}$ $*$}       \\ \hline
39 &       $[5,8,2\omega+1]$        &  $3^{36}$    & { $1$ $*$}           \\ \hline
\multirow{2}{*}{$4 \cdot 10$} & $[ 2, 6, \omega + 1 ]$ & $2^{12} 3^{24}$ & $-$  \\
                                & $[ 3, 4, \omega + 1 ]$ & $2^{24} 3^{24} 5^{12}$ & $-$   \\ \hline   
\multirow{2}{*}{$47$} &  $[3,5,\omega+1]$            &   $5^{36}$  &   $5^{36}$        \\ 
                  &    $[7,17,3\omega+2]$           &   $5^{12}11^{12}$  &  $5^{12}11^{12}$            \\ 
                   \hline
\multirow{2}{*}{$51$} &  $[2,7,\omega+1]$            &   $2^{36}3^{24}$  &   { $2^{36}$ $*$}        \\ 
                  &    $[4,4,\omega+1]$           &   $2^{36}3^{24}$  & { $2^{36}$ $*$}            \\ 
                   \hline
\multirow{2}{*}{$4 \cdot 13$} &  $[2,7,\omega+]$            &   $2^{24}3^{24}$  &    $-$        \\ 
                  &    $[3,5,\omega+1]$           &   $2^{24}3^{24}5^{24}$  &    $-$       \\ 
                   \hline
\multirow{2}{*}{$55$} &  $[3,5,\omega]$            &   $3^{48}5^{24}$  &  { $2^{-12}3^{48}$ $**$}        \\ 
                  &    $[5,12,2\omega+1]$           &   $3^{48}11^{12}$ &  { $3^{48}$ $*$}            \\ 
                   \hline
\multirow{3}{*}{$4\cdot 14$} &  $[2,8,\omega+1]$             &   $2^{24} 7^{12}$  &   $-$      \\ 
                    &    $[3,5,\omega]$           &   $2^{48}7^{12}$  &      $-$      \\ 
                    &          $[4,4,\omega+1]$     &     $2^{24}7^{24}$ &      $-$   
                    \\
                 \hline
\multirow{4}{*}{$59$} &  $[2,8,\omega]$            &   $2^{48}11^{12}$  &   $2^{48}11^{12}$        \\ 
                  &    $[3,6,\omega+1]$           &   $2^{36}13^{12}$ &  $2^{36} 13^{12} $        \\ 

                  &    $[4,4,\omega]$           &   $2^{36}11^{12}$ &  $2^{36} 11^{12}$          \\ 

                  &    $[5,12,2\omega-1]$           &   $2^{48}$ &  $2^{48}$            \\ 
                   \hline
\multirow{3}{*}{$4\cdot 17$} &  $[2,9,\omega]$             &   $2^{84}$  &   $-$     \\ 
                    &    $[3,6,\omega]$           &   $2^{60}5^{24}$  &  $-$   \\ 
                    &          $[5,14,2\omega+1]$     &     $2^{60}5^{24}$ &    $-$    \\ \hline
\multirow{3}{*}{$71$} &  $[3,7,\omega+1]$             &  $7^{24} 13^{12}$   &   $7^{24} 13^{12}$       \\ 
                    &    $[5,5,\omega+2]$           &   $11^{12}17^{12}$  &  $11^{12} 17^{12}$        \\ 
                    &          $[7,23,3\omega-1]$     &   $7^{36}11^{12}$  &     $7^{36} 11^{12}$      \\ \hline
\multirow{3}{*}{$79$} &  $[3,7,\omega]$             & $3^{60} 7^{36}$    &    $3^{60} 7^{36}$       \\ 
                    &    $[5,16,2\omega-1]$           &  $3^{60}17^{12}$   &   $3^{50} 17^{12}$        \\ 
                    &          $[8,12,2\omega+3]$     &  $3^{60} 7^{24}$   &     $3^{60} 7^{24}$      \\ \hline
\multirow{5}{*}{$83$} &   $[2,11,\omega]$           &   $2^{48}5^{36}$  &   $2^{48}5^{36}$         \\ 
                    &    $[3,8,\omega+1]$           &  $2^{36}5^{12}13^{12}$  &  $2^{36}5^{12} 13^{12}$             \\ 
                    &    $[4,6,\omega+1]$           &   $2^{36}5^{36} 13^{12}$ &  $2^{36}5^{36} 13^{12}$        \\ 
                    
                    &    $[5,17,2\omega]$           &   $2^{48}5^{12}19^{12}$  &  $2^{48}5^{12}19^{12}$            \\

                    &    $[7,12,2\omega-1]$           &   $2^{36}5^{12}$  &  $2^{36}5^{12}$           \\\hline
\end{tabular}
\caption{Comparison of explicit computations with Formula~\eqref{eq:villegageneral}.  The indecomposable principal polarizations $P=\begin{pmatrix}
    a & b \\ \bar{b} & c
\end{pmatrix}$ are given by the lists $[a,c,b]$. Class number $>1$ cases with $d<100$.}
\label{tab:comp2}
\end{table}

\begin{table}[]
\begin{tabular}{|c|c|c|c|}
\hline
       $d$             & $P$ pol. & Explicit computation & Formula~\eqref{eq:villegageneral}  \\ \hline
\multirow{2}{*}{$4 \cdot 21$} &  $[2,11,w]$            &   $2^{48}3^{24}$  &    $-$       \\ 
                  &    $[5,17,2w]$           &   $2^{48}3^{24}$ &     $-$      \\ 
                   \hline
\multirow{2}{*}{$87$} &  $[5,5,w+1]$   & $3^{48} 5^{48}$  &  { $2^{-12}5^{48}$ $**$}       \\ 
                  &    $[7,29,3w+1]$ &   $3^{48} 5^{48}$   &  { $2^{-12}5^{48}$ $**$}  \\ 
                   \hline
\multirow{4}{*}{$4 \cdot 22$} &  $[2,12,w+1]$            &   $2^{12}3^{24}11^{12}$  &      $-$      \\ 
                  &    $[3,8,w+1]$           &   $2^{36}3^{24}5^{24}7^{12}17^{12}$ &    $-$       \\ 

                  &    $[4,6,w+1]$           &   $2^{12}3^{24}5^{24}7^{24}11^{12}$ &     $-$        \\ 

                  &    $[5,18,2w+1]$           &   $2^{36}3^{24}5^{24}7^{12}17^{12}$ &    $-$          \\ 
                   \hline
\multirow{4}{*}{$91$} &  $[2,12,w]$            &  $2^{24}3^{24}17^{12}$  &    { $2^{12}3^{24} 17^{12}$ $**$}       \\ 
                  &    $[3,8,w]$           &   $2^{24}3^{24}13^{12}17^{12}$ & { $2^{24} 3^{24}  17^{12}$ $*$}          \\ 

                  &    $[4,6,w]$           &   $2^{24}3^{24}13^{12}17^{12}$ &  { $2^{24} 3^{24}  17^{12}$ $*$}     \\ 

                  &    $[5,6,w+2]$           &   $2^{24}3^{24}17^{12}$ &  { $2^{12} 3^{24}  17^{12}$ $**$}           \\ 
                   \hline
\multirow{4}{*}{$95$} &  $[3,9,w+1]$             &  $5^{24} 7^{48}$ &   {$2^{-12} 7^{48}$ $**$}        \\ 
                   &    $[5,5,w]$           & $7^{48} 19^{12}$    &   { $7^{48}$ $*$}        \\ 
                    &          $[5,20,2w+1]$     &  $5^{48} 7^{48}$   &    { $2^{-12} 3^{-12} 7^{48}$ $**$}       \\
                    &          $[7,31,3w]$     &   $5^{24} 7^{48}$  &     { $2^{-12} 7^{48}$ $**$}       \\
                    \hline
\end{tabular}
\caption{Comparison of explicit computations with Formula~\eqref{eq:villegageneral}. The indecomposable principal polarizations $P=\begin{pmatrix}
    a & b \\ \bar{b} & c
\end{pmatrix}$ are given by the lists $[a,c,b]$. Class number $>1$ cases with $d<100$ (sequel).}
\label{tab:comp3}
\end{table}

\subsection{Observations} \label{sec:observations}
Formula~\eqref{eq:villegageneral} works amazingly well when $d$ is a prime. When $d$ is an odd composite number, it fails detecting primes that divide $d$ (cases denoted $*$ in the table) which may lead to some side effects that we observe on other primes (cases denoted $**$ in the table).

We also observe in our experiments that the primes output by the refined sieve with the congruences relations \eqref{eq:sieve} and which are inert or ramified in $\Oc$ seems to actually be the set of primes of PDR directly, with the possible exception of having an extra prime dividing $d$. In other words,  we conjecture that if a matrix 
\begin{equation} 
T =\begin{pmatrix} \Gram(\LG) & \begin{array}{c}x \\ y \\ z\end{array}\\\begin{array}{ccc}x & y & z\end{array} & 1\end{pmatrix} >0
\end{equation}
with $(x,y,z) \in \Z^3$ satisfying the congruences \eqref{eq:sieve} has  a determinant divisible by a prime $p \nmid d$ then $p$ is a prime of PDR. For $p | d$, it is less clear as 
we found exceptions for $d <200$. For certain polarizations the pairs  $$(d,p)=\{(40,5),(91,7),(104,13),(120,5),(136,17),(168,7),(184,7),(187,11),(195,13)\}$$
do appear for matrices $T$ but $p$ is not a PDR for the respective polarization.
If the conjecture is true, it means that we can detect a prime of PDR simply in terms of these conditions without doing computations with the embeddings of $\Oc$ in the quaternion orders. However, as observed in Remark~\ref{rem:trueT}, some matrices $T$ do not come from true solutions in $\Ends(\bar{E}^2,\bar{\lambda})$. A possible refinement would be to keep only the matrices $T$ for which the Gross-Keating invariants do exist: this would exclude the second matrix of the Remark~\ref{rem:trueT} since its Gross-Keating invariants are $(0,0,2)$ and this would give a fractional value for $e_{\ell_0}$. However, this condition does not reduce the list of exceptions above.

\bibliographystyle{alpha}

\bibliography{synthbib}

\end{document}